\newcommand{\R}{\mathbb{R}}
\newcommand{\mbX}{\mathbf{X}}
\newcommand{\mbY}{\mathbf{Y}}
\newcommand{\mbf}{\mathbf{f}}
\newcommand{\mbg}{\mathbf{g}}
\newcommand{\mbx}{\mathbf{x}}
\newcommand{\mby}{\mathbf{y}}
\newcommand{\mbW}{\mathbf{W}}
\newcommand{\mbh}{\mathbf{h}}
\newcommand{\mbz}{\mathbf{z}}
\newcommand{\mbu}{\mathbf{u}}
\newcommand{\given}{\:|\:}
\newtheorem{theorem}{Theorem}
\newtheorem{corollary}{Corollary}[theorem]
\newtheorem{lemma}[theorem]{Lemma}
\theoremstyle{definition}
\newtheorem{definition}{Definition}[section] 
\newtheorem{assumption}{Assumption}[section]
\def\@email#1#2{%
 \endgroup
 \patchcmd{\titleblock@produce}
  {\frontmatter@RRAPformat}
  {\frontmatter@RRAPformat{\produce@RRAP{*#1\href{mailto:#2}{#2}}}\frontmatter@RRAPformat}
  {}{}
}%
\begin{document}

\preprint{AIP/123-QED}

\title[Arnold tongues for stochastic oscillators]{$Q$-functions, synchronization, and Arnold tongues for coupled stochastic oscillators}
% Force line breaks with \\
\author{Max Kreider}
\affiliation{Department of
Mathematics, Applied Mathematics, and Statistics, Case Western Reserve University,
Cleveland, OH 44106 USA.}
\email{mbk62@case.edu}

\author{Benjamin Lindner}%
\affiliation{ 
Bernstein Center for Computational Neuroscience Berlin, Berlin
10115, Germany.}%
\affiliation{Department of Physics, Humboldt Universität zu Berlin, Berlin
D-12489, Germany.}

\author{Peter J. Thomas}
\affiliation{Department of
Mathematics, Applied Mathematics, and Statistics, Case Western Reserve University,
Cleveland, OH 44106 USA.}%
\affiliation{Department of Mathematics, Applied Mathematics, and Statistics, Department of Biology, Department of Dataand Computer Science, Department of Electrical, Control and Systems Engineering, Case Western Reserve University,Cleveland, OH 44106 USA.}

\date{\today}% It is always \today, today,
             %  but any date may be explicitly specified

\begin{abstract}
Phase reduction is an effective theoretical and numerical tool for studying synchronization of coupled deterministic oscillators.
\textit{Stochastic} oscillators require new definitions of asymptotic phase.
The $Q$-function, i.e.~the slowest decaying complex mode of the stochastic Koopman operator (SKO), was proposed as a means of phase reduction for stochastic oscillators.  
In this paper, we show that the $Q$-function approach also leads to a novel definition of ``synchronization" for coupled stochastic oscillators.   
A system of coupled oscillators in the synchronous regime may be viewed as a single (higher-dimensional) oscillator.
Therefore, we investigate the relation between the $Q$-functions of the uncoupled oscillators and the higher-dimensional $Q$-function for the coupled system.
We propose a definition of synchronization between coupled stochastic oscillators in terms of the  eigenvalue spectrum of  Kolmogorov's backward operator (the generator of the Markov process, or the SKO) of the higher dimensional coupled system. 
We observe a novel type of bifurcation reflecting (i) the relationship between the leading eigenvalues of the SKO for the coupled system and (ii) qualitative changes in the cross-spectral density of the coupled oscillators.
Using our proposed definition, we observe synchronization domains for  symmetrically-coupled stochastic oscillators that are analogous to Arnold tongues for coupled deterministic oscillators.
\end{abstract}

\maketitle

\begin{quotation}
Synchronization and phase reduction have precise meanings for deterministic limit-cycle oscillators, but \textit{stochastic} oscillators require new definitions.
The slowest decaying eigenmode of the stochastic Koopman operator (SKO), the $Q$-function, is a nonlinear coordinate transformation that provides a universal description for stochastic oscillators.
The leading SKO eigenvalues for \emph{coupled}  oscillators undergo a bifurcation, reflecting a qualitative change in system dynamics.
Here, we present a new definition of ``synchronization'' for stochastic oscillators in terms of this SKO eigenvalue bifurcation.
\end{quotation}

\section{\label{section:introduction} Introduction}

Synchronized oscillations are ubiquitous in natural and engineered systems \cite{Pikovsky_Rosenblum_Kurths_2001}.
Examples include: synchronized neural dynamics underlying perception, cognition, and memory \cite{ermentrout2010mathematical,izhikevich2007dynamical,stiefel2016neurons}, chemical kinetics \cite{kiss2002emerging, taylor2009dynamical},  firefly populations
\cite{sarfati2021self,strogatz2018nonlinear}, locomotion and rhythmic movement \cite{liu2009coupled, santos2017biped, wang2017cpg}, circadian rhythms \cite{aton2005come, gonze2005spontaneous}, cardiac rhythms \cite{ mirollo1990synchronization, dos2004rhythm}, EEG data \cite{burke2004stochastic, herrmann200511, monto2008very}.

In the deterministic case, uncoupled limit-cycle oscillators operate independently, each sustaining a stable limit-cycle solution with an arbitrary phase relationship relative to the others. 
Recall that a stable limit-cycle is a closed, isolated periodic orbit which is the limit set for trajectories in a subset of the domain called the basin of attraction.
Consequently, when considered as a whole, the uncoupled system does not possess a unique limit-cycle solution; the phase ambiguity among the oscillators implies that the high-dimensional periodic orbit is not isolated. 
One way to describe synchronization is as a state in which the joint system behaves as a single high-dimensional oscillator. 
In this synchronized state, the system exhibits a unique attracting limit-cycle solution, where all oscillators have identical periods and maintain a fixed, unambiguous phase relationship.

Phase reduction greatly simplifies the analysis of limit-cycle oscillators by describing their dynamics with a single phase (timing) variable that evolves at a constant rate \cite{brown2004phase, ermentrout2010mathematical, izhikevich2007dynamical, kuramoto1984chemical, nakao2016phase} ({isostable (amplitude) coordinates in directions transverse to the limit cycle, along with phase and isostable response curves, allow for accurate yet simple dynamical descriptions of both weakly and strongly coupled oscillators \cite{castejon2013phase, guillamon2009computational, wilson2016isostable, wilson2018greater, wilson2019phase}.
However, in the present manuscript we will not consider so-called ``augmented phase reduction" or isostable coordinates}).  
Phase is defined away from the limit cycle via isochrons, Poincar\'{e} sections foliating the basin of attraction with the property that the asymptotic timing behavior of trajectories originating on the same isochron are identical \cite{winfree1980geometry}, or equivalently that the time it takes for a trajectory to perform one full oscillation and return to the same isochron is constant \cite{guckenheimer1975isochrons}.
Phase descriptions are especially suited to study synchronization.
Solutions of the equations governing the evolution of the phase difference of the oscillators can accurately reproduce the nonlinear bifurcation associated with the onset of synchronization \cite{izhikevich2007dynamical, park2021high}, allowing one to predict the existence and stability of synchronized solutions, and to identify Arnold tongues \cite{arnold2012geometrical, guckenheimer2013nonlinear, izhikevich2007dynamical, schilder2007computing}, i.e., regions of mode-locking as a function of frequency difference  and coupling strength.

To illustrate these ideas in the deterministic setting, consider a system of coupled oscillators which has already been reduced to a phase description: Kuramoto oscillators of the form
\begin{equation}\label{eq:intro_kuramoto}
    \begin{split}
        x' &= \omega + \tau + \kappa \sin(y-x)
        \\
        y' &= \omega + \kappa \sin(x-y)
    \end{split}
\end{equation}
where $x$ and $y$ are phase-like variables evolving on the torus: $x,y\in [0,2\pi)$.
The oscillators become synchronized when their phase difference is asymptotically constant.
Therefore, we consider the evolution of the phase difference, $\psi = y-x$,
\begin{equation}
    \psi' = \tau -2\kappa\sin(\psi) 
\end{equation}
where $\tau$ is the frequency difference of the oscillators.
It is straightforward to show that when $\kappa > {|\tau|}/{2}$, the oscillators synchronize with a constant phase difference.
Note that the synchronization boundary, $\kappa={|\tau|}/{2}$, is specified in terms of two parameters: coupling strength $\kappa$ and frequency difference (detuning) $\tau$.
Figure \ref{fig:intro} displays the synchronization region or Arnold tongue for equation \eqref{eq:intro_kuramoto}, along with sample trajectories in phase space.

\begin{figure*}[ht]
    \centering
    \includegraphics[scale=.5]{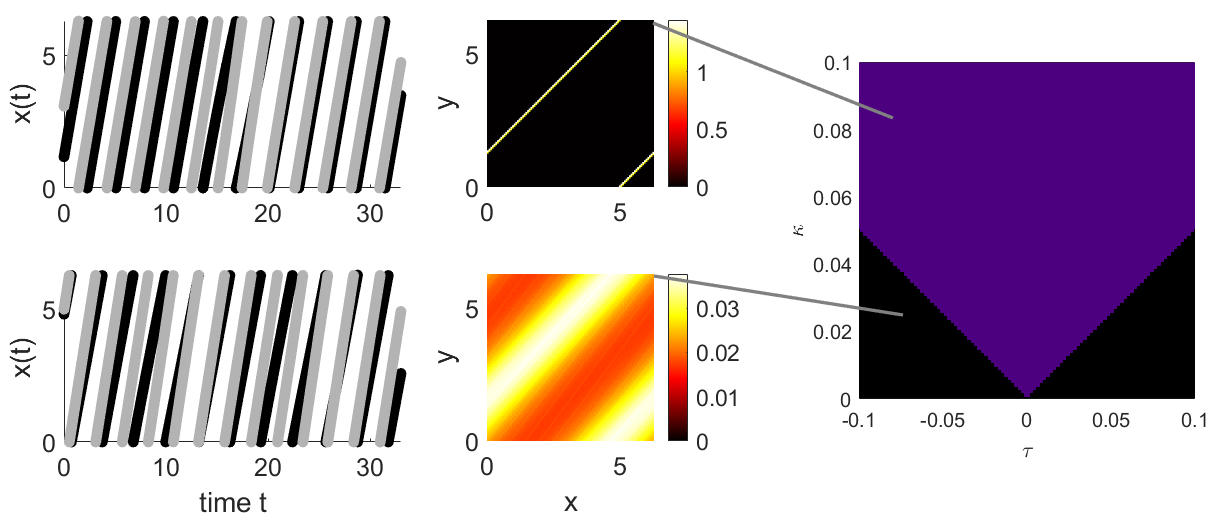}
    \caption{Synchronization behavior for the deterministic Kuramoto system \eqref{eq:intro_kuramoto} with $\omega=2$ and $\tau = 0.5$.
    \textbf{Bottom row:} time series with $\kappa=0.09$ (left) and a histogram of bin counts as a probability distribution (right).
    The system is not synchronized and trajectories (starting from $t=10$) fill the entire torus.
    \textbf{Top row:} time series with $\kappa=0.26$ (left) and a histogram of bins counts (right).
    The system is asymptotically synchronized and trajectories (starting from $t=10$) do not fill the entire torus.
    \textbf{Right column:} Arnold tongue diagram depicting the parameter regime for which the system synchronizes (purple) and for which it does not synchronize (black).}
    \label{fig:intro}
\end{figure*}

The picture changes considerably in the presence of noise.
Noise precludes perfect periodicity; 
in contrast to the deterministic case, stochastic oscillators do not admit periodic limit-cycle solutions.
In turn, deterministic phase is no longer well-defined, and isochrons with ideal asymptotic and return-time properties do not exist for noisy systems.
Consequently, it is natural to ask after the properties of a given system of coupled stochastic oscillators: are there certain characteristics which guarantee that the system behaves ``like a single stochastic oscillator", and which lead to a universal phase-reduction description  analogous to the deterministic case? 

Many efforts have been made to study synchronization  phenomena for stochastic oscillators \cite{amro2015phase, callenbach2002oscillatory,deng2016measuring,ermentrout2009noisy,freund2000analytic,han1999interacting,medvedev2010synchronization,nandi2007effective,neiman1994synchronizationlike,zakharova2011analysing, zakharova2013coherence,zaks2003noise,zhang2008interacting}.
Previous approaches to quantifying synchronization have considered similarity in the power spectra of the oscillations \cite{han1999interacting}, phase diffusion coefficients for the individual oscillators \cite{amro2015phase, callenbach2002oscillatory}, intraclass correlation \cite{deng2016measuring}, degree of coherence \cite{medvedev2010synchronization}, average frequency difference \cite{nandi2007effective}, and effective phase synchronization \cite{callenbach2002oscillatory,han1999interacting}.
These approaches produce continuously graded measures in the sense that, for stronger coupling, the oscillators may be more coherent, or have more similar power spectra, or have smaller mean frequency difference.
However, in general, one does not expect to obtain identical statistical descriptors for the coupled oscillators, even in the limit of strong coupling.
Ideally, one would search for a universal phase description of stochastic oscillators in analogy to deterministic phase reduction.
 
The search for a rigorous notion of phase for stochastic oscillators remained an  open question until two distinct formulations were introduced.
Schwabedal and Pikovsky introduced the mean-return-time phase, which defines isochrons as Poincar\'{e} sections with a \textit{mean} return time property, i.e., the \textit{average} time for a trajectory to perform one full oscillation and return to the same isochron is a constant \cite{cao2020partial,schwabedal2013phase}.
On the other hand, Lindner and Thomas introduced the notion of stochastic asymptotic phase as the complex argument of the $Q$-function, the slowest decaying mode of the Kolmogorov backward operator (also called the stochastic Koopman operator, SKO) \cite{thomas2014asymptotic}.
In the deterministic case, isochrons formulated in terms of the return-time property or the asymptotic behavior of trajectories are equivalent;
perhaps surprisingly, the stochastic generalizations are not equivalent (see
\cite{perez2022quantitative} for a precise relationship).
In the present paper, we will argue that the $Q$-function approach provides a natural framework for formulating the problem of synchronization of coupled stochastic oscillators.

It has been shown that the $Q$-function gives a universal description of stochastic oscillators, greatly simplifying their dynamics and statistical properties \cite{perez2021isostables,perez2023universal}.
The $Q$-function is a nonlinear change of coordinates;
nonlinear stochastic oscillators behave linearly \textit{in the mean} upon being transformed into $Q$-function coordinates, 
just as dynamical phase reduction gives a simplified dynamical description of a deterministic oscillator \cite{perez2020global}.
The relationship between the $Q$-function and deterministic phase reduction was elucidated in recent works, which highlight both the $Q$-function and the deterministic phase function as an eigenfunction of the SKO and the classical Koopman operator, respectively \cite{kato2021asymptotic, perez2023universal}.
Consequently, the $Q$-function approach provides a notion of phase reduction for noisy oscillators which is consistent with phase reduction in the deterministic case.
Indeed, the $Q$-function approach introduces the ``robustly oscillatory" criteria as the defining properties of a stochastic oscillator, i.e., systems which are robustly oscillatory behave ``like stochastic oscillators" and only these systems admit a well-defined $Q$-phase reduction \cite{thomas2014asymptotic}.
In this work, we leverage a $Q$-function approach to study systems of symmetrically coupled stochastic oscillators.
For clarity of exposition, we briefly sketch here the main results of our work; mathematically precise formulations and derivations may be found in section \S \ref{section:results}.

We consider systems of symmetrically coupled oscillators of the form
\begin{equation}\label{eq:intro_coupled_system}
\begin{split}
    d\mathbf{X} &= [\mathbf{f}(\mathbf{X}) + \tau\mathbf{u}_1(\mbX) + \kappa\mathbf{h}(\mathbf{X},\mathbf{Y})]dt + \mathbf{g}_1(\mathbf{X})d\mathbf{W}_1(t)
    \\
    d\mathbf{Y} &= [\mathbf{f}(\mathbf{Y}) + \tau\mathbf{u}_2(\mbY) +\kappa\mathbf{h}(\mathbf{Y},\mathbf{X} )]dt + \mathbf{g}_2(\mathbf{Y})d\mathbf{W}_2(t)
\end{split}
\end{equation}
with related eigenvalue problem
\begin{equation}
\begin{split}
    (\mathcal{L}_x^\dagger + \mathcal{L}_y^\dagger + \tau \mathcal{L}^\dagger_\tau + \kappa \mathcal{L}_{\kappa}^\dagger) Q^*(\mbx,\mby) &= \lambda Q^*(\mbx,\mby)
    \\
    (\mathcal{L}_x + \mathcal{L}_y + \tau \mathcal{L}_\tau + \kappa \mathcal{L}_{\kappa}) P(\mbx,\mby) &= \lambda P(\mbx,\mby)
\end{split} 
\end{equation}
where $\mathcal{L}_x^\dagger$ is the SKO for the individual oscillator in $\mbx$ coordinates (respectively, $\mathcal{L}^\dagger_y$ for the individual oscillator in $\mby$ coordinates), $\mathcal{L}^\dagger_{\kappa}$ is the perturbation to the SKO of the joint system due to  coupling with  strength $\kappa$, and $\mathcal{L}^\dagger_{\tau}$ is the perturbation to the SKO of the joint system due to detuning, regulated by the parameter $\tau$.
That is, $|\tau|>0$ represents a perturbation to the underlying dynamics that leads to a difference in the oscillators' frequencies of oscillation.
The operators $\mathcal{L}_x$, $\mathcal{L}_y$, $\mathcal{L}_{\kappa}$, and $\mathcal{L}_\tau$ are forward (Fokker-Planck) operators, which are the formal adjoints of their respective SKO counterparts (see \S\ref{subsection:Q_phase_reduction} for a more systematic description of these operators).

In the case when \eqref{eq:intro_coupled_system} consists of identical oscillators ($\tau=0$, $\mbg_1=\mbg_2$), when $\kappa=0$ the SKO has repeated eigenvalues with linearly independent eigenfunctions.
Nonzero coupling $\kappa\neq 0$ generically perturbs these eigenvalues and eigenfunctions.
In the case of non-identical oscillators, when $\kappa=0$ the SKO has distinct eigenvalues with linearly independent eigenfunctions.
However, in cases where the non-identical oscillators have similar dynamics, for sufficiently large $\kappa$ we observe numerically that the coupling pulls the eigenvalues together to form a repeated eigenvalue pair at a finite coupling strength.
Further increasing the coupling strength causes the eigenvalues to split.
In bifurcation theory, eigenvalue splitting is often associated with qualitative changes in system behavior, motivating the characterization of conditions under which splitting occurs and its qualitative implications for the coupled system.

To that end, we define a splitting discriminant, $\mathcal{D}$, in terms of the perturbation parameters $\kappa$ and $\tau$, the perturbations to the SKO, $\mathcal{L}^\dagger_\tau$ and $\mathcal{L}^\dagger_\kappa$, and the unperturbed $Q$-functions and corresponding forward eigenfunctions of \eqref{eq:intro_coupled_system}, $Q_{1x}^*(\mbx,\mby)$, $Q_{1y}^*(\mbx,\mby)$, $P_{1x}(\mbx,\mby)$, and $P_{1y}(\mbx,\mby)$,
\begin{equation}
    \mathcal{D}(\kappa,\tau) = \sqrt{\text{trace}(\mathcal{M})^2 - 4\det(\mathcal{M})}
\end{equation}
where the matrix $\mathcal{M}$ is defined in the statement of our Theorem \ref{theorem:Q function perturbation}.

We prove that when $\mathcal{D}(\kappa,\tau)\neq 0$, the repeated $Q$-function eigenvalues of the identical system split to leading order.
We demonstrate that this eigenvalue splitting corresponds to qualitative changes in the coupled system dynamics: a transition from two distinct oscillating units to a single ``robustly oscillatory'' stochastic oscillator.
We also prove that the eigenvalue splitting entails a qualitative change in the power spectra and cross-spectral density of the coupled system in $Q$-function coordinates.
In the special case that $\mathcal{D}(\kappa,\tau)\neq 0$ is purely real, we prove that the power spectra of the individual oscillators peak at identical frequencies, i.e., the oscillators have identical frequencies in $Q$-function coordinates.
In contrast, commonly used statistical descriptors are not expected to match exactly.
These results motivate the following novel definition of synchronization for stochastic oscillators:

\begin{tcolorbox}
\begin{definition}\label{definition} 
    Symmetrically coupled oscillators of the form \eqref{eq:intro_coupled_system} exhibit \textit{$Q$-synchronization} if the leading nontrivial complex conjugate eigenvalues of the SKO undergo a qualitative change, i.e., a repeated pair of eigenvalues is created or destroyed, as system parameters are varied. 
\end{definition}
\end{tcolorbox}

The synchronization behavior of deterministic oscillators is often studied by constructing Arnold tongues, or synchronization regimes, as a function of frequency difference and coupling strength. 
In the case of symmetrically coupled stochastic oscillators of the form \eqref{eq:intro_coupled_system}, we prove that under certain conditions, there exists a relationship $\kappa^* = \mathfrak{K}(\tau^*)$ for which the splitting discriminant vanishes, $\mathcal{D}(\mathfrak{K}(\tau^*),\tau^*)=0$, and the repeated SKO eigenvalues do not split.
In such cases, we derive a closed-form expression for $\mathfrak{K}$ for \textit{any} coupled system of the form \eqref{eq:intro_coupled_system}.
We call parameter values which satisfy $\kappa^*=\mathfrak{K}(\tau^*)$ KT points, and show that KT points give a leading order synchronization boundary in the $\tau$-$\kappa$ plane that is analogous to a $1:1$ mode-locking Arnold tongue for deterministic oscillators. 

The manuscript is organized as follows. 
Section \S \ref{section:background} provides a detailed summary of existing theory for $Q$-phase reduction.
Section \S\ref{section:results} establishes the main contributions of our work.
We first derive expressions governing the leading order change of eigenfunction-eigenvalue pairs of the SKO and Fokker-Planck operator.
We apply this theory to derive general conditions under which a qualitative change in the SKO spectrum occurs, which we relate to synchronization.
We prove that the onset of $Q$-synchronization is accompanied by qualitative changes in the power spectra and cross-spectral density of the oscillators.
We also derive a leading order expression for $Q$-synchronization regimes which are analogous to $1:1$ mode-locking Arnold tongues for deterministic oscillators.
In section \S\ref{section:examples}, we consider several examples, including 
a 2D system comprising two coupled  noisy Kuramoto oscillators,
a 4D system comprising two coupled planar Ornstein-Uhlenbeck processes, 
and a nine-dimensional system of coupled 3D discrete-state oscillators.
Section \S\ref{section:discussion} provides a discussion and directions for future work.

%%%%%%%%%%%%%%%%%%%%%%%%%%%%%%%%%%%%%%%%%%%%%%%%%%%%%%%%%%%%%%

\section{Background and Notation}\label{section:background}

\subsection{Phase-reduction of deterministic oscillators}\label{subsection:phase-reduction}

For the sake of completeness, we briefly review phase reduction for deterministic limit-cycle oscillators.
Consider a smooth ($\mathcal{C}^2$) dynamical system
\begin{equation}\label{eq:oscillator}
    \mathbf{x}' = \mathbf{f}(\mathbf{x})
\end{equation}
with $\mathbf{x}\in \R^n$, assumed to admit a $T$-periodic stable limit-cycle solution, $\gamma(t)=\gamma(t+T)$.
Here and elsewhere, $'$ denotes a derivative with respect to time unless otherwise specified.
One may define a phase function $\Theta(\mathbf{x}(t))=\phi(t)\in [0,2\pi)$ on the limit cycle so that the phase evolves at a constant rate
\begin{equation}\label{eq:phase_equation}
    \phi' = \frac{2\pi}{T} = \omega
\end{equation}
with $\omega$ the frequency of oscillation.
Note the phase function, $\Theta(\mbx)$, is uniquely defined, up to an arbitrary phase offset. 
The phase function may be extended to the basin of attraction of the limit cycle via isochrons, level curves of the phase function \cite{guckenheimer1975isochrons, winfree1980geometry}.
Identification of phase on versus near the periodic orbit exploits  the asymptotic convergence of trajectories to the stable limit cycle.
Any initial conditions $\mathbf{x}_0$ and $\mathbf{y}_0$ with corresponding trajectories $\mathbf{x}(t)$ and $\mathbf{y(t)}$ that satisfy
\begin{equation}
    \lim_{t\to \infty} \|\mathbf{x}(t)-\mathbf{y}(t)\| = 0
\end{equation}
for any norm $\|\cdot \|$ have the same phase. 
Note that \eqref{eq:phase_equation} holds everywhere in the basin of attraction, provided that 
\begin{equation}\label{eq:normalization}
    \mathbf{f}(\gamma(t)) \cdot \nabla \Theta(\gamma(t)) = \omega
\end{equation}
The function $\nabla \Theta(\gamma(t))$ is called the (infinitesimal) phase response curve and is often denoted as $\nabla \Theta \equiv \mathbf{Z}$.

%%%%%%%%%%%%%%%%%%%%%%%%%%%%%%%%%%%%%%%%%%%%%%%%%%%%%%%%%%%%%%%%%%

\subsection{\textit{Q}-phase reduction}\label{subsection:Q_phase_reduction}

Deterministic asymptotic phase is not well-defined for stochastic systems.
Unlike the deterministic case, at long times an ensemble of independent realizations will tend toward a unique stationary distribution, so the long-time asymptotic behavior of the trajectories can no longer distinguish between initial conditions.
Accordingly, Thomas and Lindner extended the notion of asymptotic phase to stochastic systems by considering the evolution of the density describing an ensemble of trajectories, rather than a single trajectory \cite{thomas2014asymptotic}.
Here, we briefly summarize previously established theory for the $Q$-phase reduction of stochastic oscillators \cite{perez2021isostables, perez2023universal, thomas2014asymptotic}.

\subsubsection{Operator definitions}

Consider a Langevin equation\footnote{The $Q$ function approach \cite{perez2023universal,thomas2014asymptotic} applies also to processes with discrete jumps and hybrid discrete/continuous processes.
See \S\ref{subsubsection:9D discrete-state model} for an application involving a discrete-state system} of the form
\begin{equation}\label{eq:stochastic_oscillator}
    d\mathbf{X} = \mathbf{f}(\mathbf{X}) dt + \mathbf{g}(\mathbf{X})d\mathbf{W}(t)
\end{equation}
with $\mathbf{X}(t)$ a stochastic process taking values in $\R^n$, $\mbf:\R^n\to\R^n$ a smooth ($\mathcal{C}^2$) vector field, $\mathbf{g}$ an $n\times m$ matrix with smooth ($\mathcal{C}^2$) entries such that $\mathbf{g}\mathbf{g}^T$ is nonsingular, and $d\mathbf{W}(t)$ an $m \times 1$ vector of independent increments of a Wiener process. 
We interpret \eqref{eq:stochastic_oscillator} in the sense of It\^{o}.
The conditional density 
\begin{equation}\label{eq:condition density}
    \rho(\mathbf{y},t \given \mathbf{x},s) = \frac{1}{|d\mby|}\text{Pr}\{\mbX(t) \in [\mby, \mby + d\mby] \given \mbX(s) = \mbx \}
\end{equation}
for times $t>s$ satisfies the forward equation
\begin{equation}\label{eq:forward_equation}
    \frac{\partial}{\partial t} \rho = \mathcal{L}[\rho] = -\sum_{i} \partial_{x_i}[\mathbf{f} \rho] + \frac{1}{2}\sum_{ij} \partial_{x_i x_j}\{ [\mathbf{g}\mathbf{g}^T]_{ij} \rho \}
\end{equation}
and the backward equation
\begin{equation}\label{eq:backward_equation}
    -\frac{\partial}{\partial s} \rho = \mathcal{L}^\dagger[\rho] = \sum_{i} \mathbf{f}_i\partial_{x_i}[ \rho] + \frac{1}{2}\sum_{ij} [\mathbf{g}\mathbf{g}^T]_{ij}\partial_{x_i x_j}\{  \rho \}
\end{equation}
with $1\leq i,j \leq n$. 
We discuss boundary conditions \cite{gardiner1985handbook}   for the forward and backward equations \eqref{eq:forward_equation} and \eqref{eq:backward_equation} on a case-by-case basis for  specific examples later in this manuscript.

The backward operator $\mathcal{L}^\dagger$, also known as the infinitesimal generator of the Markov process, or equivalently the stochastic Koopman operator (SKO), describes how observables evolve forward in time, in the mean.
It acts on smooth functions, $q$, which are bounded on compact sets, meaning that 
$q\in \mathcal{C}^2(\Omega^n)\cap L_\infty(\Omega^n)$
for every compact subset
$\Omega^n\subset\mathbb{R}^n$.
The forward operator $\mathcal{L}$, or Perron-Frobenius operator, specifies how probability densities evolve forward in time. 
It acts on smooth, integrable functions, specifically $\mathcal{C}^2(\R^n)\cap L_1(\R^n)$.
Note that (under an appropriate choice of boundary conditions) $\mathcal{L}$ and $\mathcal{L}^\dagger$ are adjoint to one another with respect to the standard inner product on $\R^n$.

\subsubsection{Single-oscillator assumptions}

We restrict attention to systems for which the forward and backward operators have a discrete set of eigenvalues (with no accumulation point at zero) with a bi-orthonormal set of eigenfunctions
\begin{equation}\label{eq: biorth}
\begin{split}
    &\mathcal{L}[P_\lambda] = \lambda P_\lambda, \quad \mathcal{L}^\dagger[Q^*_\lambda] = \lambda Q^*_\lambda
    \\
    &\langle Q_\lambda,P_{\lambda'} \rangle = \int d\mathbf{x}\; Q_\lambda^*(\mathbf{x}) P_{\lambda'}(\mathbf{x}) \;  = \delta_{\lambda,\lambda'}
\end{split}
\end{equation}
and for which the Fredholm alternative holds for both $\mathcal{L}$ and $\mathcal{L}^\dagger$.
Under these assumptions, the conditional probability density of \eqref{eq:stochastic_oscillator} may be expressed as a linear combination of the eigenmodes
\begin{equation}
    \rho(\mathbf{y},t \given \mathbf{x},s) = P_0 + \sum_{\lambda\neq 0} e^{\lambda(t-s)}P_\lambda(\mathbf{y})Q^*_\lambda(\mathbf{x})
\end{equation}
Here, $P_0$ is the stationary distribution corresponding to $\lambda_0=0$, which we assume to be unique ($\dim\ker\mathcal{L}=1$).
We further assume that the system satisfies the  \textit{robustly oscillatory criteria}, i.e., that

\begin{assumption}\label{robustly_osc_criteria} \textit{(robustly oscillatory criteria)}
\begin{enumerate}
    \item (stability) all eigenvalues apart from the stationary eigenvalue, $\lambda_0$, have negative real parts,
    \item (oscillation) the first nontrivial eigenvalue, $\lambda_1 = \mu \pm i\omega $, is a unique complex-conjugate pair, with quality factor $|\omega/\mu| \gg 1$, and
    \item (spectral gap) all other eigenvalues $\lambda'$ have more negative real parts, $\Re[{\lambda'}] < \upsilon\,\Re[{\lambda_1}]$, where $0<\upsilon\in \mathbb{R}$.
\end{enumerate}
\end{assumption}

These conditions ensure that at intermediate-long times, the dynamics of the density obey
\begin{equation}
    \rho(\mathbf{y},t \given \mathbf{x},s) - P_0 \approx    e^{\lambda_1(t-s)}P_{\lambda_1}(\mathbf{y})Q^*_{\lambda_1}(\mathbf{x}) + \text{c.c.}
\end{equation}
which implies that the dynamics are dominated by the slowest decaying eigenfunctions.
Of particular utility is the slowest decaying complex-valued backward mode, the $Q$-function.
One may extract a well-defined phase for stochastic oscillators by considering the complex argument of the $Q$-function
\begin{equation}
    \Psi(\mathbf{x}) = \arg{Q_{\lambda_1}^*(\mathbf{x})},
\end{equation}
where we take $\text{arg}(z)\in[0,2\pi)$ for complex $z\not=0$.
We call $\Psi$ the \emph{stochastic asymptotic phase}.
More generally, the $Q$-function itself represents a  simplifying change of coordinates with advantages analogous to classical phase reduction.
Trajectories in $Q$-function coordinates evolve linearly in the mean along trajectories $\mbX(t)$ \cite{perez2021isostables}
\begin{equation}\label{eq:Q evolution}
    \frac{d}{dt}\mathbb{E}^\mbx[Q^*_{\lambda_1}(\mathbf{X}(t))] = \lambda_1 \mathbb{E}^\mbx[Q^*_{\lambda_1}(\mathbf{X}(t))]
\end{equation}
where $\mathbb{E}^\mbx$ denotes expectation with respect to the law of the process $\mbX(t)$ started with initial condition $\mbX(0)=\mbx$ \cite{oksendal2013stochastic}.\footnote{We will use $\langle q\rangle$ to denote expectation of a random variable with respect to the stationary distribution $P_0$.}
Moreover, with the convention that $Q^*_{\lambda_1}$ is normalized to have unit variance, i.e., $\langle |Q_{\lambda_1}^*(\mathbf{X}(t))|^2 \rangle = 1$, exact expressions are available for the power spectrum of $Q^*_{\lambda_1}(\mathbf{X}(t))$,
\begin{equation}\label{eq:Q function power spectrum}
    S(\nu) = \frac{2|\mu|}{\mu^2 + (\nu - \omega)^2}
\end{equation}
and for the cross-spectrum between eigenfunctions $Q^*_{\lambda}(\mathbf{X}(t))$ and $Q^*_{\lambda'}(\mathbf{X}(t))$ corresponding to different eigenvalues $\lambda,$ $\lambda'$ \cite{perez2023universal}
\begin{equation}\label{eq:Q function cross spectrum}
    C_{\lambda',\lambda}(\nu) = -\langle Q_{\lambda'}^*Q_\lambda\rangle \left(\frac{1}{\lambda'-i\nu} + \frac{1}{\lambda^*+i\nu}\right).
\end{equation}

The $Q$-function directly generalizes the notion of deterministic phase.
That is, in the case where a system of the form \eqref{eq:stochastic_oscillator} with $\mathbf{g}(\mathbf{x})=0$ admits a stable limit-cycle solution of period $T=2\pi/\omega$, the stochastic asymptotic phase reduces to the classical deterministic phase function.
If $\Theta(\mathbf{x})$ is the deterministic phase function, the function $Q^* = e^{i\Theta(\mathbf{x})}$ is an eigenfunction of the backward operator with eigenvalue $\lambda = i \omega$ \cite{thomas2014asymptotic}.

\subsubsection{Coupled-oscillator  assumptions}\label{para:general_assumptions}
Now consider a system of symmetrically coupled stochastic oscillators of the form
\begin{equation}\label{eq:SDE}
\begin{split}
    d\mathbf{X} &= [\mathbf{f}(\mathbf{X}) + \tau\mathbf{u}_1(\mbX) + \kappa\mathbf{h}(\mathbf{X},\mathbf{Y})]dt + \mathbf{g}_1(\mathbf{X})d\mathbf{W}_1(t)
    \\
    d\mathbf{Y} &= [\mathbf{f}(\mathbf{Y}) + \tau\mathbf{u}_2(\mbY) +\kappa\mathbf{h}(\mathbf{Y},\mathbf{X} )]dt + \mathbf{g}_2(\mathbf{Y})d\mathbf{W}_2(t)
\end{split}
\end{equation}
where $\mbX(t)$ and $\mbY(t)$ are stochastic processes taking values in $\mathbb{R}^n$, $\mbf, 
\mbu_i: \mathbb{R}^n \to \mathbb{R}^n$ are smooth ($\mathcal{C}^2)$ vector fields, $\mbg_i$ are  nonsingular $n\times m$ matrices with smooth ($\mathcal{C}^2$) entries, and the components of the $d\mbW_i(t)$ are increments of independent Weiner processes.
The function $\mbh : \mathbb{R}^n\times \mathbb{R}^n \to \mathbb{R}^n$ is a smooth ($\mathcal{C}^2$) vector field describing the coupling between the oscillators.
We interpret \eqref{eq:SDE} in the sense of It\^{o}.
Unless stated otherwise below, we assume throughout that the individual isolated oscillators ($\kappa=0$) satisfy the \textit{robustly oscillatory criteria} \ref{robustly_osc_criteria}.
We further assume the following \textit{coupled oscillator criteria}:
\begin{assumption}\label{assumptions:coupled oscillator criteria} \textit{(coupled oscillator criteria)}
    \begin{itemize}
    \item When $\kappa=\tau=0$,
    \begin{itemize}
        \item each individual unperturbed oscillator satisfies the \textit{robustly oscillatory criteria} \ref{robustly_osc_criteria},
        \item the forward and backward operators of the isolated oscillators and the joint system have a discrete eigenvalue spectrum (with no accumulation point at zero), and a complete bi-orthonormal set of eigenfunctions, and
        \item the Fredholm alternative holds for both the forward and backward operators of the joint system.
    \end{itemize}
    
    \item There exist open sets $\mathcal{K},\mathcal{T}\subseteq \mathbb{R}$ containing 0 where 
    for all $(\kappa,\tau)\in \mathcal{K}\times \mathcal{T}$, 
\begin{itemize} \item the joint system admits a unique stationary distribution $P_0(\mbx,\mby;\kappa,\tau)$ which corresponds to the unique zero eigenvalue of its forward operator,
\item the stationary distribution, when written as  $P_0(\mbx,\mby;\epsilon \kappa,\epsilon \tau)$, has $\mathcal{C}^1$ dependence on $\epsilon$, under the re-parameterization $(\kappa,\tau) \to (\epsilon \kappa,\epsilon \tau)$, for $|\epsilon|\le 1$, and
\item  all other forward and backward eigenfunctions and corresponding eigenvalues are also $\mathcal{C}^1$ in $\epsilon$ under the same re-parameterization $(\kappa,\tau)\to(\epsilon \kappa,\epsilon \tau)$ for $|\epsilon|\le 1$.
\end{itemize}
\end{itemize}
\end{assumption}

In isolation $(\kappa=0)$, each individual oscillator admits its own eigenfunction expansion, and has its own $Q$-function and corresponding forward eigenfunction.
However, the $2n$-dimensional system \eqref{eq:SDE} also has an eigenfunction expansion.
It is straightforward to show that when $\kappa=0$, eigenfunctions of the system \eqref{eq:SDE} may be written as products of eigenfunctions of the isolated oscillators.
The corresponding eigenvalues are the sums of the eigenvalues of the isolated oscillators.
Because the backward eigenfunction $Q_0^*=1$ with eigenvalue $\lambda_0=0$ is always an eigenpair of a robustly oscillatory system, the individual $Q$-functions and eigenvalues are also eigenfunctions of the $2n$-dimensional system \eqref{eq:SDE} when $\kappa=0$.
Formally, these eigenfunctions are functions of both $\mathbf{X}$ and $\mathbf{Y}$; practically they depend on only their respective coordinates.
As a consequence, \eqref{eq:SDE} may not satisfy the robustly oscillatory criteria (lack of a spectra gap) when $\kappa=0$. 
As a function of $\kappa$, one generically expects that \eqref{eq:SDE} admits a family of eigenvalues and eigenfunctions perturbed from their $\kappa=0$ counterparts.
Such eigenfunctions may depend on both $\mathbf{X}$ and $\mathbf{Y}$ non-trivially.
In the following, when we speak of the $Q$ functions (plural), we refer to the eigenfunctions of the $2n$-dimensional coupled system as a function of both variables, as opposed to the $Q$ functions of the isolated oscillators in only one variable, unless otherwise specified.

We remark that the small parameters $\kappa$ and $\tau$ are analogous to coupling strength and frequency difference parameters for Arnold tongues for \textit{deterministic oscillators}.
In \S\ref{section:results}, our Corollary \ref{corollary:splitting discriminant} describes how to analytically construct similar regions in the two-parameter plane for \textit{stochastic} oscillators.

\subsubsection{The Fredholm alternative}

In \S\ref{section:results}, we use the Fredholm alternative for differential operators, which we state below without proof for the purpose of a self-contained manuscript.
Let the operators $L$ and $L^\dagger$, along with specific boundary conditions, be adjoint with respect to some inner product $\langle\cdot,\cdot\rangle$, i.e.  $\langle L u,v\rangle=\langle u,L^\dagger v\rangle$ for all $u$ and $v$. 
See \S4.3.5 in \cite{keener2018principles} for more details.
\begin{theorem}\label{freddy} 
(The Fredholm alternative)
    For the differential operator $L$
    \begin{enumerate}
        \item The solution of $Lu=f$ is unique if and only if $Lu=0$ has only the trivial solution $u=0$.
        \item The solution of $Lu=f$ exists if and only if $\langle f,v \rangle=0$ for all $v$ for which $L^\dagger v =0$.
    \end{enumerate}
\end{theorem}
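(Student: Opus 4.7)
The plan is to work in the Hilbert space $\mathcal{H}$ on which $L$ and $L^\dagger$ act, with the inner product $\langle\cdot,\cdot\rangle$ inherited from the appropriate function space, so that the formal adjoint identity $\langle Lu,v\rangle = \langle u, L^\dagger v\rangle$ holds on the common domain determined by the prescribed boundary conditions. Part (1) is essentially a restatement of linearity: if $u_1$ and $u_2$ both solve $Lu=f$, then $w = u_1 - u_2$ satisfies $Lw=0$, so uniqueness of solutions is equivalent to $\ker L = \{0\}$. The forward implication of Part (2) is equally elementary: if $Lu=f$ is solvable and $v\in\ker L^\dagger$, then
\begin{equation*}
\langle f,v\rangle = \langle Lu,v\rangle = \langle u, L^\dagger v\rangle = 0.
\end{equation*}
Thus the substantive content is the reverse implication of Part (2), which amounts to the inclusion $\operatorname{Range}(L) \supseteq (\ker L^\dagger)^\perp$. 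Combined with the general Hilbert-space identity $\overline{\operatorname{Range}(L)} = (\ker L^\dagger)^\perp$, it suffices to show that $\operatorname{Range}(L)$ is closed.

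To establish closedness I would invoke Riesz--Schauder theory. Since the operators of interest (Fokker--Planck and backward Kolmogorov) are second-order elliptic with smooth coefficients, and the paper's assumptions already require a discrete spectrum with no accumulation at zero, one can pick a $\zeta$ in the resolvent set and define the compact operator $K = (\zeta I - L)^{-1}$ on $\mathcal{H}$, compactness of the resolvent following from elliptic regularity together with a Rellich-type embedding. The equation $Lu = f$ is then equivalent to $(I - \zeta K)u = -Kf$, and the standard Riesz--Schauder alternative for a compact perturbation of the identity yields that $\ker(I - \zeta K)$ is finite-dimensional, $\operatorname{Range}(I - \zeta K)$ is closed, and the alternative in exactly the stated form transfers back to $L$.

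The main obstacle will be justifying compactness of the resolvent rigorously across the heterogeneous settings considered in the paper. For a bounded domain with regular reflecting or absorbing boundaries, this is a textbook consequence of the Rellich--Kondrachov theorem applied to the domain of $L$ viewed as a Sobolev space. For unbounded state spaces $\R^n$, compactness of the resolvent requires additional confinement hypotheses on the drift and diffusion so that the stationary distribution $P_0$ is integrable, typically encoded through a Lyapunov function or a weighted $L^2$ spectral-gap estimate. Handling the hybrid discrete--continuous systems alluded to in the footnote of Section \ref{subsection:Q_phase_reduction} calls for extending the argument to a graph-Laplacian component coupled to the diffusion operator, which is straightforward in principle but demands a tailored functional setting. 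Once compactness has been pinned down in each case, the algebraic conclusions of the theorem are immediate from Riesz--Schauder.
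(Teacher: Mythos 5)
The paper does not actually prove Theorem~\ref{freddy}; it is stated explicitly ``without proof for the purpose of a self-contained manuscript,'' with a pointer to Keener \S4.3.5, and moreover Assumption~\ref{assumptions:coupled oscillator criteria} builds validity of the Fredholm alternative for the joint forward and backward operators directly into the hypotheses of the main results, so within the paper it holds by fiat rather than by argument. Your Riesz--Schauder outline is therefore not an alternative to a paper proof but an independent one where the paper supplies none, and as an outline it is sound: reducing Part~(1) to triviality of $\ker L$, using the pairing identity $\langle Lu,v\rangle=\langle u,L^\dagger v\rangle$ for the easy direction of Part~(2), recognizing that the hard direction is exactly closedness of $\operatorname{Range}(L)$ together with $\overline{\operatorname{Range}(L)}=(\ker L^\dagger)^\perp$, and converting to a compact-perturbation-of-identity problem via the resolvent are all standard steps correctly assembled. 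One imprecision worth fixing: you lean on ``discrete spectrum with no accumulation at zero'' as if it supplied compactness of the resolvent, but that hypothesis does not by itself imply a compact resolvent (it is a consequence, not a cause, of compactness, and the paper's assumption is genuinely weaker than what your argument requires). Compactness must instead be established from structure: Rellich--Kondrachov handles the torus model of \S\ref{subsubsection:2D Kuramoto model}, the OU model on $\R^n$ in \S\ref{subsubsection:4D linear model} needs the confinement/Lyapunov argument you mention, and the discrete-state model of \S\ref{subsubsection:9D discrete-state model} is a finite matrix for which the alternative is elementary linear algebra. So your proposal is a correct route, with the resolvent-compactness step remaining a case-by-case task that you have accurately scoped, provided you do not let the spectral-discreteness assumption do more work than it can.
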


In what follows, we take $L^\dagger$ to be the operator adjoint to $L$ with respect to the inner product as defined in equation \eqref{eq: biorth}.
Practically, the Fredholm alternative implies that the inhomogenous equation $Lu=f$ has a solution if and only if the function $f$ is orthogonal to the span of the nullspace of the adjoint operator, $L^\dagger$.
In the following, we refer to this `orthogonality constraint' as a solvability criterion for $Lu=f$.

\subsubsection{Notation}
We denote the $Q$-functions and corresponding forward eigenfunctions for the $\mbx$ and $\mby$ \textit{isolated} oscillators as $Q_1^*(\mbx),P_1(\mbx)$ and $Q_1^*(\mby),P_1(\mby)$, respectively.
We denote the $Q$-functions and corresponding forward eigenfunctions of the joint identical system \eqref{eq:SDE} $(\kappa = 0, \tau=0,\mbg_1=\mbg_2)$ as
\begin{equation}\label{eq:notation}
    \begin{split}
        Q_{1x}^*(\mbx,\mby) &\equiv Q_1^*(\mbx)\mathbb{1}(\mby)
        \\
        P_{1x}(\mbx,\mby) &\equiv P_1(\mbx)P_0(\mby)
        \\
        Q_{1y}^*(\mbx,\mby) &\equiv Q_1^*(\mby)\mathbb{1}(\mbx)
        \\
        P_{1y}(\mbx,\mby) &\equiv P_1(\mby)P_0(\mbx)
    \end{split}
\end{equation}
where $\mathbb{1}(\mbz)=1$ is the  constant unit function for any input $\mbz\in\mathbb{R}^n$, and $P_0(\mbz)$ is understood to represent the stationary distribution of an individual oscillator.
The eigenvalues associated with these eigenfunctions are denoted $\lambda_1$ and have algebraic multiplicity 2 because each oscillator is identical when unperturbed.
We use calligraphic typesetting $\mathcal{Q}_\pm^*(\mbx,\mby)$ and $\mathcal{P}_\pm(\mbx,\mby)$ to denote the true eigenfunctions of \eqref{eq:SDE} when at least one of $\kappa,\tau$ is nonzero.
We denote the corresponding eigenvalues $\lambda_\pm(\kappa,\tau)=\mu_\pm+i\omega_\pm$.
The $\pm$ subscript is meant to indicate the splitting of the eigenvalues under perturbation, as we will show in our Theorem \ref{theorem:Q function perturbation}.

%%%%%%%%%%%%%%%%%%%%%%%%%%%%%%%%%%%%%%%%%%%%%%%%%%%%%%%%%%%%%%%%%%%%%%%%%%%%%%%%%%%%%%%%%%%%%%%%%%%%%%%%%%%%

\section{Results}\label{section:results}

In the following sections, we study the relation between the eigenfunctions and eigenvalues of the jointly perturbed ($\kappa,\tau\neq 0$) and unperturbed $(\kappa=\tau=0)$ system \eqref{eq:SDE}.
In \S\ref{subsec: stationary}, we study the distortion of the stationary distribution, while in \S\ref{subsection:results_Q_functions} we study the distortion of the $Q$-function and corresponding eigenvalues.
Then, we relate qualitative changes in the $Q$-function eigenvalues to qualitative changes in the power spectra and cross-spectral density in \S\ref{ssec:PowerSpectra_and_CrossSpectralDensity}.
These results motivate a novel definition of synchronization, which we discuss in \S\ref{subsec: q synch}.
We apply our theory to several examples in \S\ref{section:examples}.

\subsection{Perturbation of the stationary distribution}\label{subsec: stationary}

Generically, one expects that a nonzero perturbation $\kappa,\tau\neq 0$ will distort the unperturbed stationary distribution of \eqref{eq:SDE}.
To that end, we seek a description of the stationary distribution of \eqref{eq:SDE} to leading order in both $\kappa$ and $\tau$.
The stationary distribution is the eigenfunction, $\mathcal{P}_0(\mbx,\mby;\kappa,\tau)$, corresponding to eigenvalue, $\lambda_0=0$, of the Fokker-Planck equation.
The stationary distributions of the isolated oscillators satisfy
\begin{equation}
    \begin{split}
        \mathcal{L}_x P_0(\mbx) &= 0
        \\
        \mathcal{L}_y P_0(\mby) &= 0
    \end{split}
\end{equation}
The stationary distribution of the jointly coupled system satisfies
\begin{equation}\label{eq: forward equation}
    (\mathcal{L}_x + \mathcal{L}_y + \tau \mathcal{L}_{\tau} + \kappa \mathcal{L}_\kappa) \mathcal{P}_0(\mbx,\mby;\kappa,\tau) = 0
\end{equation}
The following lemma describes the leading-order form of the stationary distribution, $\mathcal{P}_0(\mbx,\mby;\kappa,\tau)$, under simultaneous perturbations in both $\kappa$ and $\tau$.

\begin{lemma}\label{lemma:stationary distribution perturbation}
Assume the coupled oscillator criteria \ref{assumptions:coupled oscillator criteria} hold for a system of symmetrically coupled oscillators of the form \eqref{eq:SDE}.
Then, for fixed $(\kappa,\tau)\in \mathcal{K}\times\mathcal{T}$, under the re-parameterization $(\kappa,\tau) \to (\epsilon\kappa,\epsilon \tau)$, there exists a unique function, $\mathcal{P}_c(\mbx,\mby;\kappa,\tau)$, which satisfies
\begin{equation}
\begin{split}
    &(\mathcal{L}_x + \mathcal{L}_y)\mathcal{P}_c(\mbx,\mby;\kappa,\tau) = - \left(\tau \mathcal{L}_{\tau} + \kappa \mathcal{L}_\kappa \right) [P_0(\mbx)P_0(\mby)]
    \\
    &\iint_{\mathbb{R}^{2n}} d\mbx \, d\mby \;\mathcal{P}_c(\mbx,\mby;\kappa,\tau) = 0
\end{split}
\end{equation}
Moreover, the stationary distribution under the re-parameterization, $\mathcal{P}_0(\mbx,\mby; \epsilon \kappa,\epsilon \tau)$, of \eqref{eq:SDE} is given to leading order $\epsilon$ by
\begin{equation}
    \mathcal{P}_0(\mbx,\mby;\epsilon\kappa,\epsilon\tau) = P_0(\mbx)P_0(\mby) + \epsilon\mathcal{P}_c(\mbx,\mby;\kappa,\tau)  + \mathcal{O}(\epsilon^2)
\end{equation}
\end{lemma}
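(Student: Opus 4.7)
The plan is to carry out a straightforward regular perturbation expansion of the stationary Fokker-Planck eigenvalue problem in the small parameter $\epsilon$, and then to invoke the Fredholm alternative (Theorem \ref{freddy}) to establish both existence and uniqueness of the first-order correction $\mathcal{P}_c$. Starting from the reparameterized forward equation $(\mathcal{L}_x + \mathcal{L}_y + \epsilon\tau\mathcal{L}_\tau + \epsilon\kappa\mathcal{L}_\kappa)\mathcal{P}_0(\mbx,\mby;\epsilon\kappa,\epsilon\tau)=0$, I would write
\begin{equation}
\mathcal{P}_0(\mbx,\mby;\epsilon\kappa,\epsilon\tau) = \mathcal{P}_0^{(0)}(\mbx,\mby) + \epsilon\,\mathcal{P}_0^{(1)}(\mbx,\mby) + \mathcal{O}(\epsilon^2),
\end{equation}
which is legitimate by the $\mathcal{C}^1$ assumption in the coupled oscillator criteria \ref{assumptions:coupled oscillator criteria}. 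Collecting at $\mathcal{O}(\epsilon^0)$ gives $(\mathcal{L}_x+\mathcal{L}_y)\mathcal{P}_0^{(0)}=0$, and I would argue that because each isolated oscillator has a unique stationary distribution ($\dim\ker\mathcal{L}_x=\dim\ker\mathcal{L}_y=1$) and the eigenfunctions of the joint uncoupled generator are products of the individual ones with eigenvalues given by sums, the robustly oscillatory assumption (strict negativity of all nonzero real parts) forces $\ker(\mathcal{L}_x+\mathcal{L}_y)=\mathrm{span}\{P_0(\mbx)P_0(\mby)\}$. Normalization of $\mathcal{P}_0$ to unit mass then fixes $\mathcal{P}_0^{(0)}=P_0(\mbx)P_0(\mby)$.

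Collecting at $\mathcal{O}(\epsilon^1)$ yields exactly the inhomogeneous equation for $\mathcal{P}_c:=\mathcal{P}_0^{(1)}$ stated in the lemma. To verify existence by the Fredholm alternative, I would note that the adjoint operator $\mathcal{L}_x^\dagger+\mathcal{L}_y^\dagger$ annihilates constants (since $\mathcal{L}^\dagger$ is a backward operator with no zeroth-order term), so the solvability condition reduces to
\begin{equation}
\iint_{\mathbb{R}^{2n}} (\tau\mathcal{L}_\tau + \kappa\mathcal{L}_\kappa)[P_0(\mbx)P_0(\mby)]\, d\mbx\, d\mby = 0.
\end{equation}
This follows because $\mathcal{L}_\tau$ and $\mathcal{L}_\kappa$ are forward (Fokker-Planck) operators in divergence form, so their integral against the constant $1$ vanishes under the standard probability-preserving boundary conditions implicit in the coupled oscillator criteria.

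For uniqueness, I would observe that the general solution of the inhomogeneous equation differs from any particular solution by an element of $\ker(\mathcal{L}_x+\mathcal{L}_y)=\mathrm{span}\{P_0(\mbx)P_0(\mby)\}$; hence the scalar multiple is fixed by imposing the zero-integral condition $\iint\mathcal{P}_c\,d\mbx\,d\mby=0$. This constraint is moreover forced by the physics: since $\mathcal{P}_0(\cdot;\epsilon\kappa,\epsilon\tau)$ must integrate to $1$ for every $\epsilon$ in a neighborhood of $0$, and $\mathcal{P}_0^{(0)}$ already integrates to $1$, each higher-order correction must integrate to zero.

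The main obstacle is the solvability check, and more precisely the justification that integrating $\mathcal{L}_\tau$ and $\mathcal{L}_\kappa$ against the constant function yields zero --- which requires a careful, case-by-case treatment of boundary conditions (decay at infinity on $\mathbb{R}^{2n}$, periodicity on a torus, or reflecting/no-flux conditions for examples like the ones in \S\ref{section:examples}). The other delicate point is establishing that $\ker(\mathcal{L}_x+\mathcal{L}_y)$ is exactly one-dimensional, which relies nontrivially on the spectral assumptions; once the eigenfunction expansion of the joint uncoupled operator is in hand as a tensor product of the individual ones, the conclusion is immediate from the strict spectral gap in assumption \ref{robustly_osc_criteria}.
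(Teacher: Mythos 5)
Your proposal is correct and follows essentially the same route as the paper: a regular perturbation expansion in $\epsilon$, reading off the $\mathcal{O}(\epsilon)$ equation, and fixing the kernel ambiguity by imposing the zero-integral normalization. The one (minor) place you go beyond the paper is the existence step, where you verify the Fredholm solvability condition directly---by noting the adjoint nullspace is the constants and that $\mathcal{L}_\tau,\mathcal{L}_\kappa$ are in divergence form and so integrate to zero---rather than inferring solvability, as the paper does, from the assumed existence of a $\mathcal{C}^1$ family of stationary distributions.
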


\begin{proof}
Note that when $\kappa=\tau=0$, it is clear that $\mathcal{P}_0(\mbx,\mby;\kappa,\tau)=P_0(\mbx)P_0(\mby)$.
Now, for fixed $(\kappa,\tau)\in \mathcal{K}\times \mathcal{T}$, consider the re-parameterization
$(\kappa,\tau) \to (\epsilon \kappa, \epsilon \tau)$ for  $|\epsilon|\leq 1$.
By assumption, the stationary distribution has an expansion of the form
\begin{equation}
    \mathcal{P}_0(\mbx,\mby;\epsilon\kappa,\epsilon\tau) = P_0(\mbx)P_0(\mby) + \epsilon\mathcal{P}_c(\mbx,\mby;\kappa,\tau) + \mathcal{O}(\epsilon^2)
\end{equation}
Then, the forward equation \eqref{eq: forward equation} may be expressed in terms of $\epsilon$,
\begin{equation}
\begin{split}
    &(\mathcal{L}_x + \mathcal{L}_y + \epsilon \tau \mathcal{L}_\tau + \epsilon\kappa \mathcal{L}_\kappa)\left(P_0(\mbx)P_0(\mby) + \epsilon\mathcal{P}_c(\mbx,\mby;\kappa,\tau) \right) 
    \\
    &\quad+ \mathcal{O}(\epsilon^2) = 0
\end{split}
\end{equation}
To first order in $\epsilon$, we obtain
\begin{equation}\label{eq:linear expansion forward}
    (\mathcal{L}_x + \mathcal{L}_y)\mathcal{P}_c(\mbx,\mby;\kappa,\tau) = - (\tau \mathcal{L}_\tau + \kappa \mathcal{L}_\kappa)[P_0(\mbx) P_0(\mby)] 
\end{equation}
By the assumption of a unique stationary distribution, equation \eqref{eq:linear expansion forward}
is guaranteed to have a solution.
However, we remark that the first order correction, $\mathcal{P}_c(\mbx,\mby;\kappa,\tau)$, need not be unique, as any solution of the form $\mathscr{P}(\mbx,\mby) = \mathcal{P}_c(\mbx,\mby;\kappa,\tau) + k P_0(\mbx)P_0(\mby)$ for $k\in \mathbb{R}$ also satisfies \eqref{eq:linear expansion forward}.
To establish uniqueness, we impose the condition
\begin{equation}\label{eq:constraint}
        \mathcal{P}_c(\mbx,\mby;\kappa,\tau) \perp Q_0^*(\mbx)Q_0^*(\mby)
    \end{equation}
    or rather 
    \begin{equation}\label{eq:stationary orthogonality constraint}
        \iint_{\mathbb{R}^{2n}} d\mbx d\mby \; \mathcal{P}_c(\mbx,\mby;\kappa,\tau) = 0
    \end{equation}
    which follows immediately from the normalization of the stationary distribution.
    We claim that together, the two conditions 
    \begin{equation}\label{eq:linear expansion forward 2}
        \begin{split}
            (\mathcal{L}_x + \mathcal{L}_y) \mathscr{P}(\mbx,\mby) &= -\left(\tau \mathcal{L}_{\tau} + \kappa \mathcal{L}_\kappa \right) [P_0(\mbx) P_0(\mby)]
            \\
            \iint_{\mathbb{R}^{2n}} d\mbx d\mby \; \mathscr{P}(\mbx,\mby) &= 0
        \end{split}
    \end{equation}
    uniquely specify $\mathscr{P}(\mbx,\mby) = \mathcal{P}_c(\mbx,\mby;\kappa,\tau)$, i.e., that $k=0$.
    Indeed, note that
    \begin{equation}
        \begin{split}
            0&= \iint_{\mathbb{R}^{2n}} d\mbx d\mby \; \mathscr{P}(\mbx,\mby) 
            \\
            &= \iint_{\mathbb{R}^{2n}} d\mbx d\mby \; \mathcal{P}_c(\mbx,\mby;\kappa,\tau) + kP_0(\mbx)P_0(\mby)
            \\
            &=\iint_{\mathbb{R}^{2n}} d\mbx d\mby \; \mathcal{P}_c(\mbx,\mby;\kappa,\tau) + k\iint_{\mathbb{R}^{2n}} d\mbx d\mby\; P_0(\mbx)P_0(\mby)
            \\
            &= 0 + k\iint_{\mathbb{R}^{2n}} d\mbx d\mby\; P_0(\mbx)P_0(\mby)
            \\
            &= k
        \end{split}
    \end{equation}

    Thus the arbitrary degree of freedom arising from the nullspace of $\mathcal{L}_x + \mathcal{L}_y$ is fixed by \eqref{eq:stationary orthogonality constraint}; the solution to \eqref{eq:linear expansion forward 2} is unique.
\end{proof}

We remark that any nonzero scalar multiple of an eigenfunction is still an eigenfunction. 
Note that the normalization of the unperturbed stationary distribution \eqref{eq:SDE}, $P_0(\mbx)P_0(\mby)$, determines the normalization of $\mathcal{P}_c(\mbx,\mby;\kappa,\tau)$ by virtue of \eqref{eq:linear expansion forward}.
Moreover, note that to leading order, the perturbed stationary distribution (under re-parameterization) of $\eqref{eq:SDE}$, $\mathcal{P}_0(\mbx,\mby;\epsilon\kappa,\epsilon\tau)$, integrates to unity by virtue of \eqref{eq:stationary orthogonality constraint}.
Consequently, the first order correction, $\mathcal{P}_c(\mbx,\mby;\kappa,\tau)$, maintains normalization of the stationary distribution of the jointly coupled system.
Similarly, the backward eigenfunction corresponding to the zero eigenvalue is identically equal to unity, whether we consider a single oscillator, a system of two or more oscillators, whether identical or not and whether coupled or not.  Thus the first-order correction to the backward eigenfunction for the zero eigenvalue is identically zero.  

Appendix \ref{appendix:stationary eigenfunction perturbation} provides
an example computation for the leading order correction to a stationary distribution.

%We remark that it follows from the biorthogonality of the eigenfunctions that the corresponding backwards stationary eigenfunction remains unity to first order in $\epsilon$. 

\subsection{Perturbation of the Q-functions}\label{subsection:results_Q_functions}

Generically, one expects that nonzero perturbation $\kappa,\tau \neq 0$ will distort the unperturbed $Q$-functions of \eqref{eq:SDE}.
We therefore seek a description of the $Q$-function to leading order in both $\kappa$ and $\tau$.
Note that because the oscillators are identical when $\kappa=\tau=0$, the eigenvalue corresponding to these eigenfunctions is repeated with algebraic multiplicity two.
The geometric multiplicity of the eigenvalue is also two, because each oscillator contributes a linearly independent eigenfunction.
The $Q$-functions of the isolated oscillators satisfy
\begin{equation}
    \begin{split}
        \mathcal{L}_x^\dagger Q_1^*(\mbx) &= \lambda_1 Q_1^*(\mbx)
        \\
        \mathcal{L}_y^\dagger Q_1^*(\mby) &= \lambda_1 Q_1^*(\mby)
    \end{split}
\end{equation}
Because there are two $Q$-functions when $\kappa=\tau=0$, generically one expects perturbed versions of both when $\kappa,\tau\neq 0$.
We denote the perturbed $Q$-functions of the joint system as $\mathcal{Q}_+(\mbx,\mby;\kappa,\tau)$ and $\mathcal{Q}_-(\mbx,\mby;\kappa,\tau)$, with corresponding eigenvalues $\lambda_+(\kappa,\tau)$ and $\lambda_-(\kappa,\tau)$.
These eigenfunctions satisfy 
\begin{equation}
    \begin{split}
        &(\mathcal{L}_x^\dagger + \mathcal{L}_y^\dagger + \tau \mathcal{L}_{\tau}^\dagger + \kappa \mathcal{L}_{\kappa}^\dagger)\mathcal{Q}_+^*(\mbx,\mby;\kappa,\tau) =
        \\
        &\quad\lambda_+(\kappa,\tau)\mathcal{Q}_+^*(\mbx,\mby;\kappa,\tau)
        \\
        \\
        &(\mathcal{L}_x^\dagger + \mathcal{L}_y^\dagger + \tau \mathcal{L}_{\tau}^\dagger + \kappa \mathcal{L}_{\kappa}^\dagger)\mathcal{Q}_-^*(\mbx,\mby;\kappa,\tau) =
        \\
        &\quad\lambda_-(\kappa,\tau)\mathcal{Q}_-^*(\mbx,\mby;\kappa,\tau)
    \end{split}
\end{equation}
Analogous statements hold for the corresponding isolated forward eigenfunctions of the Fokker-Planck operator, $P_1(\mbx)$ and $P_1(\mby)$, along with the perturbed forward eigenfunctions of the joint system, $\mathcal{P}_+(\mbx,\mby;\kappa,\tau)$ and $\mathcal{P}_-(\mbx,\mby;\kappa,\tau)$.
Recall that we denote the $Q$-functions and corresponding forward eigenfunctions of the unperturbed $(\kappa=\tau=0)$ \textit{joint} system as $Q^*_{1x}(\mbx,\mby)$, $Q^*_{1y}(\mbx,\mby)$, $P_{1x}(\mbx,\mby)$, and $P_{1y}(\mbx,\mby)$ (see the notation established in \eqref{eq:notation}).

\begin{theorem}\label{theorem:Q function perturbation}
Assume the coupled oscillator criteria \ref{assumptions:coupled oscillator criteria} hold for a system of symmetrically coupled oscillators of the form \eqref{eq:SDE}.
Define the $2\times 2$ complex-valued matrix
\begin{widetext}
\begin{equation}\label{eq:M matrix}
  \mathcal{M}(\kappa,\tau) = \begin{bmatrix}
        \iint_{\mathbb{R}^{2n}} d\mbx d\mby \; P_{1x}(\mbx,\mby)[\tau \mathcal{L}_\tau^\dagger + \kappa \mathcal{L}_\kappa^\dagger]Q_{1x}^*(\mbx,\mby) & \iint_{\mathbb{R}^{2n}} d\mbx d\mby \; P_{1x}(\mbx,\mby)[\tau \mathcal{L}_\tau^\dagger + \kappa \mathcal{L}_\kappa^\dagger]Q_{1y}^*(\mbx,\mby)
        \\
        \iint_{\mathbb{R}^{2n}} d\mbx d\mby \; P_{1y}(\mbx,\mby)[\tau \mathcal{L}_\tau^\dagger + \kappa \mathcal{L}_\kappa^\dagger]Q_{1x}^*(\mbx,\mby) & \iint_{\mathbb{R}^{2n}} d\mbx d\mby \; P_{1y}(\mbx,\mby)[\tau \mathcal{L}_\tau^\dagger + \kappa \mathcal{L}_\kappa^\dagger]Q_{1y}^*(\mbx,\mby)
    \end{bmatrix}
\end{equation}
\end{widetext}

Then, for any fixed $(\kappa,\tau)\in\mathcal{K}\times \mathcal{T}$ for which $\mathcal{M}$ is diagonalizable with eigenpairs $(\lambda_{c\pm}(\kappa,\tau),v_{\pm}(\kappa,\tau))$, there exist unique\footnote{The uniqueness of $\mathcal{Q}_{c+}(\mbx,\mby;\kappa,\tau)$ and $\mathcal{Q}_{c-}(\mbx,\mby;\kappa,\tau)$ holds for a fixed, \textit{a-priori} normalization of the unperturbed $Q$ functions.
See the remark after the proof for a more detailed discussion of uniqueness.} functions $\mathcal{Q}_{c+}(\mbx,\mby;\kappa,\tau)$ and $\mathcal{Q}_{c-}(\mbx,\mby;\kappa,\tau)$ that satisfy

\begin{widetext}
\begin{equation}\label{eq:splitting_equation}
\begin{split}
    (\mathcal{L}_x^\dagger + \mathcal{L}_y^\dagger - \lambda_1)\mathcal{Q}_{c+}^*(\mbx,\mby;\kappa,\tau) &= - (\tau\mathcal{L}_{\tau}^\dagger+\kappa\mathcal{L}_\kappa^\dagger -\lambda_{c+}(\kappa,\tau))\Big( (v_+(\kappa,\tau))_1Q_{1x}^*(\mbx,\mby)+(v_+(\kappa,\tau))_2Q_{1y}^*(\mbx,\mby) \Big)
    \\
    (\mathcal{L}_x^\dagger + \mathcal{L}_y^\dagger - \lambda_1)\mathcal{Q}_{c-}^*(\mbx,\mby;\kappa,\tau) &= - (\tau\mathcal{L}_{\tau}^\dagger + \kappa\mathcal{L}_\kappa^\dagger -\lambda_{c-}(\kappa,\tau))\Big( (v_-(\kappa,\tau))_1Q_{1x}^*(\mbx,\mby)+(v_-(\kappa,\tau))_2Q_{1y}^*(\mbx,\mby) \Big) 
\end{split}
\end{equation}
\end{widetext}
where the functions $\mathcal{Q}^*_{c\pm}(\mbx,\mby;\kappa,\tau)$ satisfy the orthogonality conditions
\begin{equation}
    \begin{split}
        \mathcal{Q}_{c+}^*(\mbx,\mby;\kappa,\tau) &\perp P_{1x}(\mbx,\mby), P_{1y}(\mbx,\mby)
        \\
        \mathcal{Q}_{c-}^*(\mbx,\mby;\kappa,\tau) &\perp P_{1x}(\mbx,\mby), P_{1y}(\mbx,\mby)
    \end{split}
\end{equation}
The notation $(v_{\pm}(\kappa,\tau))_j$ is understood as the $j$th element of the vector $v_{\pm}(\kappa,\tau)$, for $j=1,2$.
Under the re-parameterization $(\kappa,\tau)\to(\epsilon\kappa,\epsilon\tau)$, the $Q$-functions and eigenvalues of the joint system \eqref{eq:SDE} are given to leading order in $\epsilon$ by
\begin{widetext}
\begin{equation}
\begin{split}
    \mathcal{Q}_+^*(\mbx,\mby;\epsilon\kappa,\epsilon\tau) &= (v_+(\kappa,\tau))_1Q_{1x}^*(\mbx,\mby)+(v_+(\kappa,\tau))_2Q_{1y}^*(\mbx,\mby) + \epsilon\mathcal{Q}_{c+}^*(\mbx,\mby;\kappa,\tau) + \mathcal{O}(\epsilon^2)
    \\
    \mathcal{Q}_-^*(\mbx,\mby;\epsilon\kappa,\epsilon\tau) &= (v_-(\kappa,\tau))_1Q_{1x}^*(\mbx,\mby)+(v_-(\kappa,\tau))_2Q_{1y}^*(\mbx,\mby) + \epsilon\mathcal{Q}_{c-}^*(\mbx,\mby;\kappa,\tau) + \mathcal{O}(\epsilon^2)
    \\
    \lambda_{+}(\epsilon\kappa,\epsilon\tau) &= \lambda_1 +  \epsilon\lambda_{c+}(\kappa,\tau) + \mathcal{O}(\epsilon^2)
    \\
    \lambda_{-}(\epsilon\kappa,\epsilon\tau) &= \lambda_1 +  \epsilon\lambda_{c-}(\kappa,\tau) + \mathcal{O}(\epsilon^2)
\end{split}
\end{equation}
\end{widetext}
\end{theorem}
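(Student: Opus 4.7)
My plan is to run a degenerate perturbation analysis at the operator level, pairing an ansatz for the eigenfunctions and eigenvalues with the Fredholm alternative (Theorem \ref{freddy}) applied at first order. Because $\lambda_1$ has algebraic and geometric multiplicity two at $\kappa=\tau=0$, with eigenspace $\mathrm{span}\{Q_{1x}^*, Q_{1y}^*\}$, the leading-order perturbed eigenfunction cannot be read off from the $\kappa=\tau=0$ problem alone; it is some a-priori unknown linear combination $a Q_{1x}^* + b Q_{1y}^*$. I would therefore posit $\mathcal{Q}_\pm^*(\mbx,\mby;\epsilon\kappa,\epsilon\tau) = a_\pm Q_{1x}^* + b_\pm Q_{1y}^* + \epsilon\, \mathcal{Q}_{c\pm}^*(\mbx,\mby;\kappa,\tau) + \mathcal{O}(\epsilon^2)$ together with $\lambda_\pm(\epsilon\kappa,\epsilon\tau) = \lambda_1 + \epsilon\,\lambda_{c\pm}(\kappa,\tau) + \mathcal{O}(\epsilon^2)$ and substitute into the re-parameterized SKO eigenvalue problem. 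The $\mathcal{O}(\epsilon^0)$ balance is automatic (it is just the unperturbed eigenvalue equation for the degenerate pair), while the $\mathcal{O}(\epsilon)$ balance rearranges into exactly equation \eqref{eq:splitting_equation}.

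Next I would determine $a_\pm, b_\pm, \lambda_{c\pm}$ by enforcing solvability of that $\mathcal{O}(\epsilon)$ equation. The operator $\mathcal{L}_x^\dagger + \mathcal{L}_y^\dagger - \lambda_1$ has null space $\mathrm{span}\{Q_{1x}^*, Q_{1y}^*\}$, so its formal adjoint $\mathcal{L}_x + \mathcal{L}_y - \lambda_1$ has null space $\mathrm{span}\{P_{1x}, P_{1y}\}$. By the Fredholm alternative, the right-hand side of \eqref{eq:splitting_equation} must be orthogonal to both $P_{1x}$ and $P_{1y}$. Using the bi-orthonormality relations $\langle Q_{1i}^*, P_{1j}\rangle = \delta_{ij}$ for $i,j\in\{x,y\}$ (which follow from the product structure \eqref{eq:notation}, bi-orthonormality of the single-oscillator modes, and normalization of the marginal stationary distributions), the two solvability conditions collapse to the $2\times 2$ eigenproblem $\mathcal{M}(\kappa,\tau)\,(a,b)^T = \lambda_c\,(a,b)^T$, whose matrix is exactly \eqref{eq:M matrix}. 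Diagonalizability of $\mathcal{M}$ then supplies the two required eigenpairs $(\lambda_{c\pm}, v_\pm)$, fixing $(a_\pm,b_\pm) = ((v_\pm)_1, (v_\pm)_2)$ and the first-order eigenvalue shifts.

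With $\lambda_{c\pm}$ and $v_\pm$ in hand, the right-hand side of \eqref{eq:splitting_equation} now satisfies the Fredholm orthogonality conditions, so a solution $\mathcal{Q}_{c\pm}^*$ exists. Uniqueness is handled exactly as in the proof of Lemma \ref{lemma:stationary distribution perturbation}: any two solutions differ by an element of $\mathrm{span}\{Q_{1x}^*, Q_{1y}^*\}$, and imposing the orthogonality constraints $\langle \mathcal{Q}_{c\pm}^*, P_{1x}\rangle = \langle \mathcal{Q}_{c\pm}^*, P_{1y}\rangle = 0$ together with bi-orthonormality forces both free coefficients in that null-space component to zero.

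The main obstacle I anticipate is careful bookkeeping rather than a single hard step. Two subtleties in particular warrant care: (i) verifying that the bi-orthonormality of the single-oscillator modes lifts cleanly to the product-form joint modes defined in \eqref{eq:notation}, which relies on the isolated stationary distribution being normalized and on a fixed, a-priori normalization of $Q_1^*$ (the clause flagged in the theorem's footnote); and (ii) confirming that the $\mathcal{C}^1(\epsilon)$ hypothesis of Assumption \ref{assumptions:coupled oscillator criteria} is strong enough to justify term-by-term matching in $\epsilon$ and to rule out the non-analytic (e.g.\ Puiseux-type) branches that can otherwise arise when a multiplicity-two eigenvalue splits under perturbation. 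Beyond these, the calculation is a direct application of the Fredholm alternative.
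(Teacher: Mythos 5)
Your proposal is correct and mirrors the paper's own proof essentially step for step: both insert the degenerate ansatz $a Q_{1x}^* + b Q_{1y}^* + \epsilon\,\mathcal{Q}_c^* + \mathcal{O}(\epsilon^2)$ into the re-parameterized backward eigenvalue equation, reduce the first-order solvability conditions via the Fredholm alternative and bi-orthonormality to the $2\times 2$ eigenproblem for $\mathcal{M}$, and fix uniqueness of $\mathcal{Q}_{c\pm}^*$ by imposing orthogonality to $P_{1x}$ and $P_{1y}$ exactly as in Lemma~\ref{lemma:stationary distribution perturbation}. The subtleties you flag (lifting bi-orthonormality to the product modes, and relying on the $\mathcal{C}^1$-in-$\epsilon$ hypothesis to justify the expansion and exclude fractional-power branches) are precisely those the paper handles via the normalization conventions in \eqref{eq:notation} and the coupled-oscillator criteria of Assumption~\ref{assumptions:coupled oscillator criteria}.
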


\begin{proof}
 Fix $(\kappa,\tau)\in \mathcal{K}\times \mathcal{T}$, and consider the re-parameterization $(\kappa,\tau) \to (\epsilon \kappa, \epsilon \tau)$ for $|\epsilon|\leq 1$.
By assumption, an expansion of the form
\begin{equation}\label{eq:ansatz backward Q}
        \begin{split}
            \mathcal{Q}^*(\mbx,\mby;\epsilon\kappa,\epsilon\tau) &= c_1Q_{1x}^*(\mbx,\mby)+c_2Q_{1y}^*(\mbx,\mby) + 
            \\
            &\quad\epsilon \mathcal{Q}_c^*(\mbx,\mby;\kappa,\tau) + \mathcal{O}(\epsilon^2)
            \\
            \lambda(\epsilon\kappa,\epsilon\tau) &= \lambda_1 + \epsilon \lambda_c + \mathcal{O}(\epsilon^2)
        \end{split}
    \end{equation}
    is guaranteed to exist.
    Here, $c_1$, $c_2$, and $\lambda_c$ are unknown constants (possibly depending on $\kappa$ and $\tau$) to be determined in the following.
    Insertion of the ansatz \eqref{eq:ansatz backward Q} into the backward equation
    \begin{equation}
    \begin{split}
        &(\mathcal{L}_x^\dagger + \mathcal{L}_y^\dagger + \epsilon\tau\mathcal{L}_{\tau}^\dagger+ \epsilon\kappa\mathcal{L}_{\kappa}^\dagger)\mathcal{Q}^*(\mbx,\mby;\epsilon\kappa,\epsilon\tau) 
        \\
        &\quad- \lambda(\epsilon\kappa,\epsilon\tau) \mathcal{Q}^*(\mbx,\mby;\epsilon\kappa,\epsilon\tau) = 0
    \end{split}
    \end{equation}
    gives, to first order in $\epsilon$
    \begin{equation}\label{eq:Q function linear equation}
    \begin{split}
    \\
        &(\mathcal{L}_x^\dagger + \mathcal{L}_y^\dagger - \lambda_1)\mathcal{Q}^*_c(\mbx,\mby;\kappa,\tau) = 
        \\
        &\;-(\tau\mathcal{L}_{\tau}^\dagger +\kappa\mathcal{L}_{\kappa}^\dagger - \lambda_c)\Big[c_1Q_{1x}^*(\mbx,\mby)+c_2Q_{1y}^*(\mbx,\mby) \Big]
    \end{split}
    \end{equation}
    Note that the nullspace of the adjoint of $\mathcal{L}_x^\dagger + \mathcal{L}_y^\dagger - \lambda_1$ is spanned by the forward eigenfunctions of the respective oscillators, $P_{1x}(\mbx,\mby)$ and $P_{1y}(\mbx,\mby)$.
    The Fredholm alternative (Theorem \ref{freddy}) establishes solvability criteria
    \begin{equation}
    \begin{split}
        -(\tau\mathcal{L}_{\tau}^\dagger +\kappa\mathcal{L}_{\kappa}^\dagger - \lambda_c)\Big[c_1Q_{1x}^*(\mbx,\mby)+c_2Q_{1y}^*(\mbx,\mby) \Big] &\perp P_{1x}(\mbx,\mby)
        \\
        -(\tau\mathcal{L}_{\tau}^\dagger +\kappa\mathcal{L}_{\kappa}^\dagger - \lambda_c)\Big[c_1Q_{1x}^*(\mbx,\mby)+c_2Q_{1y}^*(\mbx,\mby) \Big] &\perp P_{1y}(\mbx,\mby)
    \end{split}
    \end{equation}
    Written explicitly, and using the biorthogonality of the eigenfunctions, one has that
    \begin{widetext}
    \begin{equation}
    \begin{split}
        c_1 \lambda_{c} &= \iint_{\mathbb{R}^{2n}} d\mbx d\mby \; P_{1x}(\mbx,\mby)(\tau\mathcal{L}_{\tau}^\dagger +\kappa\mathcal{L}_{\kappa}^\dagger)[c_1 Q_{1x}^*(\mbx,\mby) + c_2 Q_{1y}^*(\mbx,\mby)]
        \\
        c_2 \lambda_{c} &= \iint_{\mathbb{R}^{2n}} d\mbx d\mby \; P_{1y}(\mbx,\mby)(\tau\mathcal{L}_{\tau}^\dagger +\kappa\mathcal{L}_{\kappa}^\dagger)  [c_1 Q_{1x}^*(\mbx,\mby) + c_2 Q_{1y}^*(\mbx,\mby)]
    \end{split}
\end{equation}
\end{widetext}
which can be expressed as a linear system 
\begin{widetext}
\begin{equation}\label{eq:matrix_equation}
    \lambda_c \begin{bmatrix}
        c_1 \\ c_2
    \end{bmatrix} = \begin{bmatrix}
        \iint_{\mathbb{R}^{2n}} d\mbx d\mby \; P_{1x}(\mbx,\mby)[\tau \mathcal{L}_\tau^\dagger + \kappa \mathcal{L}_\kappa^\dagger]Q_{1x}^*(\mbx,\mby) & \iint_{\mathbb{R}^{2n}} d\mbx d\mby \; P_{1x}(\mbx,\mby)[\tau \mathcal{L}_\tau^\dagger + \kappa \mathcal{L}_\kappa^\dagger]Q_{1y}^*(\mbx,\mby)
        \\
        \iint_{\mathbb{R}^{2n}} d\mbx d\mby \; P_{1y}(\mbx,\mby)[\tau \mathcal{L}_\tau^\dagger + \kappa \mathcal{L}_\kappa^\dagger]Q_{1x}^*(\mbx,\mby) & \iint_{\mathbb{R}^{2n}} d\mbx d\mby \; P_{1y}(\mbx,\mby)[\tau \mathcal{L}_\tau^\dagger + \kappa \mathcal{L}_\kappa^\dagger]Q_{1y}^*(\mbx,\mby)
    \end{bmatrix} \begin{bmatrix}
        c_1 \\ c_2
    \end{bmatrix}
\end{equation}
\end{widetext}
or rather, by the definition of $\mathcal{M}$,
\begin{equation}
     \mathcal{M}(\kappa,\tau)\begin{bmatrix}
        c_1 \\ c_2
    \end{bmatrix} = \lambda_c \begin{bmatrix}
        c_1 \\ c_2
    \end{bmatrix}
\end{equation}
Note that \eqref{eq:matrix_equation} determines the values of $\lambda_c$, $c_1$, and $c_2$.
By assumption, $\mathcal{M}$, is diagonalizable.
Consequently, $\mathcal{M}$ has eigenpairs $(\lambda_{c\pm}(\kappa,\tau),v_{\pm}(\kappa,\tau))$.
The eigenvalues $\lambda_{c\pm}(\kappa,\tau)$ are the leading order corrections to the unperturbed $Q$-function eigenvalues, and the entries of the eigenvectors $v_{\pm}(\kappa,\tau)$ correspond to the weights of the unperturbed $Q$-functions.
The existence of these eigenvalues and eigenfunctions guarantees that \eqref{eq:Q function linear equation} has a solution.

However, we remark that the leading order corrections $\mathcal{Q}_{c+}^*(\mbx,\mby;\kappa,\tau)$ and $\mathcal{Q}_{c-}^*(\mbx,\mby;\kappa,\tau)$ need not be unique, as any solution of the form $\mathscr{Q}^*(\mbx,\mby) = \mathcal{Q}_{c+}^*(\mbx,\mby;\kappa,\tau) + k_1 Q_{1x}^*(\mbx,\mby) + k_2Q_{1y}^*(\mbx,\mby)$ for $k_1,k_2\in \mathbb{C}$ also satisfies \eqref{eq:Q function linear equation} (and similarly for $\mathcal{Q}_{c-}^*(\mbx,\mby;\kappa,\tau)$).
We establish uniqueness of $\mathcal{Q}_{c+}^*(\mbx,\mby;\kappa,\tau)$; the argument for $\mathcal{Q}_{c-}^*(\mbx,\mby;\kappa,\tau)$ is analogous.
To establish uniqueness, we impose the conditions
\begin{equation}\label{eq:constraint Q}
        \mathcal{Q}_{c+}^*(\mbx,\mby;\kappa,\tau) \perp P_{1x}(\mbx,\mby), \quad \mathcal{Q}_{c+}^*(\mbx,\mby;\kappa,\tau) \perp P_{1y}(\mbx,\mby) 
    \end{equation}
    or rather 
    \begin{equation}\label{eq:Q orthogonality constraint}
    \begin{split}
        &\iint_{\mathbb{R}^n} d\mbx d\mby \; \mathcal{Q}_{c+}(\mbx,\mby;\kappa,\tau)P_{1x}(\mbx,\mby) = 0
        \\
        &\iint_{\mathbb{R}^n} d\mbx d\mby \; \mathcal{Q}_{c+}(\mbx,\mby;\kappa,\tau)P_{1y}(\mbx,\mby) = 0
    \end{split}  
    \end{equation}
    We claim  that together, the conditions
    \begin{equation}\label{eq:linear expansion forward Q}
        \begin{split}
            &(\mathcal{L}_x^\dagger + \mathcal{L}_y^\dagger - \lambda_1) \mathscr{Q}^*(\mbx,\mby) = 
            \\
            &\;-(\tau\mathcal{L}_{\tau}^\dagger + \kappa\mathcal{L}_\kappa^\dagger - \lambda_c) [c_1Q_{1x}^*(\mbx,\mby) +  c_2Q_{1y}^*(\mbx,\mby)]
            \\
            &\iint_{\mathbb{R}^{2n}} d\mbx d\mby \; \mathscr{Q}^*(\mbx,\mby) P_{1x}(\mbx,\mby) = 0
            \\
            &\iint_{\mathbb{R}^{2n}} d\mbx d\mby \; \mathscr{Q}^*(\mbx,\mby) P_{1y}(\mbx,\mby) = 0
        \end{split}
    \end{equation}
    uniquely specify $\mathscr{Q}^*(\mbx,\mby) = \mathcal{Q}_{c+}^*(\mbx,\mby;\kappa,\tau)$, i.e., that $k_1=k_2=0$.
    Indeed, note that
    \begin{equation}
        \begin{split}
            0&= \iint_{\mathbb{R}^{2n}} d\mbx d\mby \; \mathscr{Q}^*(\mbx,\mby) P_{1x}(\mbx,\mby)
            \\
            &=\iint_{\mathbb{R}^{2n}} d\mbx d\mby \; \mathcal{Q}_{c+}^*(\mbx,\mby;\kappa,\tau) P_{1x}(\mbx,\mby) +
            \\
            &\quad k_1\iint_{\mathbb{R}^{2n}} d\mbx d\mby\; Q_{1x}^*(\mbx,\mby)P_{1x}(\mbx,\mby)
            \\
            &\quad+ k_2\iint_{\mathbb{R}^{2n}} d\mbx d\mby\; Q_{1y}^*(\mbx,\mby)P_{1x}(\mbx,\mby)
            \\
            &= 0 + k_1\iint_{\mathbb{R}^{2n}} d\mbx d\mby\; Q_{1x}^*(\mbx,\mby)P_{1x}(\mbx,\mby) + 0
            \\
            &= k_1
        \end{split}
    \end{equation}
    Consideration of the second constraint $\iint_{\mathbb{R}^n} d\mbx d\mby \; \mathscr{Q}^*(\mbx,\mby) P_{1y}(\mbx,\mby) = 0$ shows that $k_2=0$.
    One concludes that the arbitrary degrees of freedom arising from the nullspace of $\mathcal{L}_x^\dagger + \mathcal{L}_y^\dagger - \lambda_1$ are fixed by \eqref{eq:Q orthogonality constraint}; the solution to \eqref{eq:linear expansion forward Q} is unique.
\end{proof}

Note that the normalization of the unperturbed $Q$-functions of \eqref{eq:SDE} determine the normalization of the first order update $\mathcal{Q}^*_c(\mbx,\mby;\kappa,\tau)$ by virtue of \eqref{eq:Q function linear equation}.
In this manuscript, we enforce the condition that the $Q$-functions have unit variance, which uniquely specifies their magnitude \cite{perez2023universal}.
However, the unperturbed $Q$-functions are only defined up to a complex rotation,
%However, the rotation of the unperturbed $Q$-functions is ambiguous, 
i.e., $Q_{1x}^*(\mbx,\mby)e^{i\xi}$ and $Q_{1y}^*(\mbx,\mby)e^{i\Xi}$ are still $Q$-functions of the unperturbed system \eqref{eq:SDE} for any $\xi,\Xi\in [0,2\pi)$.
Therefore, the results of Theorem \ref{theorem:Q function perturbation} should be interpreted in the following sense: the first order update $\mathcal{Q}_c^*(\mbx,\mby;\kappa,\tau)$ is unique given a fixed \textit{a priori} choice of $\xi$ and $\Xi$.
That is, the leading order update, $\mathcal{Q}_c^*(\mbx,\mby;\kappa,\tau)$, is unique up to rotation in the complex plane, and is determined by the choice of rotation of the unperturbed $Q$-functions.
We remark that this ambiguity in the ``phase" of the $Q$-function is directly analogous to the ambiguity of the asymptotic phase function $\Theta(\mbx)$ of a classical oscillator \eqref{eq:oscillator}
which is only defined up to a constant phase shift.  
We discuss this issue further in
\S \ref{ssec:PowerSpectra_and_CrossSpectralDensity}.

The following corollary shows that in the case of identical symmetrically coupled oscillators ($\tau=0$ in \eqref{eq:SDE}), the results of Theorem \ref{theorem:Q function perturbation} take on a particularly simple form.
In particular, exact expressions are available for the leading order correction to the eigenvalues.
Moreover, we show that the eigenfunctions of the perturbed system are related to the sum and difference of the unperturbed eigenfunctions.

\begin{corollary}\label{corollary:id osci}
    Assume the coupled oscillator criteria \ref{assumptions:coupled oscillator criteria} hold for a system of identical symmetrically coupled oscillators of the form \eqref{eq:SDE} (with $\tau=0$). 
Define
\begin{equation}\label{eq:alpha_annd_beta}
\begin{split}
    \alpha &\equiv \iint_{\mathbb{R}^{2n}} d\mbx d\mby \; P_{1x}(\mbx,\mby)(\mathcal{L}_{\kappa}^\dagger Q_{1x}^*(\mbx,\mby))
    \\
    \beta &\equiv \iint_{\mathbb{R}^{2n}} d\mbx d\mby \; P_{1x}(\mbx,\mby)(\mathcal{L}_{\kappa}^\dagger Q_{1y}^*(\mbx,\mby))
\end{split}
\end{equation}
By virtue of the assumed symmetry, we now have
\begin{equation}
    \mathcal{M} = \begin{bmatrix}
        \alpha & \beta \\ \beta & \alpha
    \end{bmatrix}
\end{equation}
There are two cases:
\begin{enumerate}
    \item (No splitting) Suppose $\beta=0$, 
Then, for all $\kappa\in \mathcal{K}$, there exist constants $\lambda_{c+}$ and $\lambda_{c-}$, and unique functions $\mathcal{Q}_{c+}(\mbx,\mby)$ and $\mathcal{Q}_{c-}(\mbx,\mby)$ that satisfy
\begin{equation}
\begin{split}
    (\mathcal{L}_x^\dagger + \mathcal{L}_y^\dagger - \lambda_1)\mathcal{Q}_{c+}^*(\mbx,\mby) &= - (\mathcal{L}_\kappa^\dagger -\lambda_{c+}) Q_{1x}^*(\mbx,\mby)
    \\
    (\mathcal{L}_x^\dagger + \mathcal{L}_y^\dagger - \lambda_1)\mathcal{Q}_{c-}^*(\mbx,\mby) &= - (\mathcal{L}_\kappa^\dagger -\lambda_{c-})Q_{1y}^*(\mbx,\mby)
    \\
    \lambda_{c+} &= \alpha
    \\
    \lambda_{c-} &= \alpha
\end{split}
\end{equation}
along with orthogonality conditions
\begin{equation}
    \begin{split}
        \mathcal{Q}_{c+}^*(\mbx,\mby) &\perp P_{1x}(\mbx,\mby) , P_{1y}(\mbx,\mby) 
        \\
        \mathcal{Q}_{c-}^*(\mbx,\mby) &\perp P_{1x}(\mbx,\mby) , P_{1y}(\mbx,\mby) 
    \end{split}
\end{equation}
The $Q$-functions and eigenvalues of the joint system \eqref{eq:SDE} are given to first order in $\kappa$ by
\begin{equation}\label{eq:underline 1}
\begin{split}
    Q_+^*(\mbx,\mby) &= \underline{Q_{1x}^*(\mbx,\mby)} + \kappa \mathcal{Q}_{c+}^*(\mbx,\mby) + \mathcal{O}(\kappa^2)
    \\
    Q_-^*(\mbx,\mby) &= \underline{Q_{1y}^*(\mbx,\mby)} + \kappa \mathcal{Q}_{c-}^*(\mbx,\mby) + \mathcal{O}(\kappa^2)
    \\
    \lambda_{+} &= \lambda_1 + \kappa \lambda_{c+} + \mathcal{O}(\kappa^2)
    \\
    \lambda_{-} &= \lambda_1 + \kappa \lambda_{c-} + \mathcal{O}(\kappa^2)
\end{split}
\end{equation}
    \item (Splitting) Suppose $\beta \neq 0$.
    Then, for all $\kappa\in \mathcal{K}$, there exist constants $ \lambda_{c+}$ and $\lambda_{c-}$, and unique functions $\mathcal{Q}_{c+}^*(\mbx,\mby)$ and $\mathcal{Q}_{c-}^*(\mbx,\mby)$ that satisfy
    \begin{equation}
\begin{split}
    &(\mathcal{L}_x^\dagger + \mathcal{L}_y^\dagger - \lambda_1)\mathcal{Q}_{c+}^*(\mbx,\mby) = 
    \\
    &\;- (\mathcal{L}_\kappa^\dagger -\lambda_{c+})\Big( Q_{1x}^*(\mbx,\mby)+Q_{1y}^*(\mbx,\mby) \Big)
    \\
    &(\mathcal{L}_x^\dagger + \mathcal{L}_y^\dagger - \lambda_1)\mathcal{Q}_{c-}^*(\mbx,\mby) = 
    \\
    &- (\mathcal{L}_\kappa^\dagger -\lambda_{c-})\Big( Q_{1x}^*(\mbx,\mby)-Q_{1y}^*(\mbx,\mby) \Big)
    \\
    &\lambda_{c+} = \alpha + \beta
    \\
    &\lambda_{c-} = \alpha - \beta
\end{split}
\end{equation}
along with the orthogonality conditions
\begin{equation}
    \begin{split}
        \mathcal{Q}_{c+}^*(\mbx,\mby) &\perp P_{1x}(\mbx,\mby) , P_{1y}(\mbx,\mby) 
        \\
        \mathcal{Q}_{c-}^*(\mbx,\mby) &\perp P_{1x}(\mbx,\mby) , P_{1y}(\mbx,\mby) 
    \end{split}
\end{equation}
    The $Q$-functions and eigenvalues of the joint system \eqref{eq:SDE} are given to first order in $\kappa$ by
\begin{equation}\label{eq:underline 2}
\begin{split}
    \mathcal{Q}_+^*(\mbx,\mby) &= \underline{Q_{1x}^*(\mbx,\mby)+Q_{1y}^*(\mbx,\mby)} + \kappa \mathcal{Q}_{c+}^*(\mbx,\mby) + \mathcal{O}(\kappa^2)
    \\
    \mathcal{Q}_-^*(\mbx,\mby) &= \underline{Q_{1x}^*(\mbx,\mby)-Q_{1y}^*(\mbx,\mby)} + \kappa \mathcal{Q}_{c-}^*(\mbx,\mby) + \mathcal{O}(\kappa^2)
    \\
    \lambda_{+} &= \lambda_1 + \kappa \lambda_{c+} + \mathcal{O}(\kappa^2)
    \\
    \lambda_{-} &= \lambda_1 + \kappa \lambda_{c-} + \mathcal{O}(\kappa^2)
    \\
    \\
\end{split}
\end{equation}
\end{enumerate}
\end{corollary}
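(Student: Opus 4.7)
The plan is to deduce the corollary as a direct specialization of Theorem \ref{theorem:Q function perturbation} to the identical, symmetrically coupled case ($\tau=0$, $\mbg_1=\mbg_2$), where the structural symmetries of the matrix $\mathcal{M}$ from \eqref{eq:M matrix} make its spectrum explicitly computable. With $\tau=0$, only the $\kappa \mathcal{L}_\kappa^\dagger$ contribution survives inside each entry of $\mathcal{M}$, so the matrix reduces to
\begin{equation}
\mathcal{M}(\kappa,0) = \kappa \begin{bmatrix} m_{11} & m_{12} \\ m_{21} & m_{22} \end{bmatrix},
\end{equation}
where $m_{11} = \alpha$ and $m_{12} = \beta$ by the definitions \eqref{eq:alpha_annd_beta}. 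The first step will be to show $m_{22}=\alpha$ and $m_{21}=\beta$.

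The key observation driving this symmetry step is that under the identical-oscillator assumption, the map $(\mbx,\mby)\mapsto(\mby,\mbx)$ exchanges $P_{1x}\leftrightarrow P_{1y}$ and $Q_{1x}^*\leftrightarrow Q_{1y}^*$ as defined in \eqref{eq:notation}, while preserving the form $\mbh(\mbX,\mbY),\mbh(\mbY,\mbX)$ of the coupling (hence the operator $\mathcal{L}_\kappa^\dagger$ is invariant under this exchange). Performing this change of variables in the integrals defining $m_{22}$ and $m_{21}$ sends them to $\alpha$ and $\beta$ respectively, yielding $\mathcal{M} = \kappa\begin{bmatrix}\alpha & \beta\\ \beta & \alpha\end{bmatrix}$. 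I would treat this symmetry argument as the technical heart of the proof; the rest is linear algebra.

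Once the symmetric form of $\mathcal{M}$ is in hand, I split into the two advertised cases. In Case 1 ($\beta=0$), $\mathcal{M}$ is the scalar multiple $\kappa\alpha I$, which is trivially diagonalizable and has a repeated eigenvalue $\lambda_{c+}=\lambda_{c-}=\alpha$. One may choose any basis of $\mathbb{C}^2$ as eigenvectors; the canonical choice $v_+=(1,0)^\top,\, v_-=(0,1)^\top$ plugged into Theorem \ref{theorem:Q function perturbation} immediately yields the equations and expansions \eqref{eq:underline 1}. In Case 2 ($\beta\neq 0$), a direct computation gives eigenpairs $(\alpha+\beta,(1,1)^\top)$ and $(\alpha-\beta,(1,-1)^\top)$; diagonalizability follows because the eigenvalues are distinct. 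Substituting these into Theorem \ref{theorem:Q function perturbation} produces the leading-order sum/difference eigenfunctions $Q_{1x}^*\pm Q_{1y}^*$ and the split eigenvalues $\lambda_1 + \kappa(\alpha\pm\beta)$, together with the defining equations for $\mathcal{Q}_{c\pm}^*$ in \eqref{eq:underline 2}.

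The orthogonality conditions $\mathcal{Q}_{c\pm}^*\perp P_{1x},P_{1y}$ and uniqueness of $\mathcal{Q}_{c\pm}^*$ are inherited verbatim from Theorem \ref{theorem:Q function perturbation} and require no further work. The only genuine obstacle I anticipate is bookkeeping around the $\mbx\leftrightarrow\mby$ symmetry: one must verify that the definitions \eqref{eq:notation} of $P_{1x},P_{1y},Q_{1x}^*,Q_{1y}^*$ together with the symmetric coupling interact correctly under the change of variables so that $\mathcal{L}_\kappa^\dagger$ is indeed invariant. Once that is justified, the rest of the corollary reduces to the two-by-two eigenvalue problem for a matrix of the form $\begin{bmatrix}\alpha&\beta\\\beta&\alpha\end{bmatrix}$.
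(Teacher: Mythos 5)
Your proof is correct and follows essentially the same route as the paper's: specialize Theorem \ref{theorem:Q function perturbation} to $\tau=0$, use the identical-oscillator symmetry to show $\mathcal{M}$ has the form $\kappa\begin{bmatrix}\alpha & \beta\\\beta & \alpha\end{bmatrix}$, and then diagonalize. The one stylistic difference is that you obtain $m_{21}=\beta$, $m_{22}=\alpha$ by an explicit change of variables $(\mbx,\mby)\mapsto(\mby,\mbx)$ under which $\mathcal{L}_\kappa^\dagger$ is invariant, whereas the paper writes out $\mathcal{L}_\kappa^\dagger Q_{1x}^*$ and $\mathcal{L}_\kappa^\dagger Q_{1y}^*$ directly and appeals to the oscillators having identical eigenfunctions in their respective variables; these are two phrasings of the same observation, and both yield the same symmetric $\mathcal{M}$ and the same downstream linear algebra.
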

We remark that, when $\beta=0$, the unperturbed eigenfunctions of the joint system are the eigenfunctions of the isolated oscillators.
In this case, the $Q$-function eigenvalues do not split.
However, when $\beta\neq 0$, the unperturbed eigenfunctions of the joint system are a particular linear combination (the sum and difference) of the eigenfunctions of the isolated oscillators.
In this case, the $Q$-function eigenvalues do split.
We highlight this distinction by underlining the unperturbed eigenfunction terms in equations \eqref{eq:underline 1} and \eqref{eq:underline 2}.

\begin{proof}
    We follow the proof of Theorem \ref{theorem:Q function perturbation} to the derivation of the matrix equation involving $\mathcal{M}$, which, because $\tau=0$, reduces to the form
    \begin{widetext}
\begin{equation}\label{eq:matrix_equation_id}
    \mathcal{M} = \kappa \begin{bmatrix}
        \iint_{\mathbb{R}^{2n}} d\mbx d\mby \; P_{1x}(\mbx,\mby)(\mathcal{L}_{\kappa}^\dagger Q_{1x}^*(\mbx,\mby)) & \iint_{\mathbb{R}^{2n}} d\mbx d\mby \; P_{1x}(\mbx,\mby)(\mathcal{L}_{\kappa}^\dagger Q_{1y}^*(\mbx,\mby))
        \\
        \iint_{\mathbb{R}^{2n}} d\mbx d\mby \; P_{1y}(\mbx,\mby)(\mathcal{L}_{\kappa}^\dagger Q_{1x}^*(\mbx,\mby)) & \iint_{\mathbb{R}^{2n}} d\mbx d\mby \; P_{1y}(\mbx,\mby)(\mathcal{L}_{\kappa}^\dagger Q_{1y}^*(\mbx,\mby))
    \end{bmatrix}
\end{equation}
\end{widetext}    
    
Notice that the top row of $\mathcal{M}$ is given by $[\alpha \;\beta]$.
We now explicitly verify that the bottom row is given by $[\beta \; \alpha]$, which follows from the assumption of symmetric coupling.
Writing out the backward coupling operator, one sees that
\begin{equation}
\begin{split}
    \mathcal{L}_{\kappa}^\dagger \mathcal{Q}_{1x}^*(\mbx,\mby)  &= \mbh(\mbx,\mby) \cdot \nabla_x Q_{1x}^*(\mbx,\mby)   + \mbh(\mby,\mbx) \cdot \nabla_y Q_{1x}^*(\mbx,\mby)  
    \\
    &= \mbh(\mbx,\mby) \cdot \nabla_x Q_{1x}^*(\mbx,\mby) 
    \\
    \\
    \mathcal{L}_{\kappa}^\dagger \mathcal{Q}_{1y}^*(\mbx,\mby) &= \mbh(\mbx,\mby) \cdot \nabla_x Q_{1y}^*(\mbx,\mby)  + \mbh(\mby,\mbx) \cdot \nabla_y Q_{1y}^*(\mbx,\mby) 
    \\
    &= \mbh(\mby,\mbx) \cdot \nabla_y Q_{1y}^*(\mbx,\mby)
\end{split}
\end{equation}
where we recall the definitions of the unperturbed $Q$-functions of the joint system \eqref{eq:notation}.
Then, because the oscillators are identical and have the same forward and backward eigenfunctions in their respective variables, one sees that 
\begin{equation}
\begin{split}
 \alpha &= \iint_{\mathbb{R}^{2n}} d\mbx d\mby \; P_{1x}(\mbx,\mby)(\mathcal{L}_{\kappa}^\dagger Q_{1x}^*(\mbx,\mby)) 
 \\
&= \iint_{\mathbb{R}^{2n}} d\mbx d\mby \; P_{1y}(\mbx,\mby) (\mathcal{L}_{\kappa}^\dagger Q_{1y}^*(\mbx,\mby))
\\
\\
\beta &= \iint_{\mathbb{R}^{2n}} d\mbx d\mby \; P_{1x}(\mbx,\mby)(\mathcal{L}_{\kappa}^\dagger Q_{1y}^*(\mbx,\mby)) 
\\
&= \iint_{\mathbb{R}^{2n}} d\mbx d\mby \; P_{1y}(\mbx,\mby)(\mathcal{L}_{\kappa}^\dagger Q_{1x}^*(\mbx,\mby))
\end{split}
\end{equation}
Therefore, the matrix equation \eqref{eq:matrix_equation_id} reduces to the system
\begin{equation}
     \kappa\begin{bmatrix}
        \alpha & \beta \\ \beta & \alpha
    \end{bmatrix}\begin{bmatrix}
        c_1 \\ c_2
    \end{bmatrix} = \lambda_c \begin{bmatrix}
        c_1 \\ c_2
    \end{bmatrix} 
\end{equation}

\textbf{Case 1:}
Suppose that $\beta=0$, then $\mathcal{M}$ has two eigenvectors
\begin{equation}
\begin{split}
    \lambda_{c+}v_+ &= \mathcal{M}v_+
    \\
    \lambda_{c-}v_- &= \mathcal{M}v_-
\end{split}
\end{equation}
where $v_+ = \begin{bmatrix}
    1 & 0
\end{bmatrix}$ and $v_- = \begin{bmatrix}
    0 & 1
\end{bmatrix}$
and $\lambda_{c+} = \kappa\alpha$ and $\lambda_{c-}=\kappa\alpha$.
Notice that, in this case, $\mathcal{M}$ is a multiple of the identity matrix, and so any nonzero vector is an eigenvector.
Here, we choose $v_{\pm}$ from a family of possible linear combinations of vectors when $\beta=0$.
However, when $\beta\neq 0$, we will see that the eigenvectors $v_{\pm}$ are specified as a particular linear combination of vectors.

\textbf{Case 2:} 
Suppose that $\beta\neq0$.
Then, $\mathcal{M}$ has two eigenvectors
\begin{equation}
\begin{split}
    \lambda_{c+}v_+ &= \mathcal{M}v_+
    \\
    \lambda_{c-}v_- &= \mathcal{M}v_-
\end{split}
\end{equation}
where $v_+ = \begin{bmatrix}
    1 & 1
\end{bmatrix}$ and $v_- = \begin{bmatrix}
    1 & -1
\end{bmatrix}$
and $\lambda_{c+} = \kappa(\alpha+\beta)$ and $\lambda_{c-}=\kappa(\alpha-\beta)$.

To conclude, we remark that uniqueness of the leading order correction to the eigenfunctions follows from the proof of Theorem \ref{theorem:Q function perturbation}.
\end{proof}

The fact that the perturbed $Q$-functions of the joint system of identical coupled oscillators go as the sum and difference of the unperturbed $Q$-functions bears some remark.
To gain physical intuition, consider two ideal mass-spring systems with equal length and mass that are coupled with a spring
\begin{equation}\label{eq:normal_mode_system}
    \begin{split}
        mx'' + m \omega_0^2 x &= -k(x-y)
        \\
        m y'' + m \omega_0^2 y &= -k(y-x)
    \end{split}
\end{equation}
for constants $m$, $\omega_0$, and $k$.
Let $q=y+x$ and $p=y-x$.
Then, 
\begin{equation}
    \begin{split}
        mq'' + m\omega_0^2 q &= 0
        \\
        mp'' + (m\omega_0^2+2k)p &=0
    \end{split}
\end{equation}
The resulting equations both describe the motion of ideal harmonic oscillators, but have different physical interpretations.
The dynamics of $q$ may be interpreted as the movement of the center of mass of the system.
If the two masses were initially displaced the same distance in the same direction, we would have $p=0$.
In this situation, the two masses oscillate as one unit, i.e., the distance between them stays the same.
On the other hand, the dynamics of $p$ capture anti-phase oscillations. 
In a situation where the two masses are initially displaced in equal but opposite directions, we would have $q=0$.
The two masses again oscillate as one unit, but swing in opposite directions.
These two solutions are sometimes referred to as the system's \textit{normal modes}, and linear combinations of these solutions can describe dynamics with non-symmetric initial conditions.
See (\cite{waltman2004second}, \S 1.12) for a detailed analysis of coupled pendulums.

We interpret the structure of the perturbed $Q$-functions of the identical joint system as normal modes for coupled stochastic oscillators, and argue that this structure provides physical intuition for `synchronization' in the case of stochastic oscillators.
In the case when $\tau=0$ and $\beta\neq 0$, the observable $\mathcal{Q}_+^*(\mbx,\mby)$ captures dynamics analogous to the ``center of mass'' mode, and $\mathcal{Q}_-^*(\mbx,\mby)$ captures dynamics analogous to the ``anti-phase mode''.

 It is instructive to consider the form of the constants $\alpha$ and $\beta$.
Notice that 
\begin{equation}
\begin{split}
    \mathcal{L}_\kappa^\dagger Q_{1y}^*(\mbx,\mby) &= \mbh(\mbx,\mby) \partial_x Q_{1y}^*(\mbx,\mby) + \mbh(\mby,\mbx) \partial_y Q_{1y}^*(\mbx,\mby)
    \\
    &= \left(\nabla Q_{1y}^*(\mbx,\mby)\right)^T\begin{bmatrix}
        \mbh(\mbx,\mby)
        \\
        \mbh(\mby,\mbx)
    \end{bmatrix}
\end{split}
\end{equation}
The gradient of the unperturbed $Q$-function, $Q_{1y}^*(\mbx,\mby)$, is analogous to the (infinitesimal) phase response curve (iPRC) from classical \textit{deterministic} theory \cite{perez2023universal}.
Therefore, the integrals
\begin{equation}
\begin{split}
    \beta &= \iint_{\mathbb{R}^{2n}} d\mbx d\mby \; P_{1x}(\mbx,\mby)(\mathcal{L}_{\kappa}^\dagger Q_{1y}^*(\mbx,\mby)) 
    \\
    &= \iint_{\mathbb{R}^{2n}} d\mbx d\mby \; P_{1x}(\mbx,\mby)\left(\nabla Q_{1y}^*(\mbx,\mby)\right)^T\begin{bmatrix}
        \mbh(\mbx,\mby)
        \\
        \mbh(\mby,\mbx)
    \end{bmatrix}
    \\
    \\
    \alpha &= \iint_{\mathbb{R}^{2n}} d\mbx d\mby \; P_{1x}(\mbx,\mby)(\mathcal{L}_{\kappa}^\dagger Q_{1x}^*(\mbx,\mby))    
    \\
    &= \iint_{\mathbb{R}^{2n}} d\mbx d\mby \; P_{1x}(\mbx,\mby)\left(\nabla Q_{1x}^*(\mbx,\mby)\right)^T\begin{bmatrix}
        \mbh(\mbx,\mby)
        \\
        \mbh(\mby,\mbx)
    \end{bmatrix}
\end{split}
\end{equation}
bear a strong resemblance to the deterministic interaction function (or $H$ function), an integral over the deterministic limit cycle of the inner product of the iPRC with the perturbation.
Interaction functions play a critical role in determining the bifurcation associated with the onset of synchronization in the deterministic setting; 
see, for instance, \cite{ermentrout2010mathematical,izhikevich2007dynamical, park2021high}.

Theorem \ref{theorem:Q function perturbation} gives the leading order behavior in perturbations $\kappa,\tau$ of the $Q$-function eigenvalues, assuming that the perturbations scale proportionally to each other.
Generically, one expects the repeated unperturbed eigenvalues to split.
However, in certain cases, the repeated eigenvalues are maintained under simultaneous perturbation of $\kappa$ and $\tau$.
The following Corollary establishes conditions under which the repeated eigenvalue structure is maintained.
This result gives an analytic expression for the linear synchronization boundary, i.e., the ``tip" of an Arnold tongue, for coupled stochastic oscillators (see \S\ref{ssec:Arnold-tongues} for a visual representation).

\begin{corollary}\label{corollary:splitting discriminant}
    Assume the coupled oscillator criteria \ref{assumptions:coupled oscillator criteria} hold for a system of symmetrically coupled oscillators of the form \eqref{eq:SDE}.
Let $\Upsilon(\kappa,\tau) = \frac{\operatorname{trace}(\mathcal{M})}{2}$.
Define the \emph{splitting discriminant}
\begin{equation}\label{eq:splitting discriminant}
    \mathcal{D}(\kappa,\tau) = \sqrt{\operatorname{trace}(\mathcal{M})^2 - 4\det(M)}
\end{equation}
where $\mathcal{M}$ is the matrix from Theorem \ref{theorem:Q function perturbation}.
Assume that
\begin{equation}
    \beta = \iint_{\mathbb{R}^{2n}} d\mbx d\mby \; P_{1x}(\mbx,\mby)(\mathcal{L}_{\kappa}^\dagger Q_{1y}^*(\mbx,\mby)) \neq 0
\end{equation}
Fix $(\kappa^*,\tau^*)\in \mathcal{K}\times \mathcal{T}$ such that
\begin{widetext}
\begin{equation}\label{eq:IFT}
    \kappa^* = \mathfrak{K}(\tau^*) = \pm i |\tau^*|\frac{\sqrt{\left(\iint_{\mathbb{R}^{2n}} d\mbx d\mby \; P_{1x}(\mbx,\mby)[ \mathcal{L}_\tau^\dagger Q_{1x}^*(\mbx,\mby)] - P_{1y}(\mbx,\mby)[ \mathcal{L}_\tau^\dagger Q_{1y}^*(\mbx,\mby)]\right)^2}}{2\beta}
\end{equation}
\end{widetext}
Then, $\mathcal{D}(\mathfrak{K}(\tau^*),\tau^*)=0$, and under the re-parameterization $(\kappa^*,\tau^*) \to (\epsilon\kappa^*,\epsilon\tau^*)$, the SKO eigenvalues of \eqref{eq:SDE} are repeated with algebraic multiplicity 2 to leading order in $\epsilon$
\begin{equation}
    \lambda_+(\epsilon\kappa,\epsilon\tau) = \lambda_-(\epsilon\kappa,\epsilon\tau) = \lambda_1 + \epsilon\Upsilon(\kappa^*,\tau^*) + \mathcal{O}(\epsilon^2)
\end{equation}
\end{corollary}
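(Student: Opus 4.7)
The plan is to recognize that $\mathcal{D}(\kappa,\tau)$ is the discriminant of the characteristic polynomial of the $2\times 2$ matrix $\mathcal{M}(\kappa,\tau)$ from Theorem \ref{theorem:Q function perturbation}, so its two eigenvalues collide precisely when $\mathcal{D}=0$, in which case both equal $\operatorname{trace}(\mathcal{M})/2 = \Upsilon$. Once this is established, Theorem \ref{theorem:Q function perturbation} directly delivers
\begin{equation}
\lambda_\pm(\epsilon\kappa^*,\epsilon\tau^*) = \lambda_1 + \epsilon\Upsilon(\kappa^*,\tau^*) + \mathcal{O}(\epsilon^2).
\end{equation}
The remaining work is thus to compute the entries of $\mathcal{M}$ explicitly and to solve $\mathcal{D}(\kappa^*,\tau^*)^2 = 0$ for $\kappa^*$ as a function of $\tau^*$.

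First I would decompose $\mathcal{M} = \kappa\mathcal{M}_\kappa + \tau\mathcal{M}_\tau$ by linearity. The coupling block $\mathcal{M}_\kappa$ has common diagonal entries $\alpha$ and common off-diagonal entries $\beta$ by the symmetric-coupling calculation already carried out in the proof of Corollary \ref{corollary:id osci}. For the detuning block $\mathcal{M}_\tau$, the key observation is that $Q_{1y}^*(\mbx,\mby) = Q_1^*(\mby)\mathbb{1}(\mbx)$ is independent of $\mbx$, so
\begin{equation}
\mathcal{L}_\tau^\dagger Q_{1y}^*(\mbx,\mby) = \mbu_2(\mby)\cdot\nabla_y Q_1^*(\mby).
\end{equation}
Multiplying by $P_{1x}(\mbx,\mby) = P_1(\mbx)P_0(\mby)$ and integrating factors the $(1,2)$-entry of $\mathcal{M}_\tau$ as
\begin{equation}
(\mathcal{M}_\tau)_{12} = \left(\int_{\R^n} P_1(\mbx)\,d\mbx\right)\left(\int_{\R^n} P_0(\mby)\,\mbu_2(\mby)\cdot\nabla_y Q_1^*(\mby)\,d\mby\right),
\end{equation}
which vanishes because biorthogonality with the constant backward eigenfunction $Q_0 = 1$ forces $\int P_1 = 0$. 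The same argument annihilates $(\mathcal{M}_\tau)_{21}$, so $\mathcal{M}_\tau$ is diagonal with entries
\begin{equation}
\eta_{xx} = \iint_{\R^{2n}}P_{1x}\,\mathcal{L}_\tau^\dagger Q_{1x}^*\,d\mbx\,d\mby, \qquad \eta_{yy} = \iint_{\R^{2n}}P_{1y}\,\mathcal{L}_\tau^\dagger Q_{1y}^*\,d\mbx\,d\mby.
\end{equation}

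Assembling these pieces, the identity $\operatorname{trace}(\mathcal{M})^2 - 4\det(\mathcal{M}) = (M_{11}-M_{22})^2 + 4M_{12}M_{21}$ simplifies, since the symmetric $\mathcal{M}_\kappa$ contributes nothing to $M_{11}-M_{22}$ and the diagonal $\mathcal{M}_\tau$ contributes nothing to $M_{12}M_{21}$, to
\begin{equation}
\mathcal{D}(\kappa,\tau)^2 = \tau^2(\eta_{xx}-\eta_{yy})^2 + 4\kappa^2\beta^2.
\end{equation}
Setting $\mathcal{D}^2 = 0$ and solving gives $(\kappa^*)^2 = -(\tau^*)^2(\eta_{xx}-\eta_{yy})^2/(4\beta^2)$, so
\begin{equation}
\kappa^* = \pm\frac{i|\tau^*|\sqrt{(\eta_{xx}-\eta_{yy})^2}}{2\beta},
\end{equation}
matching \eqref{eq:IFT} exactly.

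The main obstacle, as I see it, is the identification $(\mathcal{M}_\tau)_{12} = (\mathcal{M}_\tau)_{21} = 0$: although it follows immediately from the product structure of the joint unperturbed eigenfunctions together with $\int P_1 = 0$, this cancellation is what keeps the formula \eqref{eq:IFT} compact by preventing cross-terms between $\kappa$ and $\tau$ from appearing in $\mathcal{D}^2$. Once this vanishing is in hand, the remainder is elementary $2\times 2$ linear algebra combined with the perturbation expansion already provided by Theorem \ref{theorem:Q function perturbation}.
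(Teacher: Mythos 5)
Your proposal is correct and follows essentially the same route as the paper: both isolate the detuning block, show its off-diagonal entries vanish by factoring $P_{1x}(\mbx,\mby)\,\mathcal{L}_\tau^\dagger Q_{1y}^*(\mbx,\mby)$ into a product and invoking $\int P_1 = 0$ from biorthogonality, and then read off $\mathcal{D}^2 = \tau^2(\eta_{xx}-\eta_{yy})^2 + 4\kappa^2\beta^2$ before solving the quadratic. The only cosmetic difference is that you decompose $\mathcal{M} = \kappa\mathcal{M}_\kappa + \tau\mathcal{M}_\tau$ and apply the identity $\operatorname{trace}^2 - 4\det = (M_{11}-M_{22})^2 + 4M_{12}M_{21}$, whereas the paper writes the full $2\times 2$ matrix entrywise and expands directly; the substance is identical.
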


\begin{proof}
    By Theorem \ref{theorem:Q function perturbation}, the leading order update to the eigenvalues are given by the eigenvalues of the matrix $\mathcal{M}$.
    The eigenvalues of $\mathcal{M}$ are the roots of the characteristic polynomial, which are
    \begin{equation}\label{eq:eigenvalue_form}
        \lambda_{1\pm}(\kappa,\tau) = \Upsilon(\kappa,\tau) \pm \frac{1}{2}\mathcal{D}(\kappa,\tau)
    \end{equation}
    For ease of notation, define the constants $\alpha$ and $\beta$ as in Corollary \ref{corollary:id osci}, and the constants $a_1$, $b_1$, $c_1$, and $d_1$ as 
    \begin{equation}
        \begin{split}
            a_1 &= \iint_{\mathbb{R}^{2n}} d\mbx d\mby \; P_{1x}(\mbx,\mby)( \mathcal{L}_\tau^\dagger Q_{1x}^*(\mbx,\mby))
            \\
            b_1 &= \iint_{\mathbb{R}^{2n}} d\mbx d\mby \; P_{1x}(\mbx,\mby)( \mathcal{L}_\tau^\dagger Q_{1y}^*(\mbx,\mby))
            \\
            c_1 &= \iint_{\mathbb{R}^{2n}} d\mbx d\mby \; P_{1y}(\mbx,\mby)( \mathcal{L}_\tau^\dagger Q_{1x}^*(\mbx,\mby))
            \\
            d_1 &= \iint_{\mathbb{R}^{2n}} d\mbx d\mby \; P_{1y}(\mbx,\mby)( \mathcal{L}_\tau^\dagger Q_{1y}^*(\mbx,\mby))
        \end{split}
    \end{equation}
    so that the matrix $\mathcal{M}(\kappa,\tau)$ is expressed as
    \begin{equation}
        \begin{split}
            \mathcal{M}(\kappa,\tau) = \begin{bmatrix}
        \tau a_1 + \kappa \alpha & \tau b_1 + \kappa \beta
        \\
        \tau c_1 + \kappa \beta & \tau d_1 + \kappa \alpha
    \end{bmatrix}
        \end{split}
    \end{equation}
    We now show that $c_1=b_1=0$.
    Recalling \eqref{eq:SDE} we write $\mbu_i$ for the detuning perturbation of the vector field governing the $i^\text{th}$ oscillator.    Note that
    \begin{equation}
        \begin{split}
            c_1 &= \iint_{\mathbb{R}^{2n}} d\mbx d\mby \; P_{1y}(\mbx,\mby)[ \mathcal{L}_\tau^\dagger Q_{1x}^*(\mbx,\mby)]
            \\
            &= \iint_{\mathbb{R}^{2n}} d\mbx d\mby \; P_1(\mby)P_0(\mbx)[ \mbu_1(\mbx)\partial_x + \mbu_2(\mby)\partial_y]\{ Q_{1}^*(\mbx)\mathbb{1}(\mby)\}
            \\
            &= \iint_{\mathbb{R}^{2n}} d\mbx d\mby \; P_1(\mby)P_0(\mbx)[ \mbu_1(\mbx)\partial_x \{ Q_{1}^*(\mbx)\}]\mathbb{1}(\mby)
            \\
            &= \left(\int_{\mathbb{R}^n} d\mby \; P_1(\mby)\mathbb{1}(\mby)\right) \left(\int_{\mathbb{R}^n} d\mbx \; P_0(\mbx)[ \mbu_1(\mbx)\partial_x \{ Q_{1}^*(\mbx)\}]\right)
            \\
            &= 0 \times  \left(\int_{\mathbb{R}^n} d\mbx \; P_0(\mbx)[ \mbu_1(\mbx)\partial_x \{ Q_{1}^*(\mbx)\}]\right)
            \\
            &= 0
        \end{split}
    \end{equation}
    where the integral with respect to $\mby$ vanishes due to the biorthogonality of the eigenfunctions of the isolated oscillator in $\mby$.
    A similar argument shows that $b_1=0$.
    Then, simple algebra shows that
    \begin{equation}
        \mathcal{D}(\kappa,\tau) = \sqrt{\kappa^2(2\beta)^2 + \tau^2(d_1-a_1)^2}
    \end{equation}
    By assumption, $\beta\neq 0$.
    Therefore, provided that
    \begin{equation}
        \kappa^* = \mathfrak{K}(\tau^*) = \pm i |\tau^*| \frac{\sqrt{(a_1-d_1)^2}}{2\beta}
    \end{equation}
    it follows that $\mathcal{D}(\kappa^*,\tau^*)=0$.
    Moreover, it is clear that the eigenvalues of the matrix $\mathcal{M}$ are given by
    \begin{equation}
        \lambda_{1\pm}(\kappa,\tau) = \frac{\text{trace}(\mathcal{M}) \pm \mathcal{D}(\kappa,\tau)}{2}
    \end{equation}
    Consequently,
    \begin{equation}
        \lambda_{1\pm}(\kappa^*,\tau^*) = \frac{\text{trace}(\mathcal{M})}{2} = \Upsilon(\kappa^*,\tau^*)
    \end{equation}
    which completes the proof.
\end{proof}

We note that if the quantity $(a_1-d_1)$ is purely imaginary, then purely real perturbations $\kappa^*$ and $\tau^*$ exist which satisfy $\kappa^* = \mathfrak{K}(\tau^*)$.
Consequently, the splitting discriminant vanishes for physical (real) perturbations when the action of $\mathcal{L}_\tau^\dagger$ results in a purely imaginary perturbation, i.e., influences \textit{only} the frequency of the isolated oscillators.

We remark that it may not always be possible to choose parameters $\kappa^*,\tau^*\neq0$ such that $\mathcal{D}(\kappa^*,\tau^*)=0$.
In such cases, an eigenvalue bifurcation in the sense of $Q$-synchronization (recall definition \ref{definition}) does not occur to leading order in $\kappa$ and $\tau$.
See Appendix \ref{appendix:no bifurcation} for a specific example.
However, in cases where such parameter values do exist, Corollary \ref{corollary:splitting discriminant} proved that the condition $\mathcal{D}(\kappa^*,\tau^*)=0$ determines a functional relationship between the coupling strength and frequency parameter, $\kappa^* = \mathfrak{K}(\tau^*)$.
We call such parameter value pairs $(\kappa^*,\tau^*)$ KT points.
In \S\ref{section:examples}, we find KT points for three example systems, and show that the corresponding functional relationship determines a synchronization regime analogous to $1:1$ mode-locking Arnold tongues for coupled deterministic oscillators.

Corollary \ref{corollary:splitting discriminant} does not address the leading order update to the eigenfunctions.
We find that in many cases, the geometric multiplicity of a repeated eigenvalue at a KT point is unity.
Therefore, a higher order analysis may be necessary to elucidate the structure of the perturbed eigenfunctions, which is beyond the scope of this paper.

\subsection{Power Spectra and Cross-Spectral Density}
\label{ssec:PowerSpectra_and_CrossSpectralDensity}

In this section, we establish that the splitting of the $Q$-function eigenvalues determines the form of the cross spectra and cross-spectral density of the oscillating units in $Q$-function coordinates.
The form of the power spectra and cross-spectral density becomes particularly simple at the KT point.
In the following, the KT point is understood to exist at certain parameter values $\kappa^*$ and $\tau^*$.
We use the language ``above the KT point'' to refer to parameter value pairs $(\kappa,\tau^*)$ for $\kappa>\kappa^*$  and ``below the KT point'' to refer to parameter value pairs $(\kappa,\tau^*)$ for $\kappa<\kappa^*$.
As we shall see below, it is natural to view parameter combinations above the KT point as being within the synchronization region, and parameter combinations below the KT point as being outside the synchronization region.

Recall that in $Q$-function coordinates, the power spectrum of a stochastic oscillator is a Lorentzian whose exact form is given in terms of the eigenvalues of the $Q$-function, cf.~\eqref{eq:Q function power spectrum}. 
Recalling the discussion in \S\ref{para:general_assumptions}, a system of two robustly oscillatory \textit{uncoupled} stochastic oscillators has two $Q$-functions, $Q_{1x}^*(\mbx,\mby)$ and $Q_{1y}^*(\mbx,\mby)$ (one contribution from each oscillator).
As the coupling strength is increased, these eigenfunctions become $\mathcal{Q}_+^*(\mbx,\mby)$ and $\mathcal{Q}_-^*(\mbx,\mby)$, respectively.
Their corresponding eigenvalues, $\lambda_+ = \mu_+ + i\omega_+$ and $\lambda_- = \mu_- + i\omega_-$, are generically perturbed from their $\kappa=0$ counterpart $\lambda_1$ (repeated with algebraic multiplicity 2).
Even when $\kappa\neq 0$, the power spectrum, in the coordinates of each respective $Q$-function, is still a pure Lorentzian
\begin{equation}\label{eq:power spectra}
    S_+(\nu) = \frac{2|\mu_+|}{\mu_+^2 + (\nu-\omega_+)^2}, \quad S_-(\nu) = \frac{2|\mu_-|}{\mu_-^2 + (\nu-\omega_-)^2}
\end{equation}

We now discuss the shape of the power spectra in the special case that a system of coupled stochastic oscillators admits a KT point.
At the KT point, the eigenvalues are identical with $\mu_+=\mu_-$ and $\omega_+=\omega_-$.
Consequently, the power spectra of the two oscillators become identical.
In the important case that the splitting discriminant $\mathcal{D}(\kappa,\tau)\neq0$ is purely real, above the KT point the real parts of the eigenvalues are distinct ($\mu_+\neq \mu_-$), but the imaginary parts are identical ($\omega_+=\omega_-$).
Accordingly, both  power spectra  peak at the same frequency.
In contrast, in the original coordinates, the power spectra of each oscillator have peaks which generically only asymptotically approach each other as coupling strength increases.

Below the KT point, we make no assumptions about the form of the $Q$-function eigenvalues, save that they are non-identical.
In the case of two weakly coupled robustly oscillatory stochastic oscillators, one generically expects that the joint system is not robustly oscillatory.
Consequently, each oscillator has a prominently peaked power spectrum.
That is, the system behaves as two distinct stochastic oscillators.  
As the coupling increases, the power spectra ``collide'' and become identical at the KT point.
Above the bifurcation, the coupled system is robustly oscillatory provided that $\mathcal{D}(\kappa,\tau)\neq 0$ is not purely imaginary, i.e., there is a unique low-lying eigenmode which dominates system dynamics at intermediate-long times.
Only one of the two power spectra remains prominent, analogous to the center-of-mass mode of a coupled harmonic oscillator system \eqref{eq:normal_mode_system}.
The other flattens and becomes less prominent because the corresponding $Q$-function becomes a high-order mode, analogous to the anti-phase mode of a harmonic oscillator system.
We show examples of power spectra for several concrete model systems in \S\ref{ssec:Examples-PowerSpectra}.

The cross spectra between two eigenfunctions, $\mathcal{Q}_+^*$ and $\mathcal{Q}_-^*$, is given by \cite{perez2023universal}
\begin{equation}\label{eq:cross spectra}
    S_{+,-}(\nu) = -\langle \mathcal{Q}_+^* \mathcal{Q}_- \rangle \left( \frac{1}{\lambda_+ - i\nu} + \frac{1}{\lambda_-^* + i\nu}\right) 
\end{equation}
Before analyzing the behavior of \eqref{eq:cross spectra}, we consider the quantity $\langle \mathcal{Q}_+^* \mathcal{Q}_- \rangle$.
Recall that the backward eigenfunctions are normalized so that they have unit variance.
Consequently, at the KT point, the eigenfunctions are identical, but only up to a rotation. (c.f., the discussion of phase ambiguity following the proof of Theorem \ref{theorem:Q function perturbation}).
If one disregards the rotational ambiguity, the quantity $\langle \mathcal{Q}_+^* \mathcal{Q}_- \rangle$ will in general be complex, even at the KT point.
To that end, without loss of generality, we make the gauge transformation, $\mathcal{Q}^*_- \to \mathcal{Q}_-^*e^{i\alpha}$ where 
\begin{equation}\label{eq:minimization solution}
    \alpha = \frac{-i}{2}\log\frac{\iint_{\mathbb{R}^{2n}} d\mbx d\mby \; \mathcal{Q}_+^*(\mbx,\mby) \mathcal{Q}_-(\mbx,\mby)  P_0(\mbx,\mby)  }{\iint_{\mathbb{R}^{2n}} d\mbx d\mby \; \mathcal{Q}_+(\mbx,\mby) \mathcal{Q}_-^*(\mbx,\mby)  P_0(\mbx,\mby)   }
\end{equation}
We choose the branch of the logarithm in such a way that $\alpha\in [0,2\pi)$.
At the KT point one has $\mathcal{Q}_-e^{i\alpha}=\mathcal{Q}_+$, by construction, and thus
\begin{equation}\label{eq:rotation normalization bifurcation point}
    \langle \mathcal{Q}_+^* \mathcal{Q}_-e^{i\alpha} \rangle  = 
    \iint_{\mathbb{R}^{2n}} d\mbx d\mby \; \left|\mathcal{Q}_+^*(\mbx,\mby)\right|^2 P_0(\mbx,\mby)   = 1
\end{equation}
For a detailed derivation of \eqref{eq:minimization solution}, please see Appendix \ref{appendix:gauge_transformation}. 
That such a normalization procedure is possible demonstrates that the eigenvalue dynamics, i.e., the $Q$-synchronization bifurcation, captures the intrinsic behavior of the cross spectra of coupled stochastic oscillators.

The gauge transformation \eqref{eq:minimization solution} is  analogous to the well known ambiguity in  the asymptotic phase of a deterministic oscillator, which is only defined up to an additive constant.
A common method to eliminate this ambiguity when comparing several oscillators is to fix the phase of one oscillator as a reference, and relabel the phase of the remaining oscillators in relative to the reference oscillator.
Our gauge transformation treats $Q_{1x}$ as the `reference', and rotates the remaining eigenfunction to eliminate the ambiguity in the stochastic phase labels.

Returning to the form of the cross spectra, we find that under the gauge transformation \eqref{eq:minimization solution}, the shape of the cross-spectral density \eqref{eq:cross spectra} is completely captured by the low-lying eigenvalues of the SKO.  
There are a variety of possibilities, depending on the system in question.
For concreteness, here we consider the case when the splitting discriminant $\mathcal{D}(\kappa,\tau)$ is real and nonzero.
In this case the joint system has eigenvalues with identical real parts but nonidentical imaginary parts before the KT point, and nonidentical real parts but identical imaginary parts after the KT point.
Section \S\ref{section:examples} contains specific examples of systems which exhibit this behavior.
Let 
\begin{equation}
    S^\dagger_{+,-}(\nu) = - \langle \mathcal{Q}_+^* \mathcal{Q}_-e^{i\alpha} \rangle \left( \frac{1}{\lambda_+ - i\nu} + \frac{1}{\lambda_-^* + i\nu}\right) 
\end{equation}
denote the cross-spectral density under the gauge transformation.
Recall that $\lambda_+ = \mu_+ + i \omega_+$, $\lambda_- = \mu_- + i \omega_-$, and define $\nu^* = \frac{\omega_++\omega_-}{2}$.
We list several easily verifiable statements concerning the behavior of the imaginary part of $S^\dagger_{+,-}(\nu)$:
\begin{itemize}
    \item Below the KT point, $\text{Im}(S^\dagger)$ has a local extremum at $\nu^*$, and is an even function about $\nu^*$.
    \item At the KT point, $S^\dagger$ is purely real and identical to the (common) power spectrum, i.e., is the Lorentzian \eqref{eq:power spectra}
    \item Above the KT point, $\text{Im}(S^\dagger)$ equals zero at $\nu^*$, and is an odd function about $\nu^*$.
\end{itemize}
Consequently, the cross-spectral density undergoes a qualitative change at the KT point.

\subsection{\textit{Q}-Synchronization}\label{subsec: q synch}

The results of \S \ref{subsection:results_Q_functions} indicate that when the splitting discriminant $\mathcal{D}(\kappa,\tau)\neq 0$, a qualitative change in the eigenvalues of the SKO corresponding to the $Q$-function occurs, i.e., the eigenvalues split at the KT point.
The splitting eigenvalues also reflect a qualitative change in the power spectra and cross-spectral density in $Q$-function coordinates of the oscillators.
These results motivate the novel Definition \ref{definition} of synchronization for coupled stochastic oscillators as stated in the introduction:

\begin{quotation}
    ``Symmetrically coupled oscillators of the form \eqref{eq:SDE} exhibit \textit{$Q$-synchronization} if the leading nontrivial complex conjugate eigenvalues of the SKO undergo a qualitative change, i.e., a repeated pair of eigenvalues is created or destroyed, as system parameters are varied.''
\end{quotation}

For fixed $\tau$, $Q$-synchronization means that the joint system acts as a single high-dimensional oscillator (for sufficiently strong coupling) provided that $\mathcal{D}(\kappa,\tau)$ is not purely imaginary.
In the special case of identical $(\tau=0$, $\mbg_1=\mbg_2)$ symmetrically coupled oscillators, the $Q$-function eigenvalue of the joint uncoupled system is a repeated eigenvalue.
Consequently, when $\kappa=0$, the joint system is not robustly oscillatory due to lack of a spectral gap, i.e., it cannot be described as a single stochastic oscillator.
However, when $\kappa \neq 0$, and the splitting discriminant $\mathcal{D}(\kappa,\tau)$ is not purely imaginary, then the real parts of the $Q$-function eigenvalues split.
Therefore, the coupling introduces a spectral gap; there is a unique dominant eigenmode and the joint system is considered robustly oscillatory in the sense of Assumptions \ref{robustly_osc_criteria}.

In the special case that $\mathcal{D}(\kappa,\tau)$ is purely real, the coupling introduces a spectral gap while maintaining the identical mean frequencies of each oscillator (see equation \eqref{eq:eigenvalue_form}).
Such behavior is identical to the deterministic case, where any small amount of coupling between identical symmetrically coupled oscillators is expected to result in synchronization, i.e., a high-dimensional stable limit-cycle solution where the constituents have identical periods.

In the following section, we study three examples of symmetrically coupled stochastic oscillators which exhibit $Q$-synchronization in the sense of Definition \ref{definition}.
For each system, we analytically compute a relationship $\kappa^*=\mathfrak{K}(\tau^*)$ such that $\mathcal{D}(\mathfrak{K}(\tau^*),\tau^*)=0$, and use this condition to construct synchronization regimes analogous to deterministic 1:1 mode-locking regions (Arnold tongues).

%%%%%%%%%%%%%%%%%%%%%%%%%%%%%%%%%%%%%%%%%%%%%%%%%%%%%%%%%%%%%%%%%%%%%%%%%%%%%%%%%%%%%%%%%%%%%%%%%%%%%%%%%%%%%%%%%%

\section{Examples}\label{section:examples}

\subsection{Model systems}\label{subsection:model systems}

Throughout this section, we apply our theory to three model systems: a 4D linear model which consists of two coupled planar Ornstein-Uhlenbeck processes, a stochastic version of the 2D coupled Kuramoto model considered  in the introduction  \eqref{eq:intro_kuramoto}, and a 9D discrete-state system comprising two coupled 3D discrete state oscillatory subsystems. 
We remark that each of the continuous-state models, in the absence of coupling, meet the `robustly oscillatory' criteria outlined in \S \ref{subsection:Q_phase_reduction}.
The discrete-state model does not meet the robustly oscillatory criteria because its quality factor is less than unity (see \S\ref{subsection: quality factor}).
Nevertheless, as we shown below, our framework still elucidates the synchronization behavior of the discrete-state system.
Figure \ref{fig:model_systems} shows sample trajectories, eigenvalue spectra, and power spectra for each uncoupled system.

\subsubsection{4D linear model}\label{subsubsection:4D linear model}

Our first example is a system of two diffusively coupled, rotating Ornstein-Uhlenbeck processes
 \begin{equation}\label{eq:4D linear model}
    \begin{split}
        x_1' &= -\eta x_1 +(\omega + \tau) x_2 + \kappa (y_1-x_1) + \sqrt{2D}\xi_1(t)
        \\
        x_2' &= -(\omega + \tau) x_1 - \eta x_2 + \kappa (y_2-x_2) + \sqrt{2D}\xi_2(t)
        \\
        y_1' &= -\eta y_1 +\omega y_2 + \kappa (x_1-y_1) + \sqrt{2D}\xi_3(t)
        \\
        y_2' &= -\omega y_1 - \eta y_2 + \kappa (x_2-y_2) + \sqrt{2D}\xi_4(t)
    \end{split}
\end{equation}
The parameter $\kappa \geq 0$ characterizes the interaction strength of the oscillators, $D$ governs the noise intensity, $\tau$ governs the frequency of the first oscillator, and $\eta$ governs the coherence of the oscillations.
Each oscillator is driven by independent delta-correlated Gaussian white noise $\langle \xi_i(t), \xi_j(s)\rangle = \delta(t-s)\delta_{i,j}$.
In all cases, we set $\eta=0.1$, $\omega=2$, and $D=0.1$.
Unless otherwise specified, all numerical simulations are performed with the Euler-Maruyama method \cite{kloeden1992stochastic} with a timestep of $0.002$.

Let $\mbz = [x_1, \;x_2, \; y_1,\; y_2]^T$.
Associated with the underlying SDEs \eqref{eq:4D linear model} is a forward equation 
\begin{equation}
    \mathcal{L}[P_\lambda(\mbz)] = \lambda P_\lambda(\mbz)
\end{equation}
and a backward equation
\begin{equation}
    \mathcal{L}^\dagger[Q^*_\lambda(\mbz)] = \lambda Q^*_\lambda(\mbz)
\end{equation}
defined on $\mathbb{R}^4$.
As boundary condition requirements, we specify that the forward eigenfunctions satisfy $P_\lambda(\mbz) \in \mathcal{C}^2(\R^4)\cap L_1(\R^4)$, and that $\lim_{\|\mbz\| \to \infty} P_\lambda(\mbz) = 0 $.
For any compact subset $\Omega^4 \subset \mathbb{R}^4$, the backward eigenfunctions satisfy $Q^*_\lambda \in \mathcal{C}^2(\R^4)\cap L_\infty(\Omega^4)$.

\subsubsection{2D ring model}\label{subsubsection:2D Kuramoto model}

Our second example consists of interacting noisy Kuramoto oscillators of the form
\begin{equation}\label{eq:2D Kuramoto model system}
    \begin{split}
        x' &= \omega + \tau + \kappa \sin(y-x) + \sqrt{2D}\xi_1(t)
        \\
        y' &= \omega + \kappa \sin(x-y) + \sqrt{2D}\xi_2(t)
    \end{split}
\end{equation}
where the system evolves on the torus, $x,y\in [0,2\pi)$.
In all cases, we set $\omega=2$ and $D=0.1$.
Unless otherwise specified, all numerical simulations are performed with the Euler-Maruyama method \cite{kloeden1992stochastic} with a timestep of $0.002$.

Associated with the underlying SDEs \eqref{eq:2D Kuramoto model system} is a forward equation 
\begin{equation}
    \mathcal{L}[P_\lambda(x,y)] = \lambda P_\lambda(x,y)
\end{equation}
and a backward equation
\begin{equation}
    \mathcal{L}^\dagger[Q^*_\lambda(x,y)] = \lambda Q^*_\lambda(x,y)
\end{equation}
defined on $\mathbb{T}^2 = [0,2\pi)\times [0,2\pi)$.
As boundary condition requirements, we specify that the forward eigenfunctions satisfy $P_\lambda(\mbz) \in \mathcal{C}^2(\mathbb{T}^2)\cap L_1(\mathbb{T}^2)$, and that $P_\lambda(0,0) = P_\lambda(0,2\pi) = P_\lambda(2\pi,0) = P_\lambda(2\pi,2\pi)$.
The backward eigenfunctions satisfy $Q^*_\lambda \in \mathcal{C}^2(\mathbb{T}^2)\cap L_\infty(\mathbb{T}^2)$, and that $Q^*_\lambda(0,0) = Q^*_\lambda(0,2\pi) = Q^*_\lambda(2\pi,0) = Q^*_\lambda(2\pi,2\pi)$.

\subsubsection{9D discrete-state system}\label{subsubsection:9D discrete-state model}

Our final example serves to demonstrate that our theory can be successfully applied to discrete-state systems.
The backward equation corresponding to a discrete-state system with transition-rate matrix $\mathcal{E}$ is given by
\begin{equation}
    \frac{df}{dt} = \mathcal{E} f(t)
\end{equation}
where $f(t)$ represents the expected value of a test function which may depend on the state of the system.
The eigenvalue-eigenfunction pairs of the backward operator are thus the eigenvalue-right eigenvector pairs of the matrix $\mathcal{E}$.
The corresponding forward eigenfunctions are left eigenvectors of $\mathcal{E}$.

We consider a specific example of two coupled discrete-state oscillators;
each individual oscillator is a three-state system with transition-rate matrices $A$ and $B$, respectively
\begin{equation}
\begin{split}
    A &= \begin{bmatrix}
-(\omega+\tau) & 0 & \omega+\tau \\
\omega+\tau & -(\omega+\tau) & 0 \\
0 & \omega+\tau & -(\omega+\tau)
\end{bmatrix} 
\\
B &= \begin{bmatrix}
-\omega & 0 & \omega \\
\omega & -\omega & 0 \\
0 & \omega & -\omega
\end{bmatrix}
\end{split}
\end{equation}
The states of each system may be represented as $X_A = [A_1, A_2, A_3]^T$ and $X_B = [B_1, B_2, B_3]^T$, where $A_i,B_i\in \{0,1\}$.
The parameter $\omega > 0$ describes the rate of transition between states, with cycle $A_1\to A_2 \to A_3 \to A_1$ and so forth.
The perturbation parameter, $\tau$, influences the transition rate, and thus the `frequency' of oscillation of the first discrete-state system.

Unless otherwise specified, all numerical simulations are performed with the Gillespie algorithm \cite{gillespie1976general} up to a finite time $T=T_{\text{max}}$.
The resulting time series is interpolated via next neighbor onto a evenly spaced time grid with spacing $0.0002$.
We remark that although the discrete trajectories do not qualitatively ``behave" as an oscillator \textit{in original coordinates} (since the uncoupled systems admit power spectra which peak at 0 frequency, see Figure \ref{fig:model_systems}) the power spectrum of the transformed $Q$-function coordinates nevertheless has a Lorentzian power spectrum peaking at nonzero frequency (see Figure \ref{fig:power spectra}).

When coupled together, the joint system is 9D, with transition-rate matrix given by
\begin{widetext}
\begin{equation}\label{eq:discrete_coupled_system}
\scalebox{.85}{
    $C = \begin{bmatrix}
-2\omega-\tau & 0 & \kappa + \omega+\tau & 0 & 0 & 0 & \kappa + \omega & 0 & 0 \\
\omega+\tau & -2\omega-\tau & 0 & 0 & 0 & 0 & 0 & -\kappa + \omega & 0 \\
0 & -\kappa + \omega+\tau & -2\omega-\tau & 0 & 0 & 0 & 0 & 0 & \omega \\
\omega & 0 & 0 & -2\omega-\tau & 0 & -\kappa + \omega+\tau & 0 & 0 & 0 \\
0 & \kappa + \omega & 0 & \kappa + \omega+\tau & -2\omega-\tau & 0 & 0 & 0 & 0 \\
0 & 0 & -\kappa + \omega & 0 & \omega+\tau & -2\omega-\tau & 0 & 0 & 0 \\
0 & 0 & 0 & -\kappa + \omega & 0 & 0 & -2\omega-\tau & 0 & \omega+\tau \\
0 & 0 & 0 & 0 & \omega & 0 & -\kappa + \omega+\tau & -2\omega-\tau & 0 \\
0 & 0 & 0 & 0 & 0 & \kappa + \omega & 0 & \kappa + \omega+\tau & -2\omega-\tau
\end{bmatrix}$
}
\end{equation}
\end{widetext}
where the joint states are given by $X_C=[(A_i,B_j)]^T$, with $i=\{1,2,3,1,2,3,1,2,3\}$ and $j=\{1,1,1,2,2,2,3,3,3\}$.
We employ ``boxcar'' coupling with coupling strength $\kappa$ to push system states toward the diagonal, i.e., $(A_1,B_1),(A_2,B_2), (A_3,B_3)$ are favored for large $\kappa$.
We impose the restriction that $\kappa<\min\{\omega,\omega+\tau\}$ so that state transition rates do not become negative.

\subsubsection{Quality factors}\label{subsection: quality factor}

For the parameter values indicated above, the quality factors for each of the isolated oscillators are: $|\omega_1/\mu_1|=20$ for the planar OU and 1D Kuramoto oscillators, and $|\omega_1/\mu_1| = \sqrt{3}/3$ for the 3D discrete-state subsystem (see \S\ref{subsection: eval bif} for more detailed information about the leading eigenvalues of each system).
Therefore, the discrete-state system does not meet the robustly oscillatory criteria because it has a quality factor less than unity. 
Nevertheless, as we will show in the following sections, our $Q$-function framework may still be applied to the discrete-state system in a manner which elucidates its synchronization behavior.

\begin{figure*}[ht]
    \centering
    
    \includegraphics[scale=.3]{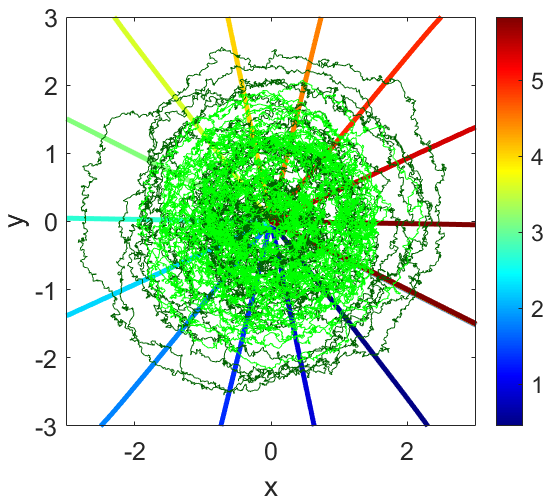}
    \includegraphics[scale=.3]{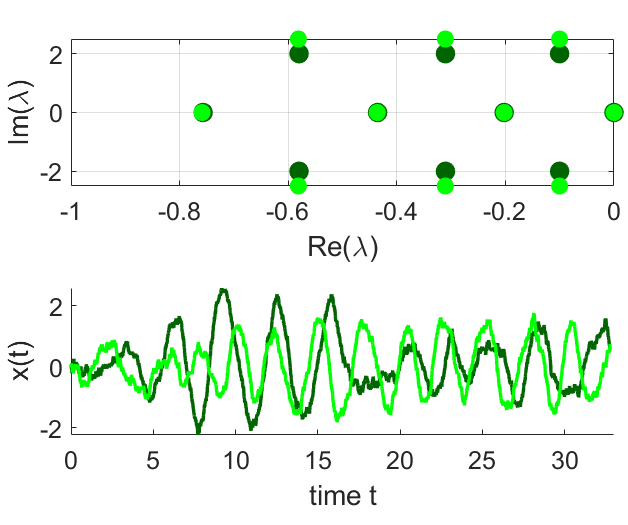}
    \includegraphics[scale=.3]{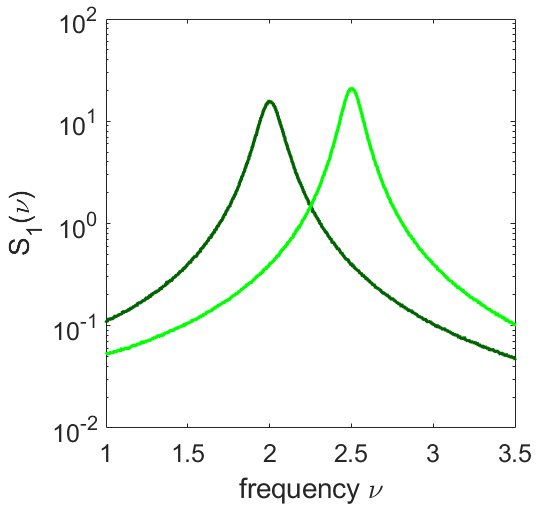}
    
    \includegraphics[scale=.3]{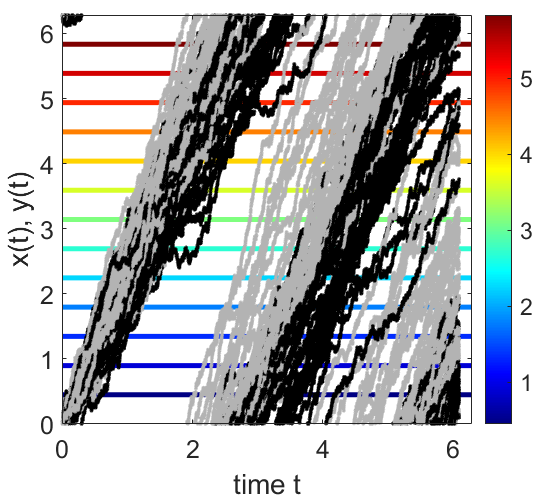}
    \includegraphics[scale=.3]{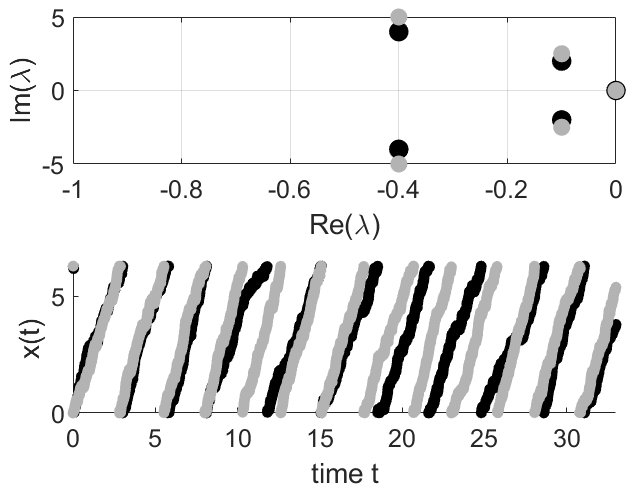}
    \includegraphics[scale=.3]{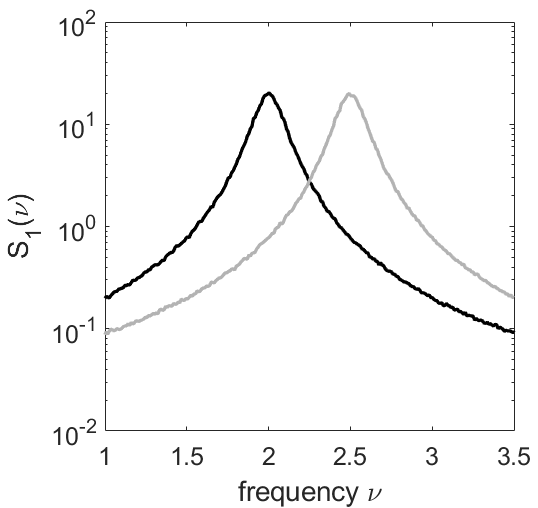}

    \includegraphics[scale=.3]{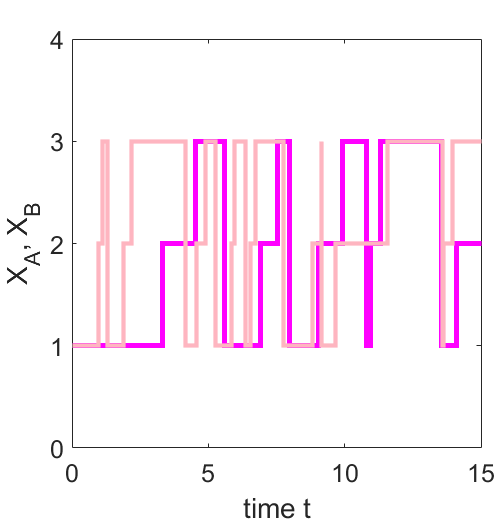}
    \includegraphics[scale=.3]{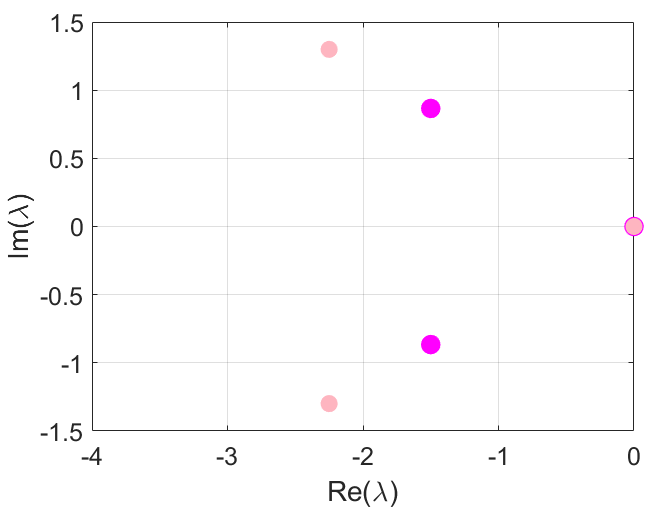}
    \includegraphics[scale=.3]{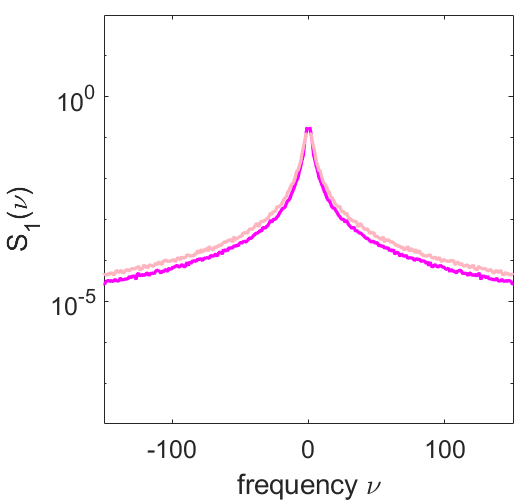}
    
    \caption{Uncoupled stochastic oscillator models.
    For each model, we show sample trajectories (left column), eigenvalue spectra and (or) time series of the first component of each oscillator (middle column), and power spectra of each oscillator (right column). 
    \textbf{Top row:} 4D linear model with parameters $\eta=0.1$, $\omega=2$, $D=0.1$, and $\kappa=0$ with $\tau=0$ (dark green), and $\tau=0.5$ (light green). 
    The left panel shows trajectories in the phase plane overlain with isochrons of the stochastic asymptotic phase function.
    \textbf{Middle row:} 2D ring model with parameters $\omega=2$, $D=0.1$, and $\kappa=0$ with $\tau=0$ (black) and $\tau=0.5$ (gray). The left panel depicts trajectories over a short time-period on the torus $\mathbb{T}^1\equiv[0,2\pi)$.
    \textbf{Bottom row:} 9D discrete-state model with parameters $\omega=1$ and $\kappa=0$, with $\tau=0$ (pink) and $\tau=0.5$ (light pink).
    The left panel depicts  realizations generated by the Gillespie algorithm.}
    \label{fig:model_systems}
\end{figure*}

\subsection{Eigenvalue bifurcations}\label{subsection: eval bif}

In this section, we consider the splitting of the $Q$-function eigenvalues, $\lambda_1=\mu_1+i\omega_1$, which are repeated with algebraic multiplicity two when the system is unperturbed, $\kappa=\tau=0$.
The analysis that we will employ also holds for the complex conjugate eigenvalue $\overline{\lambda_1}$, although we do not consider the complex conjugates here.
As in the proof of Theorem \ref{theorem:Q function perturbation}, we denote the true perturbed eigenvalues as $\lambda_\pm$, where the subscript $\pm$ refers to the splitting behavior of the eigenvalues.
Similarly, the leading order corrections to $\lambda_1$ are denoted $\lambda_{c\pm}$. 

\subsubsection{4D linear system}

We begin with the 4D linear system \eqref{eq:4D linear model}.
When $\kappa=\tau=0$, the unperturbed eigenvalues with positive imaginary part are
\begin{equation}
\label{eq:linear-system-evals-repeated-roots}
    \lambda_{1} = -\eta + i\omega \quad (\text{algebraic multiplicity 2})
\end{equation}
An exact expression for the $Q$-function eigenvalues as a function of $\kappa$ and $\tau$ is available, which serves as validation for our theory.
Let $\mathbf{w}^*$ be a left eigenvector of the matrix
\begin{equation}\label{eq:4D linear matrix}
    \mathcal{A} = \begin{bmatrix}
        -\eta & \omega+\tau & 0 & 0
        \\
        -(\omega+\tau) & -\eta & 0 & 0
        \\
        0 & 0 & -\eta & \omega
        \\
        0 & 0 & -\omega & -\eta
    \end{bmatrix} + \kappa \begin{bmatrix}
        -1 & 0 & 1 & 0
        \\
        0 & -1 & 0 & 1
        \\
        1 & 0 & -1 & 0
        \\
        0 & 1 & 0 & -1
    \end{bmatrix}
\end{equation}
Note that we write $\mathbf{w}^*$ for the row vector which is the conjugate transpose of the column vector $\mathbf{w}$.
Then, the $Q$-functions have the form
\begin{equation}\label{eq:4D linear backwards eigenfunctions}
    \mathcal{Q}^*(x_1,x_2,y_1,y_2) = \mathbf{w}^* \begin{bmatrix}
        x_1 \\ x_2 \\ y_1 \\ y_2
    \end{bmatrix}
\end{equation}
with eigenvalue that is identical to the  eigenvalue of $\mathcal{A}$ corresponding to $\mathbf{w}^*$.
In Appendix \S\ref{appendix: linear system eigenfunctions} we will prove the following general observation: 
The low-lying backward eigenfunctions of a linear system are themselves linear functions of the coordinates, with coefficients given by the left eigenvectors of the drift matrix $\mathcal{A}$.
That is, the form of $\mathcal{Q}^*$ given above is not a unique feature of  \eqref{eq:4D linear model}, but a general feature of any linear system of arbitrary dimension and architecture \cite{leen2016eigenfunctions}.
We remark that in the absence of noise, \eqref{eq:4D linear model} fails to oscillate, but irregular oscillations do occur for any finite-amplitude noise.
It is noteworthy that the backward eigenfunctions \eqref{eq:4D linear backwards eigenfunctions} are independent of the noise strength $D>0$; see \cite{thomas2019phase} for a more detailed study of noisy oscillating linear systems.

%For the specific case of \eqref{eq:4D linear model}, exact expressions are available for the four complex eigenvalues:
%\begin{equation}\label{eq:4D linear eigenvalues}
%    \lambda_\pm^{(\pm)}(\kappa,\tau) = -k-\kappa \pm \left[\left(\omega + \frac{\tau}{2}\right)i \pm \frac{1}{2}\sqrt{4\kappa^2 - \tau^2} \right]
%\end{equation}
%On the RHS of \eqref{eq:4D linear eigenvalues}  each $\pm$ can be taken independently of the other.

For the specific case of \eqref{eq:4D linear model}, exact expressions are available for the true eigenvalues as a function of $\kappa$ and $\tau$, which we denote as follows:
\begin{widetext}
\begin{equation}\label{eq:4D linear eigenvalues}
    \lambda_\pm(\kappa,\tau) = \begin{cases}
    -\eta-\kappa \pm \frac{1}{2}\sqrt{4\kappa^2 - \tau^2} + \left(\omega + \frac{\tau}{2}\right)i  , & \text{when } 4\kappa^2>\tau^2\\
    -\eta-\kappa + \left(\omega + \frac{\tau}{2}\right)i , & \text{when } 4\kappa^2=\tau^2\\
    -\eta-\kappa + \left[\left(\omega + \frac{\tau}{2}\right) \pm \frac{1}{2}\sqrt{\tau^2-4\kappa^2} \right]i, & \text{when } 4\kappa^2<\tau^2\\
    \end{cases}
\end{equation}
\end{widetext}
where we recall that the repeated unperturbed eigenvalue $\lambda_1$ generically splits into distinct eigenvalues $\lambda_\pm(\kappa,\tau)$ upon perturbation.

In the case of coupled identical oscillators ($\tau=0$ but $\kappa\neq 0$), the two eigenvalues take on a particularly simple form, namely
\begin{equation}\label{eq:linear_identical_eigenvalue_exact}
    \lambda_-(\kappa,0) = -\eta-2\kappa + i\omega, \quad \lambda_+(\kappa,0) = -\eta+ i\omega 
\end{equation}

We now analyze the leading order behavior of the repeated, unperturbed eigenvalue, $\lambda_1=-\eta+i\omega$, as a function of both $\kappa$ and $\tau$.
We directly compute the matrix $\mathcal{M}$ in \eqref{eq:M matrix} (see Appendix \ref{appendix:M compuation} for a more detailed derivation), which yields
\begin{equation}
    \mathcal{M} = \begin{bmatrix}
        -\kappa+i\tau & \kappa 
        \\
        \kappa & -\kappa
    \end{bmatrix}
\end{equation}
with eigenvalues 
\begin{equation}
    \lambda_{c\pm}(\kappa,\tau)=
    \frac{\tau}{2}i - \kappa \pm \frac{1}{2} \sqrt{4\kappa^2-\tau^2}
\end{equation}
Consequently, our analysis predicts that the repeated eigenvalue, $\lambda_1$, generically splits into distinct eigenvalues
\begin{equation}
\begin{split}
    \lambda_\pm(\kappa,\tau) &= (-\eta + i\omega) + \left[ \frac{\tau}{2}i - \kappa \pm  \frac{1}{2} \sqrt{4\kappa^2-\tau^2} \right] 
    \\
    &+ o(\kappa) + o(\tau)
\end{split}
\end{equation}
which recovers the eigenvalues of the exact solution \eqref{eq:4D linear eigenvalues}; see Figure \ref{fig:id_eval_bif}.
A similar analysis may be employed to study the complex conjugate eigenvalue $\overline{\lambda_1}$.

The splitting discriminant is given by
\begin{equation}\label{eq:linearD}
    \mathcal{D}(\kappa,\tau) = \sqrt{4\kappa^2 - \tau^2}
\end{equation}
One finds directly from the form of \eqref{eq:linearD}, or alternatively from Corollary \ref{corollary:splitting discriminant}, that
\begin{equation}
    \kappa^* = \mathfrak{K}(\tau^*) = \pm\frac{|\tau^*|}{2}
\end{equation}
satisfies $\mathcal{D}(\mathfrak{K}(\tau^*),\tau^*)=0$.
In the $(\tau,\kappa)$ plane, this relationship defines a $Q$-synchronization boundary,\; see Figure \ref{fig:tongues} in \S\ref{ssec:Arnold-tongues}.

\subsubsection{2D ring model}

When $\kappa=\tau=0$, analytic solutions are available for the eigenvalues and eigenfunctions of the backward operator (augmented with $2\pi$-periodic boundary conditions on $[0,2\pi) \times [0,2\pi)$) associated with \eqref{eq:2D Kuramoto model system} 
\begin{equation}\label{eq:ring_unperturbed_efunctions}
    \begin{split}
        \lambda_{m,n} &= -D(m^2+n^2) + i\omega (m+n)
        \\
        Q_{m,n}^*(x,y) &= [\cos(mx) + i\sin(mx)][\cos(ny) + i\sin(ny)]
        \\
        P_{m,n}(x,y) &= [\cos(mx) - i\sin(mx)][\cos(ny) - i\sin(ny)]
    \end{split}
\end{equation}
with $m,n\in \mathbb{Z}$.
The $Q$-functions of the unperturbed system correspond to the cases where $(m,n)=(1,0)$ and $(m,n)=(0,1)$.
The corresponding unperturbed eigenvalues $\lambda_{0,1}$ and $\lambda_{1,0}$ are given by
\begin{equation}
    \lambda_{1} = -D + i\omega \quad \text{(algebraic multiplicity 2)}
\end{equation}
For $\kappa,\tau\neq 0$, analytic solutions are not available.
However, one may establish a double Fourier ansatz, which transforms the finite-dimensional backward equation into an infinite dimensional linear system.
We implement an efficient, accurate numerical approximation  of the eigenfunctions and eigenvalues of the infinite-dimensional linear system using the method of continued fractions (CF) \cite{risken1996fokker}.
See Appendix \ref{appendix: continued fractions} for a detailed derivation of this procedure, which closely relates to applied Koopman theory and extended dynamic mode decomposition (EDMD) \cite{brunton2016koopman,brunton2021modern,colbrook2023mpedmd, mezic2022numerical, williams2015data}.

Our theory provides a leading order approximation in $\kappa$ and $\tau$ of the eigenvalues.
One may show by direct computation that the matrix $\mathcal{M}$ for the repeated, unperturbed eigenvalue, $\lambda_1=-D+i\omega$, is given by
\begin{equation}
    \mathcal{M} = \begin{bmatrix}
        i\tau & \kappa/2 
        \\
        \kappa/2 & 0
    \end{bmatrix}
\end{equation}
with eigenvalues 
\begin{equation}
    \lambda_{c\pm}= 
    \frac{\tau}{2}i \pm  \frac{1}{2}\sqrt{\kappa^2-\tau^2}
\end{equation}
Consequently, our analysis predicts that the unperturbed, repeated $Q$-function eigenvalue, $\lambda_1$, of \eqref{eq:2D Kuramoto model system} generically splits into distinct eigenvalues
\begin{equation}
\begin{split}
     \lambda_\pm(\kappa,\tau) &= (-D + i\omega) + \left[\frac{\tau}{2}i \pm \frac{1}{2}\sqrt{\kappa^2-\tau^2} \right] 
     \\
     &+ o(\kappa) + o(\tau)
\end{split}
\end{equation}
Figure \ref{fig:id_eval_bif} displays the results.

The splitting discriminant is given by
\begin{equation}\label{eq:ringD}
    \mathcal{D}(\kappa,\tau) = \sqrt{\kappa^2 - \tau^2}
\end{equation}
One finds directly from the form of \eqref{eq:ringD}, or alternatively from Corollary \ref{corollary:splitting discriminant}, that
\begin{equation}
    \kappa^* = \mathfrak{K}(\tau^*) = \pm|\tau^*|
\end{equation}
satisfies $\mathcal{D}(\mathfrak{K}(\tau^*),\tau^*)=0$.
In the $(\tau,\kappa)$ plane, this relationship defines a $Q$-synchronization boundary, see Figure \ref{fig:tongues} in \S\ref{ssec:Arnold-tongues}.

We remark that the CF method also produces accurate estimates of the eigenfunctions of the 2D ring model \eqref{eq:2D Kuramoto model system}.
Above the KT point,
the eigenfunction $\mathcal{Q}^*_+(\mbx,\mby)$ is the dominant eigenmode associated with eigenvalue $\lambda_+$, and corresponds to a synchronized activity (c.f., \S\ref{ssec:PowerSpectra_and_CrossSpectralDensity} pertaining to power spectra).
Meanwhile the eigenfunction $\mathcal{Q}_-^*(\mbx,\mby)$ associated with eigenvalue $\lambda_-$ corresponds to a nondominant mode corresponding to antisynchronized activity.  
In section \S\ref{ssec:PowerSpectra_and_CrossSpectralDensity}, we illustrate this interpretation by plotting the phase and magnitude of these eigenfunctions.

\subsubsection{9D discrete-state system}

Our first two examples show how our framework successfully applies to continuous time, continuous state sytems described by a Fokker-Planck equation.
However, our approach is not limited to such systems.  The $Q$-function approach and its associated phase reduction for stochastic oscillators applies in principle to any Markovian system satisfying the robustly oscillatory criterion (e.g., \cite{thomas2014asymptotic} exhibited both Fokker-Planck systems and also a conductance-based neural model with discrete-state channel noise). 
For our third example we illustrate how the framework applies to a discrete state system of the sort one might find in a finite-population chemical system \cite{anderson2015stochastic}.

We remark that although the discrete-state system \eqref{eq:discrete_coupled_system} is not of the form of the general system \eqref{eq:SDE} considered throughout the paper (and so the explicit results of Corollary \ref{corollary:splitting discriminant} do not apply to this system because the structure of the perturbation in $\kappa$ and $\tau$ differs from the specific case that we considered), a similar eigenvalue perturbation analysis may be employed.  

When $\kappa=\tau=0$, the unperturbed eigenvalues of \eqref{eq:discrete_coupled_system} with least negative real part and positive imaginary part are
\begin{equation}\label{eq:discrete-system-evals-repeated-roots}
    \lambda_1 = \left(-\frac{3}{2} + \frac{\sqrt{3}}{2}i\right)\omega \quad \text{(algebraic multiplicity 2)}
\end{equation}

In the case when $\kappa,\tau\neq 0$, an exact expression is available for the true perturbed eigenvalues 
\begin{widetext}
\begin{equation}\label{eq:discrete_eval_exact}
    \lambda_{\pm}(\kappa,\tau)=\begin{cases}
        -\frac{3}{2}\omega - \frac{3}{4}\tau  + \left(\frac{\sqrt{3}}{2}\omega  +\frac{\sqrt{3}}{4}\tau\right)i \pm  \frac{\sqrt{3}- i}{4}\sqrt{-4\kappa^2+3\tau^2}, & \text{when } 3\tau^2>4\kappa^2
        \\
        -\frac{3}{2}\omega - \frac{3}{4}\tau  + \left(\frac{\sqrt{3}}{2}\omega  +\frac{\sqrt{3}}{4}\tau\right)i, & \text{when } 3\tau^2=4\kappa^2
        \\
        -\frac{3}{2}\omega - \frac{3}{4}\tau  + \left(\frac{\sqrt{3}}{2}\omega  +\frac{\sqrt{3}}{4}\tau\right)i \pm \frac{\sqrt{3}i+ 1}{4}\sqrt{4\kappa^2-3\tau^2}, & \text{when } 3\tau^2<4\kappa^2
    \end{cases}
\end{equation}
\end{widetext}
where we recall that the unperturbed repeated eigenvalue $\lambda_1$ generically splits into distinct eigenvalues $\lambda_\pm(\kappa,\tau)$ upon perturbation.

In the case of identical oscillators ($\tau=0$), the two eigenvalues with positive imaginary part take on a simple form
\begin{equation}\label{eq:linear_identical_eigenvalue_exact2}
\begin{split}
    \lambda_-(\kappa,0) &= \left(-\frac{3}{2} + \frac{\sqrt{3}}{2} i\right)\omega - \kappa \left( \frac{1}{2} + \frac{\sqrt{3}}{2}i \right)
    \\
    \lambda_+(\kappa,0) &=  \left(-\frac{3}{2} + \frac{\sqrt{3}}{2} i\right)\omega + \kappa \left( \frac{1}{2} + \frac{\sqrt{3}}{2}i \right)
\end{split}
\end{equation}

We now analyze the leading order behavior of the repeated, unperturbed eigenvalue, $\lambda_1= \left(-\frac{3}{2} + \frac{\sqrt{3}}{2}i\right)\omega$, as a function of both $\kappa$ and $\tau$.
Directly computing the matrix $\mathcal{M}$ in \eqref{eq:M matrix} yields
\begin{equation}
\scalebox{.85}{$
    \mathcal{M} = \begin{bmatrix}
        \kappa\left(\frac{1}{2}+\frac{\sqrt{3}}{2}i\right) - \tau\left(\frac{3}{8}-\frac{5\sqrt{3}}{8}i\right) & \frac{3}{2}\kappa + \frac{21}{16}\tau \\ 
        \tau\left(\frac{1}{2}-\frac{\sqrt{3}}{2}i \right) & -\kappa\left(\frac{1}{2}+\frac{\sqrt{3}}{2}i\right) - \tau\left(\frac{9}{8} + \frac{\sqrt{3}}{8}i\right)
    \end{bmatrix}
    $}
\end{equation}
with eigenvalues 
%\begin{widetext}
\begin{equation}
\begin{split}
    \lambda_{c\pm}(\kappa,\tau) &= -\tau\left(\frac{3}{4} - \frac{\sqrt{3}}{4}i\right) 
    \\
    &\quad\pm \frac{\sqrt{2}}{4}\sqrt{-4\kappa^2\left(1-\sqrt{3}i\right) + 3\tau^2\left(1-\sqrt{3}i\right)}
\end{split}
\end{equation}
%\end{widetext}
Consequently, our analysis predicts that the repeated eigenvalue, $\lambda_1$, generically splits into distinct eigenvalues
\begin{widetext}
\begin{equation}
    \lambda_\pm(\kappa,\tau) = \left(-\frac{3}{2} + \frac{\sqrt{3}}{2} i\right)\omega  + \left[ -\tau\left(\frac{3}{4} - \frac{\sqrt{3}}{4}i\right) \pm \frac{\sqrt{2}}{4}\sqrt{-4\kappa^2\left(1-\sqrt{3}i\right) + 3\tau^2\left(1-\sqrt{3}i\right)} \right] + o(\kappa) + o(\tau)
\end{equation}
\end{widetext}
which recovers the two eigenvalues of the exact solution \eqref{eq:discrete_eval_exact}; see Figure \ref{fig:id_eval_bif}.
A similar analysis may be employed for the complex conjugate eigenvalue $\overline{\lambda_1}$.

The splitting discriminant is given by
\begin{equation}\label{eq:discreteD}
    \mathcal{D}(\kappa,\tau) = \frac{\sqrt{3}i+ 1}{4}\sqrt{4\kappa^2-3\tau^2}
\end{equation}
One finds directly from the form of \eqref{eq:discreteD} that
\begin{equation}
    \kappa^* = \mathfrak{K}(\tau^*) = \pm\frac{\sqrt{3}}{2}|\tau^*|
\end{equation}
satisfies $\mathcal{D}(\mathfrak{K}(\tau^*),\tau^*)=0$.
In the $\tau,\kappa$ plane, this relationship defines a $Q$-synchronization boundary, see Figure \ref{fig:tongues} in \S\ref{ssec:Arnold-tongues}. 

Note that in this case, we include the multiplicative prefactor $\frac{\sqrt{3}i+1}{4}$ in \eqref{eq:discreteD} to emphasize that the discriminant in this case is neither purely real nor purely imaginary as in the previous two cases.
This fact is reflected in the right column of Figure \ref{fig:id_eval_bif}, which shows splitting of both the real and imaginary parts of the eigenvalues.

\begin{figure*}[ht]
    \centering
    \includegraphics[scale=.4]{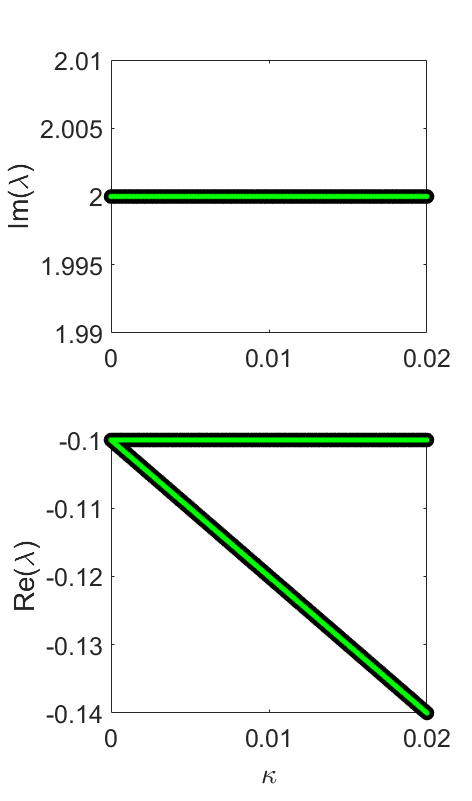}
    \includegraphics[scale=.4]{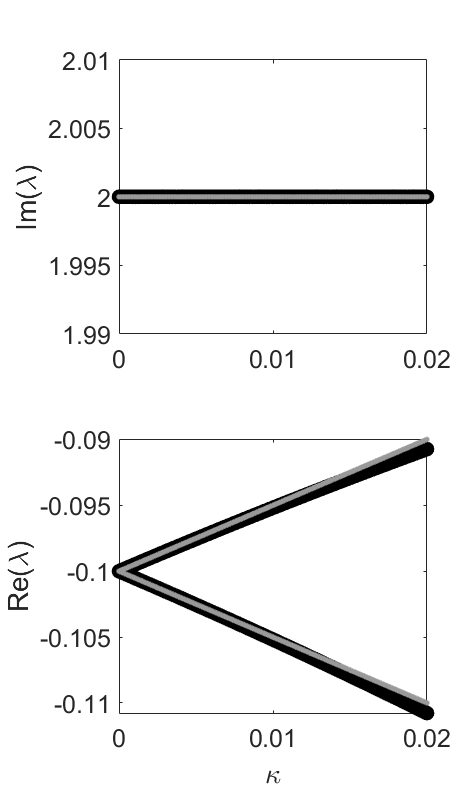}
    \includegraphics[scale=.4]{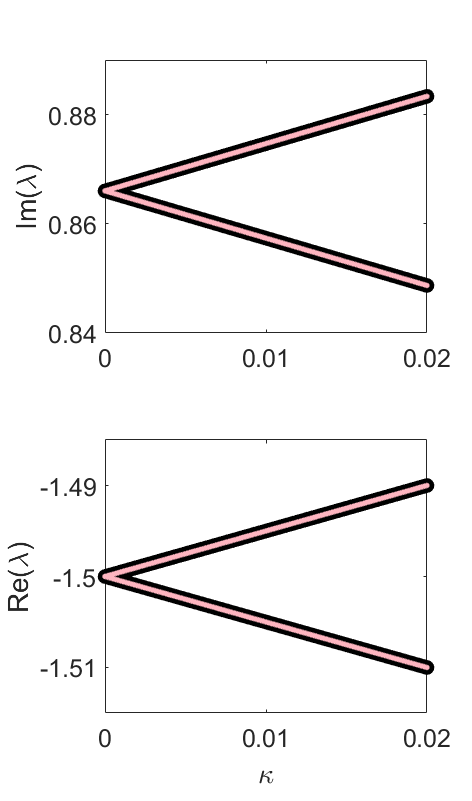}
    
    \hspace*{\fill}\textcolor{lightgray}{\rule{0.6\textwidth}{0.4pt}}\hspace*{\fill}
    \vspace{.15cm}
    
    \includegraphics[scale=.4]{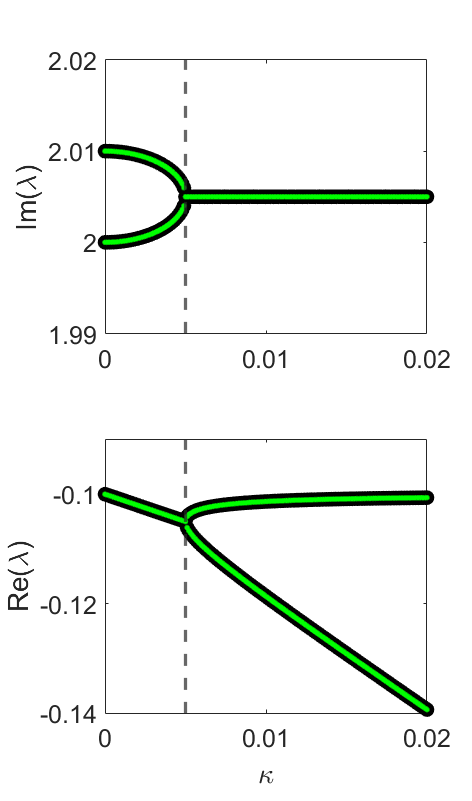}
    \includegraphics[scale=.4]{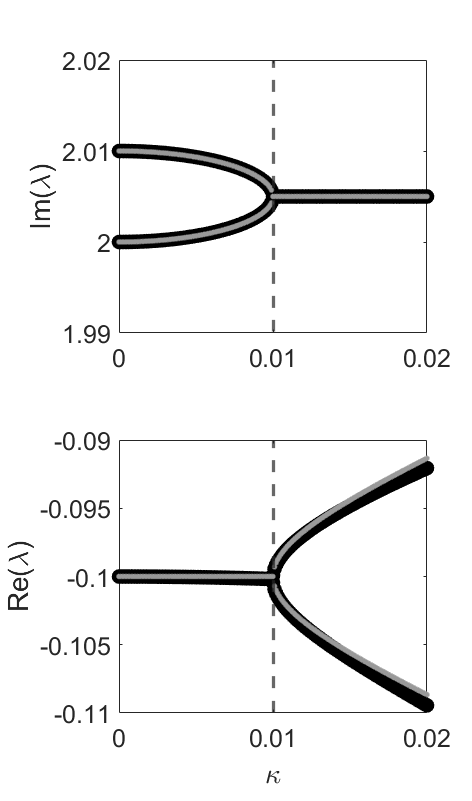}
    \includegraphics[scale=.4]{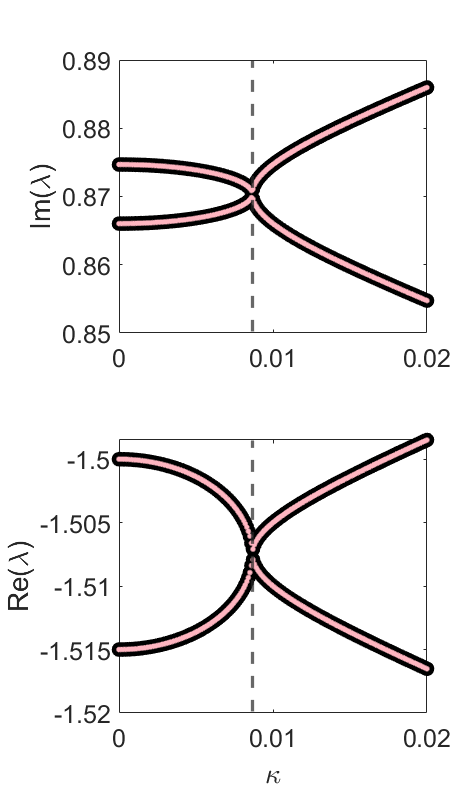}
    
    \caption{Eigenvalue bifurcations in the case of identical oscillators ($\tau=0$, top block) and in the case of non-identical oscillators ($\tau=0.01$, bottom block). Black lines denote ground truth, while colored lines denote the leading order analytic approximation.
    The top row of each block shows the imaginary parts of the eigenvalues, while the bottom row of each block shows the real parts.
    Vertical dotted lines denote the leading order approximation to the KT point.
    \textbf{Left:} 4D linear system with $\omega=2$ and $D=0.1$.
    \textbf{Middle:} 2D ring model with $\omega=2$ and $D=0.1$. \textbf{Right:} 9D discrete-state model with $\omega=2$.}
    \label{fig:id_eval_bif}
\end{figure*}

\subsection{Power spectra and cross-spectra}
\label{ssec:Examples-PowerSpectra}

In this section, we analyze the power spectra and cross-spectra of our three model systems in the case of non-identical coupled units $(\tau\neq 0)$.

As in the preceding sections, we denote the eigenvalues associated with the $Q$-functions of the joint system as $\lambda_\pm = \mu_\pm+i\omega_\pm$, which have positive imaginary part.
As always, the results presented in this section also hold for the complex conjugate eigenvalues with negative imaginary part, although we do not explicitly consider these eigenvalues in this section.

We begin with the power spectra.
In the case of our continuous-state examples, below the KT point, the eigenvalues have identical real parts, with $\mu_+=\mu_-$, but non-identical imaginary parts, with $\omega_+\neq \omega_-$.
Consequently, both power spectra are Lorentzian with equal half-width, but have peaks at different frequencies.
At the bifurcation point, the eigenvalues are identical with $\mu_+=\mu_-$ and $\omega_+=\omega_-$.
In consequence, the power spectra of the two oscillators are identical at the bifurcation point.
Above the bifurcation, the real parts of the eigenvalues are distinct, with $\mu_+\neq \mu_-$, but the imaginary parts are identical, with $\omega_+=\omega_-$.
Accordingly, each of the power spectra peak at the same frequency, but have different widths (recall \S\ref{ssec:PowerSpectra_and_CrossSpectralDensity}).

Physically, below the KT point each oscillator has a prominently peaked power spectrum, i.e., for weak coupling, the system behaves as two distinct stochastic oscillators.  
As the coupling increases, the power spectra ``collide'' and become identical at the KT point.
Importantly, the oscillators have identical (mean) frequencies in $Q$-function coordinates because the imaginary parts of the $Q$-function eigenvalues are identical.
In contrast, in their original coordinates, the power spectra of each oscillator have peaks which generically only asymptotically approach each other as coupling strength increases (see Figure \ref{fig:power spectra}, lightly shaded curves).
Above the KT point, the coupled system is \emph{robustly oscillatory}, i.e., there is a unique low-lying eigenmode (the $Q$-function) which dominates system dynamics at intermediate-long times.
Only one of the two power spectra remains prominent, the other flattens and becomes less prominent because the corresponding $Q$-function becomes a high-order mode after the bifurcation.
Recalling the discussion after Theorem \ref{theorem:Q function perturbation}, the low-lying $Q$-function above the bifurcation is analogous to the center-of-mass mode for a harmonic oscillator, and captures synchronous activity.
The complex argument of this $Q$-function is shown in Figure \ref{fig:physics}.
In regions where the magnitude of the $Q$-function is near 0, the color becomes black to indicate that the phase captured by this eigenmode is not robust and could change values very rapidly.
The magnitude of the center-of-mass eigenmode vanishes only at anti-synchronous solutions $y\approx x+\pi$.
In contrast, the non-dominant $Q$-function above the bifurcation is similar to the antiphase mode of a harmonic oscillator, and captures antisynchronous activity.
The complex argument (again scaled according to the magnitude of the $Q$-function) is shown in Figure \ref{fig:physics}, and indicates that this eigenmode captures only antisynchronous behavior.

\begin{figure}[ht]
    \centering
    \includegraphics[scale=.35]{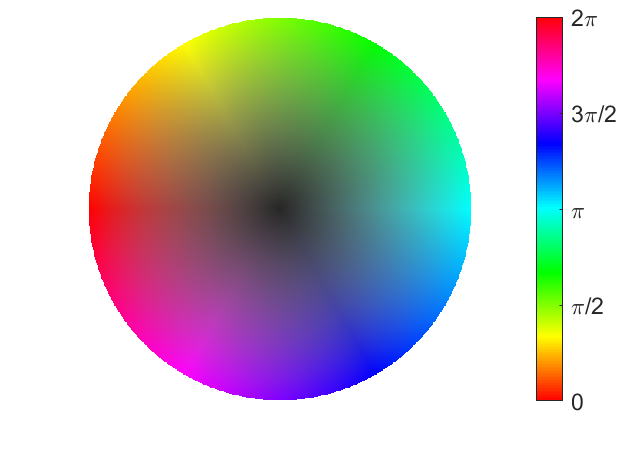}
    
    \includegraphics[scale=.35]{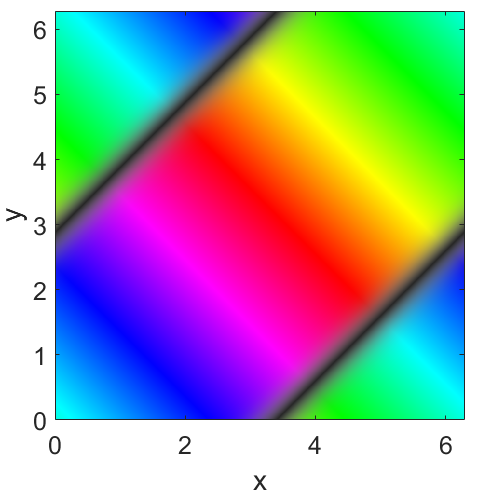}
    
    \includegraphics[scale=.35]{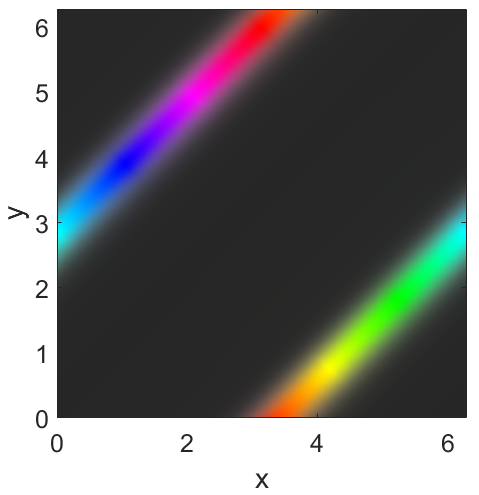}
    \caption{Center-of-mass and antiphase interpretation of the joint $Q$-functions of the 2D ring model \eqref{eq:2D Kuramoto model system}.
    The complex argument (stochastic asymptotic phase) of the center-of-mass $Q$-function (middle) and antiphase $Q$-function (bottom) are displayed according to a two-dimensional colormap (top). 
    The colormap is black when the magnitude of the $Q$-function vanishes, and takes on fully saturated color values when the magnitude  of the $Q$-function is maximal.
    Intermediate hues are linearly interpolated between these extremes as a function of the magnitude.}
    \label{fig:physics}
\end{figure}

In the case of the discrete-state system, the real and imaginary parts of the eigenvalues are distinct, except at the KT point.
While the the mean rotation of each oscillator is non-identical, the isochrons of each oscillator become identical after the KT point; see Appendix \ref{appendix:discrete system bifurcation}.
Figure \ref{fig:power spectra} depicts plots of the power spectra for each oscillator in our example systems, both in $Q$-function coordinates (darkly shaded curves) and in their original coordinates (lightly shaded curves).
Numerical simulations are in excellent agreement with the analytic results below, at, and above the KT point.

We also find that the eigenvalue bifurcation reflects a qualitative change in the cross-spectral density of the coupled systems.
Notably, the cross-spectra is purely real at the bifurcation point.
Results are summarized in Figures \ref{fig:cross spectra ou}, \ref{fig:cross spectra ring}, and \ref{fig:cross spectra discrete}.

\begin{figure*}[ht]
    \centering
    \includegraphics[scale=.35]{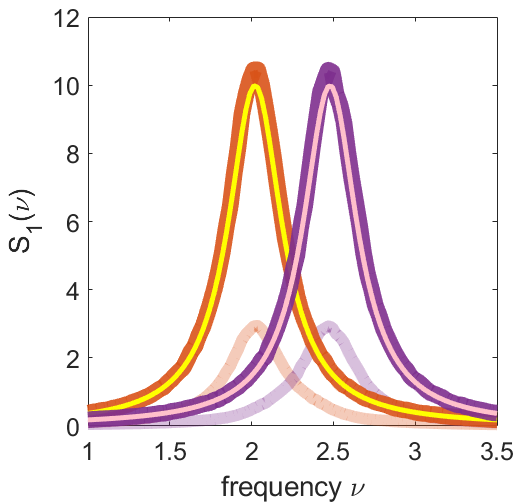}
    \includegraphics[scale=.35]{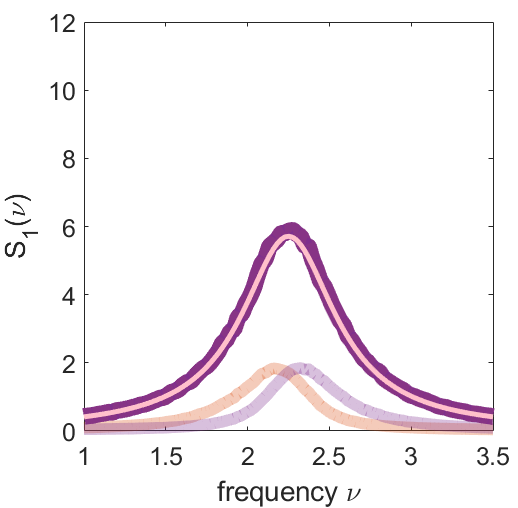}
    \includegraphics[scale=.35]{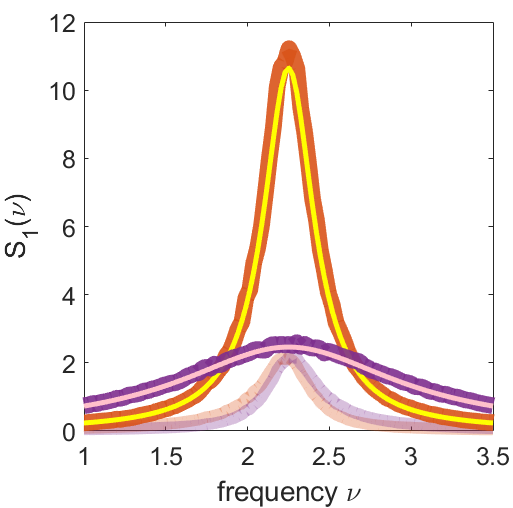}
    
    \includegraphics[scale=.35]{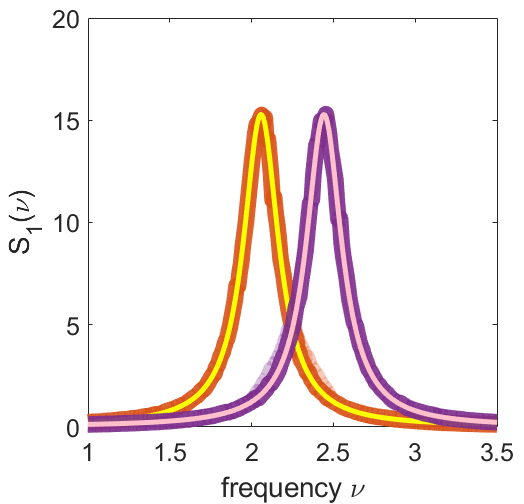}
    \includegraphics[scale=.35]{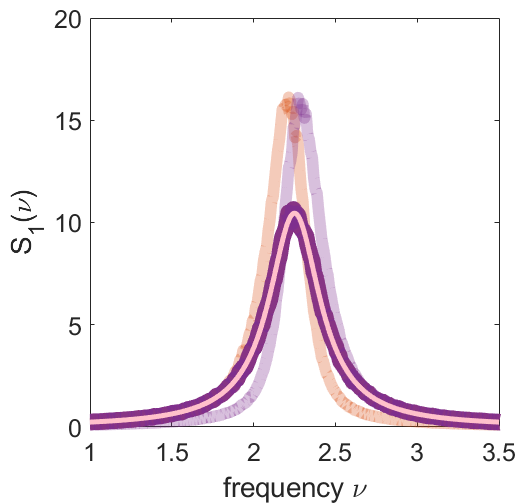}
    \includegraphics[scale=.35]{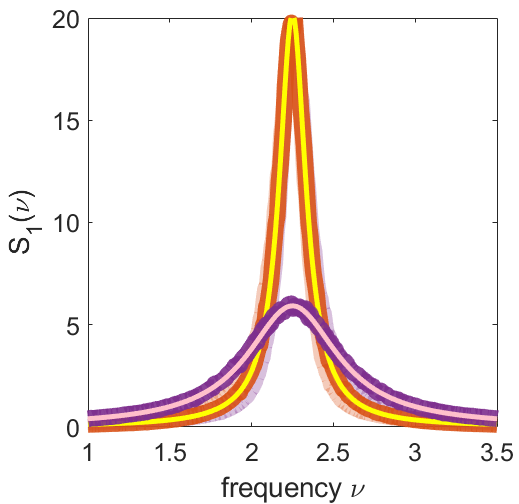}
    
    \includegraphics[scale=.35]{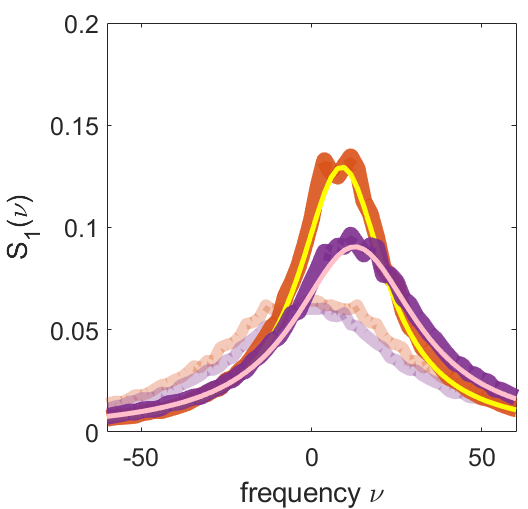}
    \includegraphics[scale=.35]{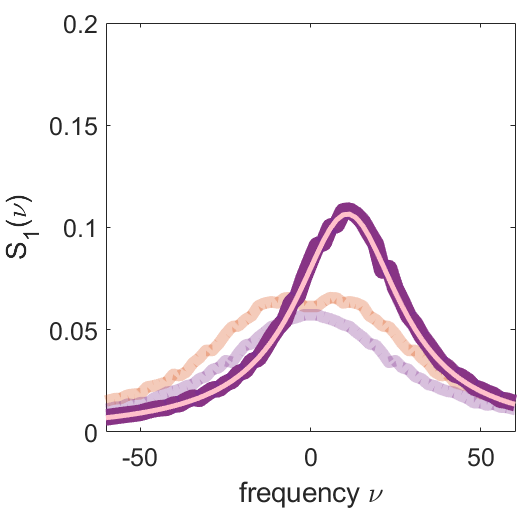}
    \includegraphics[scale=.35]{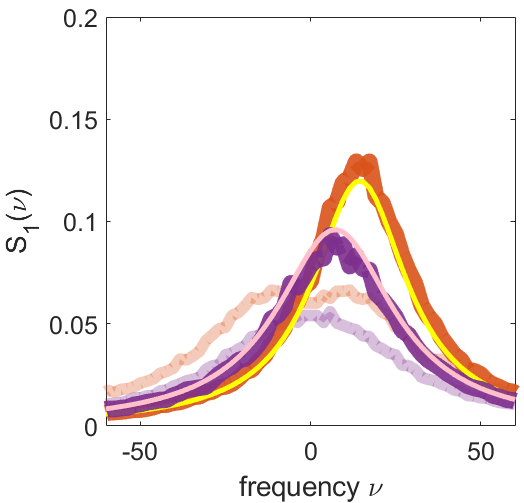}
    \caption{Power spectra below the KT point (left column), at the KT point (middle column), and above the KT point (right column) in $Q$-function coordinates (orange and purple, computations; yellow and pink, theory), and in the original coordinates (light orange and light purple). In $Q$-function coordinates,
    the power spectra of the two oscillators peak at identical frequencies after the bifurcation. \textbf{Top:} 4D linear model with parameters $\eta=0.1$, $D=0.1$, $\omega=2$, $\tau=0.5$, and coupling strengths $\kappa=0.1, \; 0.25, \; 0.4$.
    \textbf{Middle:} 2D ring model with parameters $\omega=2$, $\tau=0.5$, $D=0.1$, and coupling strengths $\kappa=0.2,\; 0.33596, \; 0.38$.
    The power spectra in original coordinates are obscured by those in $Q$-function coordinates in the left and right panels.
    \textbf{Bottom:} 9D discrete-state model with parameters $\omega=10$ and $\tau=5$, and coupling strengths $\kappa=2, \; 5\sqrt{3}/2, \; 6$.}
    \label{fig:power spectra}
\end{figure*}

\begin{figure*}[ht]
    \centering
    \includegraphics[scale=.35]{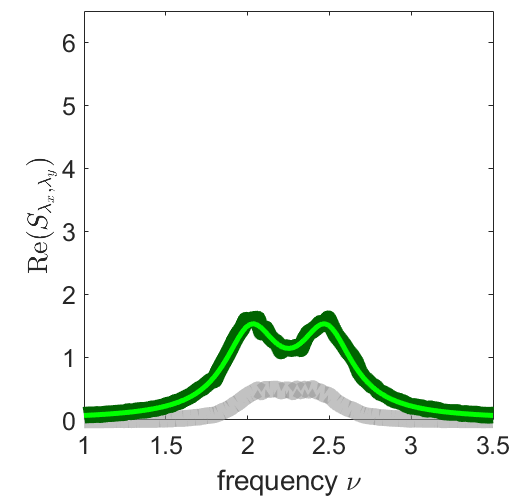}
    \includegraphics[scale=.35]{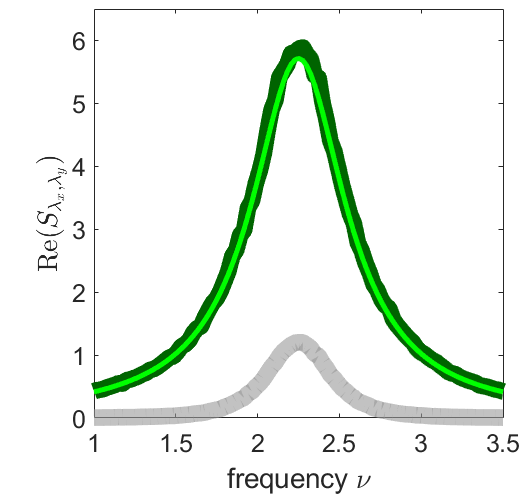}
    \includegraphics[scale=.35]{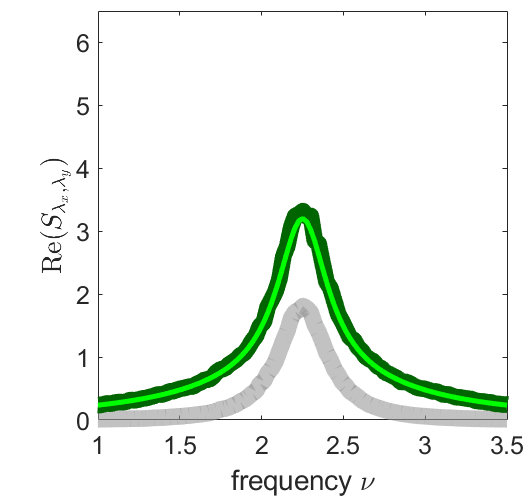}

    \includegraphics[scale=.35]{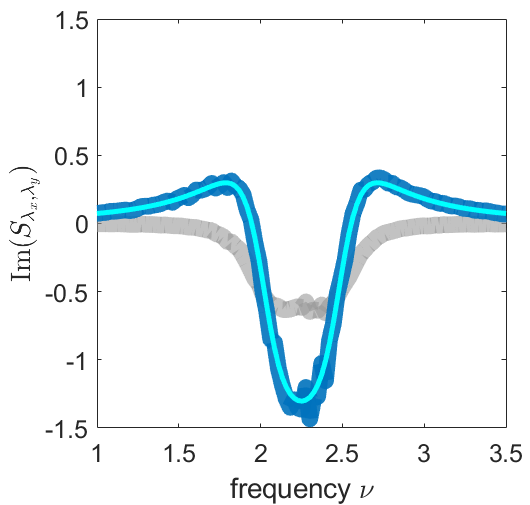}
    \includegraphics[scale=.35]{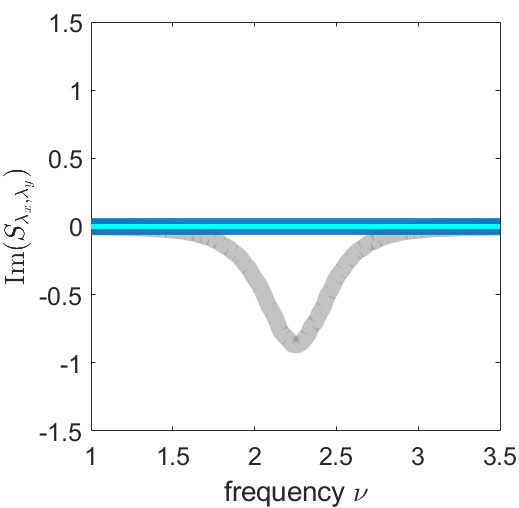}
    \includegraphics[scale=.35]{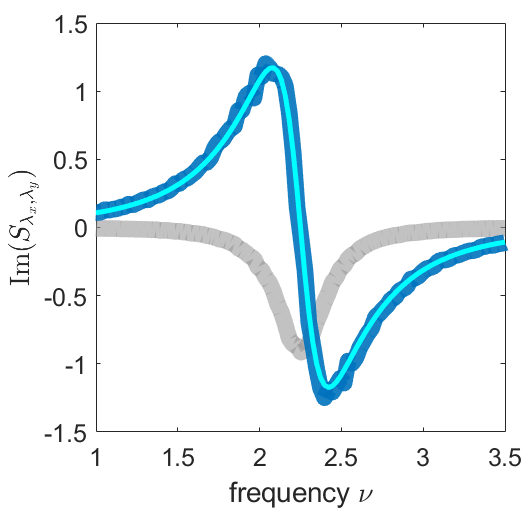}    
    \caption{Real and imaginary parts of the cross spectra below the bifurcation (left column), at the bifurcation (middle column), and above the bifurcation (right column) in $Q$-function coordinates (dark green and blue, computations; light green and blue, theory), and in the original coordinates (light gray). 
    The cross spectra in $Q$-function coordinates are purely real at the bifurcation. 
    Here, we show results for the 4D linear model \eqref{eq:4D linear model} with parameters $\eta=0.1$, $\omega=2$, $\tau=0.5$, $D=0.1$, and coupling strengths $\kappa=0.1, \; 0.25, \; 0.4$.}
    \label{fig:cross spectra ou}
\end{figure*}

\begin{figure*}
    \centering
    \includegraphics[scale=.35]{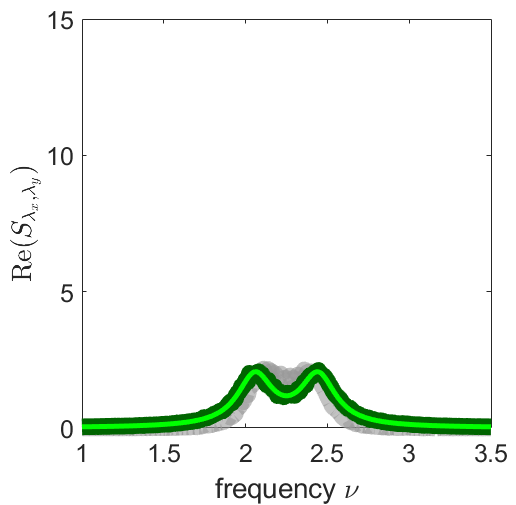}
    \includegraphics[scale=.35]{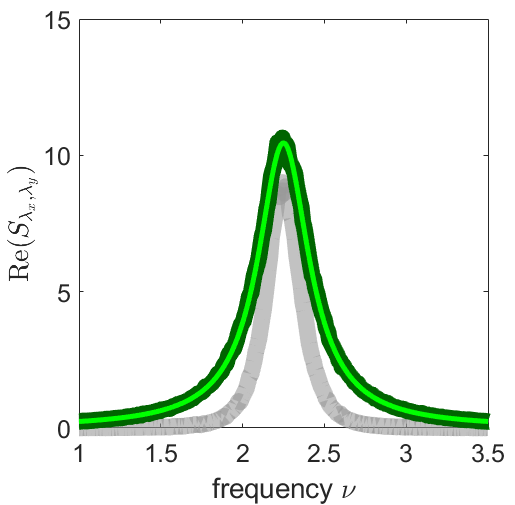}
    \includegraphics[scale=.35]{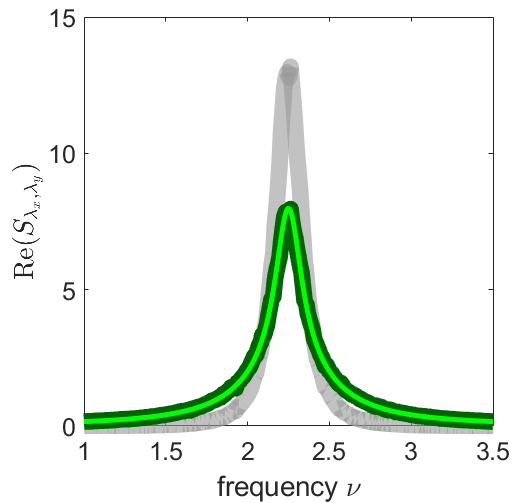}

    \includegraphics[scale=.35]{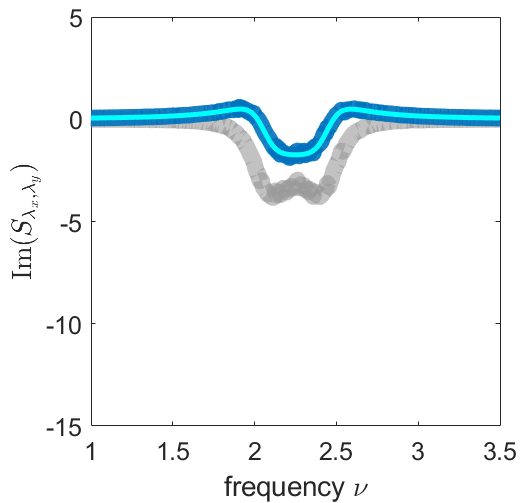}
    \includegraphics[scale=.35]{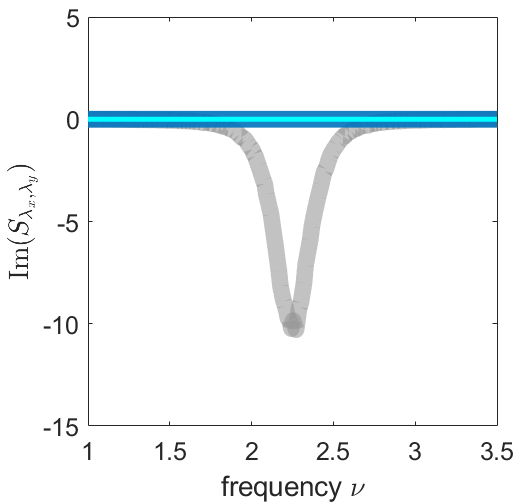}
    \includegraphics[scale=.35]{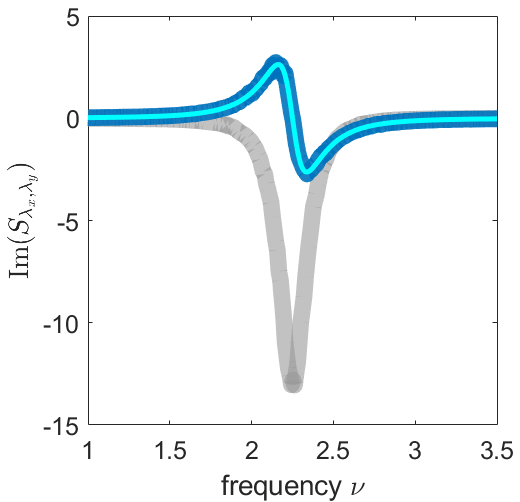}
    \caption{Real and imaginary parts of the cross spectra for the 2D ring model \eqref{eq:2D Kuramoto model system} with parameters $\omega=2$, $\tau=0.5$, $D=0.1$, and coupling strengths $\kappa=0.15,\; 0.33596, \; 0.45$.
    The color scheme is as in Figure \ref{fig:cross spectra ou}.}
    \label{fig:cross spectra ring}
\end{figure*}

\begin{figure*}
    \centering
    \includegraphics[scale=.35]{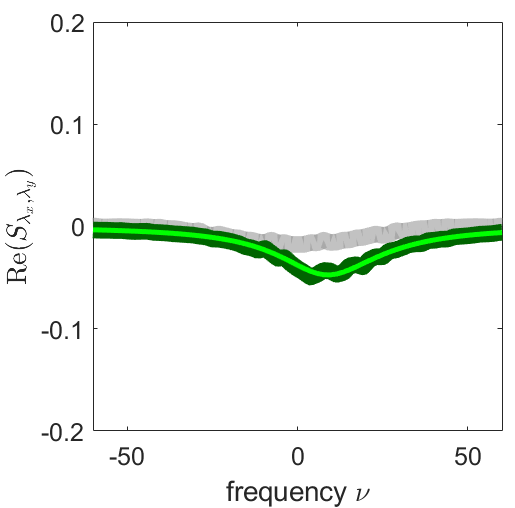}
    \includegraphics[scale=.35]{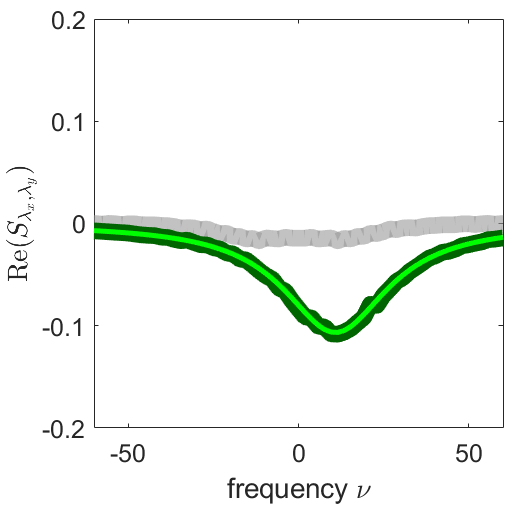}
    \includegraphics[scale=.35]{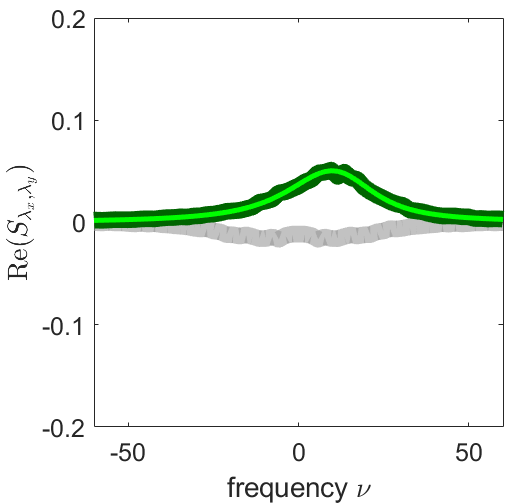}

    \includegraphics[scale=.35]{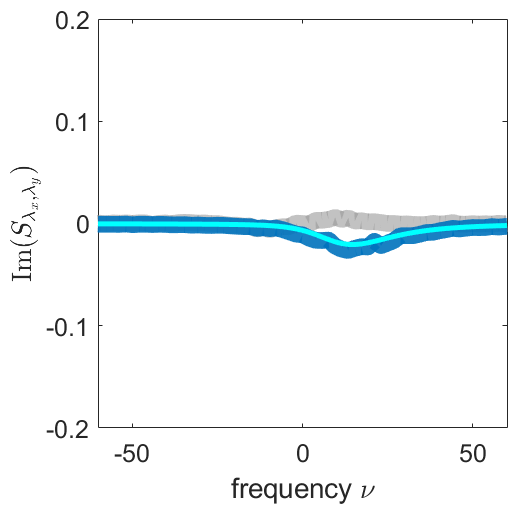}
    \includegraphics[scale=.35]{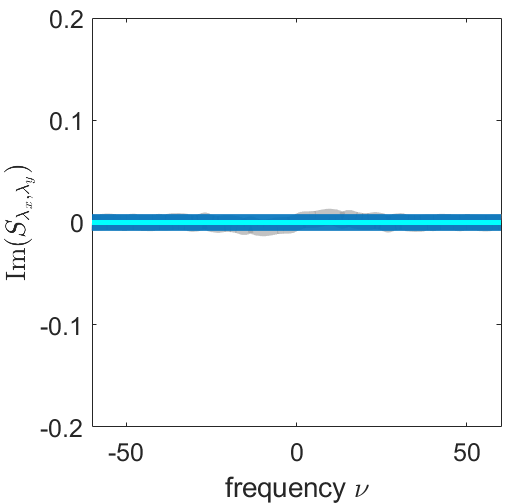}
    \includegraphics[scale=.35]{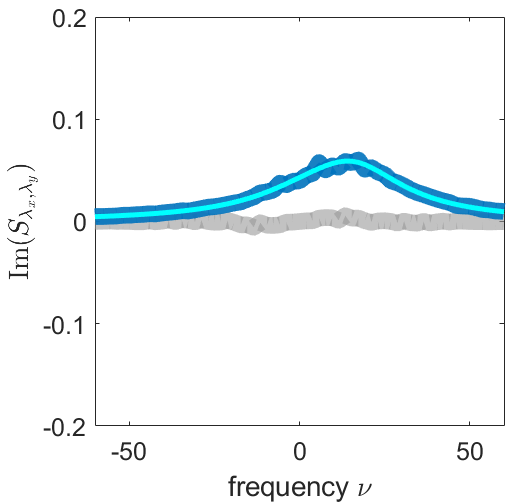}
    \caption{Real and imaginary parts of the cross spectra for the 9D discrete-state system \eqref{eq:discrete_coupled_system} with parameters $\omega=10$ and $\tau=5$, and coupling strengths $\kappa=2, \; 5\sqrt{3}/2, \; 6$.
    The color scheme is as in Figure \ref{fig:cross spectra ou}.}
    \label{fig:cross spectra discrete}
\end{figure*}

\subsection{Arnold tongues}\label{ssec:Arnold-tongues}

Here, we display synchronization regimes for each of our three models.
In \S\ref{subsection: eval bif}, we provided analytic approximations for the synchronization regimes of three distinct systems of coupled stochastic oscillators via Corollary \ref{corollary:splitting discriminant}.
Here, we  supplement theory with numerical simulations outside the small parameter regime.
For the 4D linear system and 9D discrete system, exact expressions are available for the SKO eigenvalues.
For the 2D ring model, we employ the method of continued fractions to accurately approximate the eigenvalues of the SKO (see Appendix \ref{appendix: continued fractions}).
We remark that our synchronization regimes bear a strong resemblance to $1:1$ mode-locking Arnold tongues for deterministic coupled oscillators \cite{arnold2012geometrical, guckenheimer2013nonlinear, izhikevich2007dynamical, schilder2007computing}.

\begin{figure*}[ht]
    \centering
    \includegraphics[scale=.43]{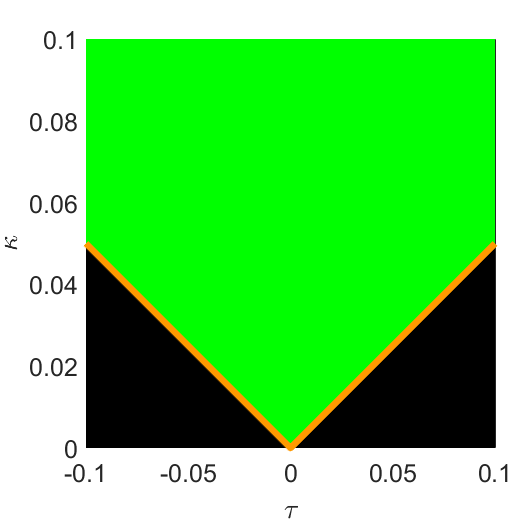}
    \includegraphics[scale=.43]{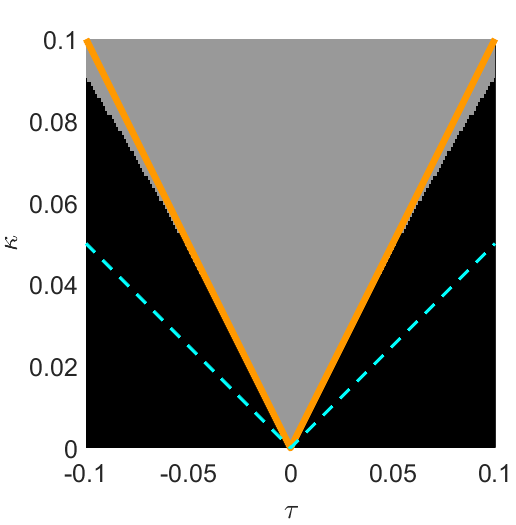}
    \includegraphics[scale=.43]{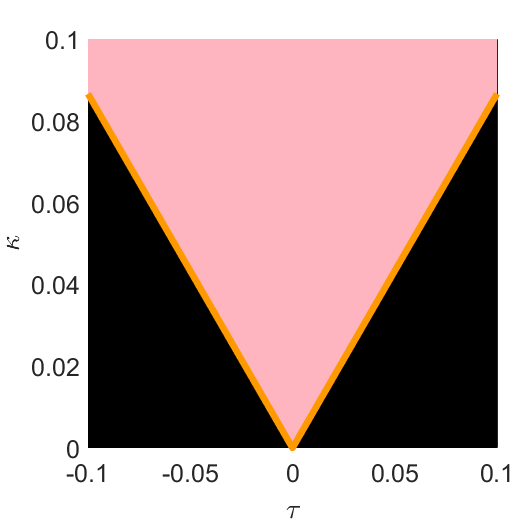}
    \caption{Arnold tongues for stochastic oscillators in the $\tau$,$\kappa$ plane. 
    Colored regions denote parameter pairs $(\tau,\kappa)$ which lie above the KT point, i.e., for which the system is $Q$-synchronized.
    The boundary of each colored region corresponds to $\tau,\kappa$ pairs for which the $Q$-function eigenvalues are repeated.
    The orange line is the analytic approximation of the $Q$-synchronization boundary obtained from Corollary \ref{corollary:splitting discriminant}.
    \textbf{Left:} 4D linear system with $\omega=2$ and $D=0.1$. The boundary is given by $\kappa^*={|\tau^*|}/{2}$ and is independent of noise intensity $D\neq 0$.
    \textbf{Middle:} 2D ring model with $\omega=2$ and $D=0.1$.
    When $D=0$, an exact expression is available for the synchronization boundary: $\kappa^*={|\tau^*|}/{2}$ (blue dashed line).
    When $D\neq 0$, the true boundary is not available analytically, but a leading order approximation is given by $\kappa^* = |\tau^*|$.
    \textbf{Right:} 9D discrete-state system with $\omega=2$. The boundary is given by $\kappa^*=\sqrt{3}|\tau^*|/2$.}
    \label{fig:tongues}
\end{figure*}

The synchronization boundary of the 2D ring model depends on the noise intensity, $D$. 
We find numerically in Figure \ref{fig:tongue_noise} (left panel) that our leading order approximation $\kappa^* = |\tau^*|$ is accurate even for large perturbations $\kappa$,$\tau$ when $D$ is large.
On the other hand, for smaller $D$ values, our leading order approximation is accurate only for small perturbations (Figure \ref{fig:tongue_noise}, middle and left panels).
We remark that the deterministic $(D=0)$ synchronization boundary is given by $\kappa^*={|\tau^*|}/{2}$.
For intermediate values of $\tau$ and $\kappa$, the $Q$-function synchronization region (Figure \ref{fig:tongue_noise}, middle panel, grey region) does not differ significantly from the deterministic Arnold tongue (dashed blue lines) although both boundaries differ from the leading-order analytic expression (orange curve).
For smaller values of $\tau$ and $\kappa$ (Figure \ref{fig:tongue_noise}, right panel) the leading-order approximation captures the $Q$-function synchronization boundary, which differs by a factor of two from the deterministic Arnold tongue boundary on this scale.  

\begin{figure*}[ht]
    \centering
    \includegraphics[scale=.43]{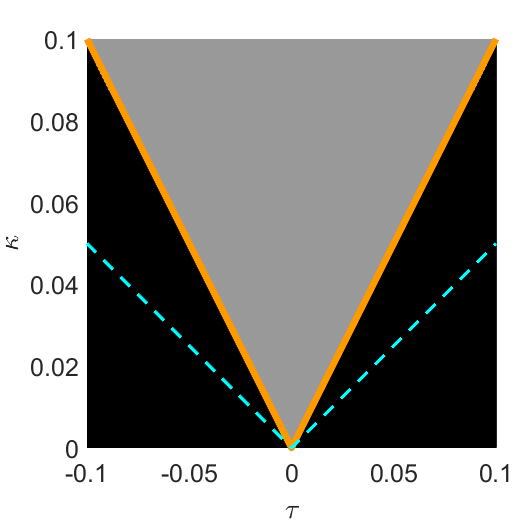}
    \includegraphics[scale=.43]{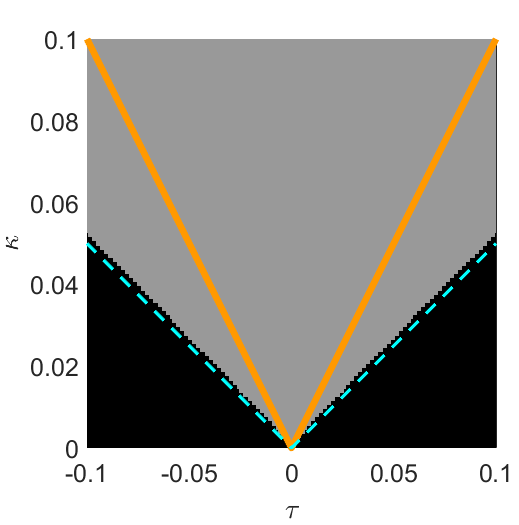}
    \includegraphics[scale=.43]{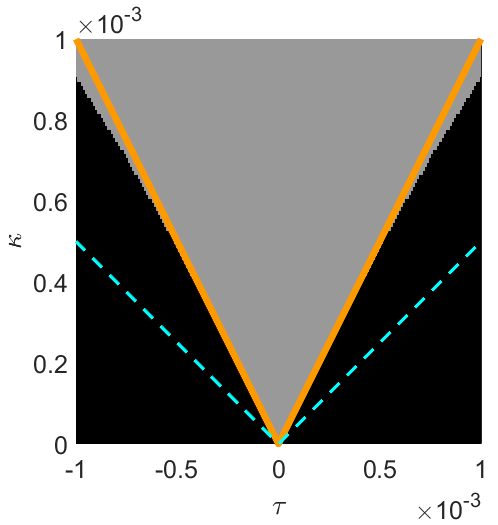}
    \caption{Synchronization boundary for the 2D ring model as a function of $D$ with parameter values and color-scheme as in Figure \ref{fig:tongues}.
    \textbf{Left:} $D=0.3$.  \textbf{Middle:} $D=0.001$. \textbf{Right:} $D=0.001$ (zoomed in, note change in scale).
    The leading order approximation $\kappa^*=|\tau^*|$ (orange) captures the synchronization boundary for small perturbations $\kappa,\tau$, but differs from the known deterministic synchronization boundary $\kappa^*={|\tau^*|}/{2}$ (blue).
    } 
    \label{fig:tongue_noise}
\end{figure*}

%%%%%%%%%%%%%%%%%%%%%%%%%%%%%%%%%%%%%%%%%%%%%%%%%%%%%%%%%%%%%%%%%%%%%%%%%%%%%%%%%%%%%%%%%%%%%%%%%%%%%%%%%%%%%%%%%%

\section{Discussion}\label{section:discussion}

Phase reduction approaches have proven  useful  for  studying  \textit{deterministic} oscillators, where the nonlinear change of coordinates from the original variables to (phase, amplitude) coordinates replaces the original nonlinear dynamics with simplified linear-affine dynamics \cite{perez2020global}.
Phase reduction is particularly effective for characterizing the synchronization behavior of  coupled deterministic oscillators \cite{park2021high}. 
Recently, it was shown that the $Q$-function, the slowest decaying mode of the backward operator (Stochastic Koopman operator, SKO), provides a comparable  simplifying nonlinear change of coordinates  for \textit{stochastic} oscillators \cite{perez2023universal}.
While there exist several previously proposed notions of `phase' and `synchronization' for stochastic oscillators, the literature to date is self-contradictory \cite{amro2015phase, callenbach2002oscillatory,deng2016measuring,ermentrout2009noisy,freund2000analytic,han1999interacting,medvedev2010synchronization,nandi2007effective,neiman1994synchronizationlike,zakharova2011analysing, zakharova2013coherence,zaks2003noise,zhang2008interacting}.
To the best of our knowledge, a mathematically founded  notion of phase for stochastic oscillators such as the mean--return-time phase\cite{schwabedal2013phase,cao2020partial}, phase defined from a variational equation\cite{BressloffMacLaurin2018variational,Maclaurin2023phase,AdamsMacLaurin2025isochronal}, or the stochastic asymptotic phase based on the SKO\cite{thomas2014asymptotic,thomas2019phase,perez2022quantitative,perez2023universal} has not previously been applied to study the synchronization  of systems of coupled stochastic oscillators.

In this manuscript, we fill this gap by extending the $Q$-function phase reduction framework to systems of symmetrically coupled stochastic oscillators.
We introduce a novel definition of synchronization for stochastic oscillators in terms of the eigenvalue spectrum of the SKO.
We demonstrate that qualitative changes in the eigenvalue behavior corresponds to qualitative changes in the power spectra and cross-spectral densities of the coupled system in $Q$-function coordinates.
Our definition of synchronization is conceptually consistent with deterministic synchronization based on phase reduction.
Yet it applies whether the stochastic oscillator arises as a deterministic limit cycle system perturbed by noise, or as a noise-induced oscillation (as in our coupled Ornstein-Uhlenbeck process example). 
In contrast, other notions of synchronization are either ad hoc, for example relying on  arbitrary thresholds for the  mean frequency difference, or else rely classical notions of phase which are not strictly well-defined for stochastic systems. 
Moreover, we show that the $Q$-synchronization boundary as a function of coupling strength and frequency difference closely resembles the boundary of $1:1$ Arnold tongues for deterministic oscillators \cite{arnold2012geometrical, guckenheimer2013nonlinear, izhikevich2007dynamical, schilder2007computing}.

Mauroy and Mezić studied the Koopman eigenvalue spectrum of \textit{deterministic} oscillatory systems both in the case of periodic limit cycle systems and in the case of quasiperiodic oscillatory systems \cite{mauroy2012use,mauroy2018global,mezic2020spectrum}.
In the limit cycle case, they showed that the Koopman operator has a unique purely imaginary minimum frequency eigenvalue $\lambda_1= i\omega$ corresponding to the deterministic limit cycle period $T=\frac{2\pi}{\omega}$ that generates a lattice of higher frequency purely imaginary eigenvalues.
In contrast, for a quasiperiodic solution, they showed that the Koopman operator has multiple purely imaginary eigenvalues with incommensurate frequencies, generating multiple incommensurate lattices of imaginary eigenvalues. 
For a system of two coupled deterministic limit cycle oscillators, within a 1:1 mode-locking region, we may view the system as possessing a single limit cycle.
In this case, standard phase reduction $\mbx\to\theta(\mbx)$ guarantees a low-lying Koopman eigenpair, namely the \(Q\)-function \(\exp[i\theta(\mathbf{x})]\) with eigenvalue \(\lambda_1 = i\omega\).  
Outside a 1:1 mode-locked tongue, a system of coupled deterministic oscillators may exhibit a variety of behaviors, including higher-order $p:q$ mode-locking and quasiperiodic behavior.  
For example, Mauroy and Mezić describe quasiperiodic orbits of two coupled van der Pol oscillators \cite{mauroy2012use}.
These observations suggest that a transition from quasiperiodicity to synchronization in the deterministic case, with increasing coupling strength, may be marked by a qualitative change in the Koopman eigenvalue spectrum.
If true, such a result would be analogous to the behavior seen in the stochastic case discussed here.  
More broadly, a Koopman-based framework appears to offer a universal description for oscillatory phenomena, encompassing both deterministic and stochastic oscillators as well as single and coupled systems.  
A formal investigation into the relationship between Koopman eigenvalues with and without noise, particularly in the context of Arnold tongues, remains an open topic for future study.

The present study limits itself to the consideration of synchronization for stochastic oscillators.
However, the framework put forth here is expected to hold for a variety of situations, some of which have already been considered in the literature.
One study analyzed the synchronization of a deterministic Kuramoto network and found that bifurcations in the leading Koopman eigenvalues identified regions of synchrony where the frequency of oscillation reached a consensus value \cite{hu2020koopman}.
Koopman spectral methods have also been implemented to study the entrainment of stochastic oscillators.
In \cite{doi1998spectral}, the entrainment of a stochastic van der Pol oscillator was studied using the spectrum of the Koopman operator.
Bifurcations in the dominant Koopman eigenvalues were observed, and provided a means to identify regions of $p:q$ mode-locking.
In \cite{tateno2000stochastic}, a similar approach was taken to analyze the entrainment of stochastic integrate-and-fire neuron models, where the observed eigenvalue bifurcations were used to construct regimes similar to Arnold tongues for deterministic oscillators.
The present manuscript generalizes these results to the case of coupled stochastic oscillators.

Our $Q$-synchronization  approach  offers an alternative approach complementing stochastic bifurcation theory. 
Classically, a stochastic bifurcation refers to a topological change in the stationary probability distribution of a given system as parameters are varied \cite{horsthemke1984noise}.
We remark that in the examples considered in this manuscript, no qualitative change was observed in the stationary distribution below or above the bifurcation point of the SKO eigenvalues.
Our eigenvalue approach incorporates long term \textit{dynamical} information, as opposed to static information captured by the stationary distribution, i.e., the $Q$-function approach captures qualitative changes in phase, isochrons, mean frequency, and power spectra which are not well captured with just a stationary analysis.

The framework put forth in this manuscript opens up several further avenues for future investigation.
Here, we limited consideration to systems which consist of only two symmetrically coupled oscillators in the $1:1$ mode-locked regime.
Consideration of systems of $n\geq 3$ coupled oscillators, of systems with non-symmetric coupling, or of systems which admit solutions in the $p:q$ mode-locked regime are natural next steps.
%Another goal is to characterize the transition from chaos to synchronization, or to handle the case of overlapping Arnold tongues, i.e., the transition from a $p:q$ to $p':q'$ mode-locked state, in terms of the eigenvalues of the SKO.

The present work develops a theory for the synchronization of stochastic oscillators assuming that the $Q$-functions of the jointly coupled system are known.
However, in practice, obtaining $Q$-functions for systems in $n\geq 3$ dimensions is a challenging numerical task.
Mesh-free techniques such as extended dynamic mode decomposition (EDMD) \cite{brunton2016koopman,brunton2021modern,colbrook2023mpedmd, mezic2022numerical,korda2018convergence,williams2015data}, radial basis function (RBF) approaches \cite{buhmann2000radial,fasshauer2007meshfree,platte2004computing}, or machine-learning methods \cite{li2019data, zhai2022deep, brunton2024promising} may prove useful here.
%may provide promising avenues of investigation.
Synchronization of stochastic oscillators is of interest also in cases for which one lacks an explicit underlying SDE model, or for which one can only observe one of several coordinates  (e.g., voltage  time-series from EEG recordings). 
Numerical methods currently under development enable one to extract the stochastic asymptotic phase and (or) the $Q$-function itself from time series using data-driven Koopman oscillator techniques\cite{MellandCurtu2023attractor,Houzelstein2024generalized,williams2015data}.
The theory developed here offers a test for synchronization: transforming data to empirically-obtained $Q$-function coordinates, one can then compute the power spectra and cross-spectral density and observe whether or not a given system is within a $Q$-synchronization regime.

%%%%%%%%%%%%%%%%%%%%%%%%%%%%%%%%%%%%%%%%%%%%%%%%%%%%%%%%%%%%%%%%%%%%%%%%%%%%%%%%%%%%%%%%%%%%%%%%%%%%%%%%%%%%%%%%%%

\begin{acknowledgments}
This work was supported in part by NSF grant DMS-2052109. 
This work was supported in part by the Oberlin College Department of Mathematics.

\end{acknowledgments}

\section*{Author Declarations}
The authors have no conflicts to disclose.

%%%%%%%%%%%%%%%%%%%%%%%%%%%%%%%%%%%%%%%%%%%%%%%%%%%%%%%%%%%%%%%%%%%%%%%%%%%%%%%%%%%%%%%%%%%%%%%%%%%%%%%%%%%%%%%%%%

\section*{Data Availability Statement}

The data that support the findings of this study are openly available in the GitHub repository \url{https://github.com/MaxKreider/Arnold_Tongues_for_Stochastic_Oscillators}.

%%%%%%%%%%%%%%%%%%%%%%%%%%%%%%%%%%%%%%%%%%%%%%%%%%%%%%%%%%%%%%%%%%%%%%%%%%%%%%%%%%%%%%%%%%%%%%%%%%%%%%%%%%%%%%%%%%

\appendix

\section{Stationary eigenfunction perturbation}\label{appendix:stationary eigenfunction perturbation}

Here, we give an analytic expression for the leading order correction to the stationary distribution for the 2D ring system
\begin{equation}\label{eq:appendix_ring_model}
    \begin{split}
        x' &= \omega + \tau + \kappa \sin(y-x) + \sqrt{2D}\xi_1(t)
        \\
        y' &= \omega + \kappa \sin(x-y) + \sqrt{2D}\xi_2(t)
    \end{split}
\end{equation}
The forward equation for the stationary distribution is given by
\begin{equation}
    [\mathcal{L}_x + \mathcal{L}_y + \tau\mathcal{L}_\tau + \kappa\mathcal{L}_\kappa]\mathcal{P}(x,y) = 0
\end{equation}
where
\begin{equation}
    \begin{split}
        [\mathcal{L}_x + \mathcal{L}_y]\mathcal{P}(x,y) &= \Big(-\omega\partial_x\mathcal{P}(x,y) -\omega\partial_y \mathcal{P}(x,y) 
        \\
        &\qquad+ D\partial_{xx}\mathcal{P}(x,y) + D\partial_{yy}\mathcal{P}(x,y)\Big)
        \\
        \tau\mathcal{L}_\tau \mathcal{P} &= -\tau \partial_x\mathcal{P}(x,y)
        \\
        \kappa\mathcal{L}_\kappa\mathcal{P} &= -\kappa\Big(\partial_x[\sin(y-x)\mathcal{P}(x,y)]
        \\
        &\qquad+\partial_y[\sin(x-y)\mathcal{P}(x,y)]\Big)
    \end{split}
\end{equation}
The forward equation is augmented with $2\pi$-periodic boundary conditions: $\mathcal{P}(0,0) = \mathcal{P}(0,2\pi) = \mathcal{P}(2\pi,0) = \mathcal{P}(2\pi, 2\pi)$.
By Lemma \ref{lemma:stationary distribution perturbation}, the leading order correction for the stationary distribution, $\mathcal{P}_c(x,y)$, satisfies
\begin{equation}\label{eq:appendix_ring_equation}
     (\mathcal{L}_x + \mathcal{L}_y) \mathcal{P}_c(\mbx,\mby) = -(\tau\mathcal{L}_\tau + \kappa\mathcal{L}_\kappa)[P_0(\mbx) P_0(\mby)]
\end{equation}
where the unperturbed stationary distribution is given by $P_0(x)P_0(y)=\frac{1}{4\pi^2}$.
Writing out \eqref{eq:appendix_ring_equation} explicitly gives
\begin{equation}
\begin{split}
    &\left[-\omega\partial_x -\omega\partial_y  + D\partial_{xx} + D\partial_{yy}(x,y)\right]\mathcal{P}_c(x,y) = 
    \\
    &\qquad-\frac{\kappa}{2\pi^2}\cos(y-x)
\end{split}
\end{equation}
One can verify that
\begin{equation}
    \mathcal{P}_c(x,y) = \kappa\frac{\cos(y-x)}{4D\pi^2}
\end{equation}
satisfies \eqref{eq:appendix_ring_equation} with periodic boundary conditions.
Consequently, to leading order in $\kappa$ and $\tau$, the stationary distribution of \eqref{eq:appendix_ring_model} is given by
\begin{equation}
    \mathcal{P}_0(x,y) = \frac{1}{4\pi^2}\left(1 + \kappa\frac{\cos(y-x)}{D}\right) + o(\kappa) + o(\tau)
\end{equation}

\section{Coupled system without a bifurcation}\label{appendix:no bifurcation}

Here, we consider an example for which the there do not exist real-valued perturbations of the form $\kappa^*=\mathfrak{K}(\tau^*)$ which satisfy $\mathcal{D}(\mathfrak{K}(\tau^*),\tau^*) = 0$.
We consider the system
\begin{equation}\label{eq:ring_appendix_model}
    \begin{split}
        x' &= \omega + \tau + \kappa \cos(y-x) + \sqrt{2D}\xi_1(t)
        \\
        y' &= \omega + \kappa \cos(x-y) + \sqrt{2D}\xi_2(t)
    \end{split}
\end{equation}
with periodic boundary conditions on $[0,2\pi)\times[0,2\pi)$.
We solve the related SKO eigenvalue problem with the method of continued fractions (see Appendix \ref{appendix: continued fractions}).

The unperturbed $(\kappa=\tau=0$) eigenvalue-eigenfunction pairs of \eqref{eq:ring_appendix_model} are given by \eqref{eq:ring_unperturbed_efunctions}. 
Directly computing the matrix $\mathcal{M}$ (see equation \eqref{eq:M matrix}) for the repeated eigenvalue $\lambda_1=-D+i\omega$ yields
\begin{equation}
    \mathcal{M} = \begin{bmatrix}
        i\tau & \frac{\kappa}{2}i
        \\
        \frac{\kappa}{2}i & 0
    \end{bmatrix}
\end{equation}
The splitting determinant is see to be
\begin{equation}
    \mathcal{D}(\kappa,\tau) = \sqrt{-\tau^2 - \kappa^2}
\end{equation}
By Corollary \ref{corollary:splitting discriminant}, nonzero perturbations $\kappa^*,\tau^*$ that satisfy $\mathcal{D}(\kappa^*,\tau^*)=0$ satisfy the relation
\begin{equation}
    \kappa^* = \mathfrak{K}(\tau^*) = \pm i |\tau^*| \frac{\sqrt{(i-0)^2}}{i} = \pm i |\tau^*| 
\end{equation}
Consequently, there are no non-trivial real-valued perturbations $\kappa^*,\tau^*$ such that $\mathcal{D}(\kappa^*,\tau^*)=0$.
The system \eqref{eq:ring_appendix_model} does not exhibit $Q$-synchronization for real-valued perturbations.
Figure \ref{fig:appen_eval} shows the SKO eigenvalues of \eqref{eq:ring_appendix_model} when $\tau\neq 0$.

\begin{figure}[ht]
    \centering
    \includegraphics[scale=.45]{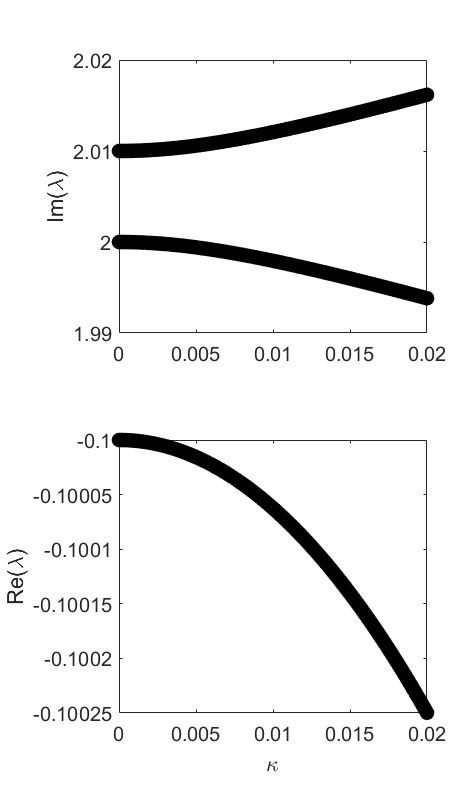}
    \caption{The imaginary parts (top) and real parts (bottom) of the $Q$-function eigenvalues associated with \eqref{eq:ring_appendix_model}. No bifurcation occurs for real-valued perturbations $\kappa$ and $\tau$, which is predicted by our analysis.}
    \label{fig:appen_eval}
\end{figure}

\section{Gauge transformation}\label{appendix:gauge_transformation}

Here, we consider the rotational ambiguity inherent in the $Q$-functions of two coupled stochastic oscillators, and derive a gauge transformation to eliminate this ambiguity.

Consider a function $Q_1^*(\mbx)$ for $\mbx \in \mathbb{R}^n$.
Let $Q_2^*(\mbx)=Q_1^*(\mbx)e^{i\xi}$, for some unknown $\xi\in [0,2\pi)$.
Define an objective function
\begin{equation}
    f(\alpha) = \langle |Q_1^*(\mbx)-Q_2^*(\mbx)e^{i\alpha}|^2\rangle
\end{equation}
where
\begin{equation}
    \langle Q^*(\mbx) \rangle = \int_{\mathbb{R}^n} d\mbx \; Q^*(\mbx)P_0(\mbx)
\end{equation}
and $P_0(\mbx)$ is the corresponding stationary distribution of the system in question.
Our goal is to minimize $f$ with respect to $\alpha$.
First, note that
\begin{equation}
    \begin{split}
        f(\alpha) &= \int_{\mathbb{R}^n} d\mbx \; \Big( Q_1^*(\mbx)Q_1(\mbx) - Q_1^*(\mbx)Q_2(\mbx) e^{-i\alpha} 
        \\
        &\quad- Q_2^*(\mbx)Q_1(\mbx)e^{i\alpha} + Q_2^*(\mbx)Q_2(\mbx)\Big)P_0(\mbx)
        \\
        &= 2 - \int_{\mathbb{R}^n} d\mbx \; Q_1^*(\mbx)Q_2(\mbx)e^{-i\alpha}P_0(\mbx) 
        \\
        &\quad- \int_{\mathbb{R}^n} d\mbx \; Q_2^*(\mbx)Q_1(\mbx)e^{i\alpha}P_0(\mbx)
    \end{split} 
\end{equation}
Simplifying $\frac{d}{d\alpha} f(\alpha) = 0$ yields the expression
\begin{equation}
    \alpha = \frac{1}{2}\log \frac{\int_{\mathbb{R}^n} d\mbx \; Q_1^*(\mbx)Q_2(\mbx) P_0(\mbx)}{\int_{\mathbb{R}^n} d\mbx \; Q_2^*(\mbx)Q_1(\mbx) P_0(\mbx)} + n\pi,\quad n\in \mathbb{Z}
\end{equation}
The integer $n$ arises from consideration of the infinitely-many branches of a complex logarithm.
If $n=0$, then
\begin{equation}
    f(\alpha_{n=0}) = 2 - 2 \left| \int_{\mathbb{R}^n} d\mbx \; Q_1(\mbx)Q_2^*(\mbx)P_0(\mbx)  \right|
\end{equation}
and if $n=\pm 1$, then
\begin{equation}
    f(\alpha_{n=\pm 1}) = 2 + 2 \left| \int_{\mathbb{R}^n} d\mbx \; Q_1(\mbx)Q_2^*(\mbx)P_0(\mbx)  \right|
\end{equation}
Consequently, the principal branch of the logarithm solves the minimization problem.

\section{Computation of the matrix $M$}\label{appendix:M compuation}

Here, we provide details for the computation of the matrix $\mathcal{M}$ for a specific example.
We consider the linear system
\begin{equation}\label{eq:appendix_linear_system}
    \begin{split}
        x_1' &= -\eta x_1 +(\omega + \tau) x_2 + \kappa (y_1-x_1) + \sqrt{2D}\xi_1(t)
        \\
        x_2' &= -(\omega + \tau) x_1 - \eta x_2 + \kappa (y_2-x_2) + \sqrt{2D}\xi_2(t)
        \\
        y_1' &= -\eta y_1 +\omega y_2 + \kappa (x_1-y_1) + \sqrt{2D}\xi_3(t)
        \\
        y_2' &= -\omega y_1 - \eta y_2 + \kappa (x_2-y_2) + \sqrt{2D}\xi_4(t)
    \end{split}
\end{equation}
Let $\mbx=[x_1,x_2]^T$ and let $\mby=[y_1,y_2]^T$.
The unperturbed low-lying eigenvalues and eigenfunctions of the SKO and Fokker-Planck operator associated with \eqref{eq:appendix_linear_system} are given by
\begin{equation}
    \begin{split}
        \lambda_1 &= -\eta + i\omega
        \\
        P_0(\mbx,\mby) &= \frac{1}{4\pi^2}\frac{\eta^2}{D^2}\exp\left(-\frac{\eta}{2D}(x_1^2+x_2^2+x_3^2+x_4^2)\right)
        \\
        Q^*_{1x}(\mbx,\mby) &= (x_2+ix_1)\sqrt{\frac{\eta}{2D}}
        \\
        Q^*_{1y}(\mbx,\mby) &= (y_2+iy_1)\sqrt{\frac{\eta}{2D}}
        \\
        P_{1x}(\mbx,\mby) &= \frac{1}{4\sqrt{2}\pi^2}\left(\frac{\eta}{D}\right)^{5/2}\;\times
        \\
        &\quad(x_2-ix_1)\exp\left(-\frac{\eta}{2D}(x_1^2+x_2^2+x_3^2+x_4^2)\right)
        \\
        P_{1y}(\mbx,\mby) &= \frac{1}{4\sqrt{2}\pi^2}\left(\frac{\eta}{D}\right)^{5/2}\;\times
        \\
        &\quad(y_2-iy_1)\exp\left(-\frac{\eta}{2D}(x_1^2+x_2^2+x_3^2+x_4^2)\right)
    \end{split}
\end{equation}
Now, we compute the matrix $\mathcal{M}$ (recall equation \eqref{eq:M matrix}).
Note that
\begin{equation}
\begin{split}
    \tau \mathcal{L}_\tau^\dagger &= \tau\left[x_2 \partial_{x_1} - x_1 \partial_{x_2} \right]
    \\
    \kappa\mathcal{L}_\kappa^\dagger &= \kappa[(y_1-x_1)\partial_{x_1} + (y_2-x_2)\partial_{x_2} 
    \\
    &\quad+ (x_1-y_1)\partial_{y_1} + (x_2-y_2)\partial_{y_2}]
\end{split}
\end{equation}
Consequently, one has that
\begin{equation}\label{eq:ou_appendix_stuff}
    \begin{split}
         \tau \mathcal{L}_\tau^\dagger [Q_{1x}^*(\mbx,\mby)] &= \tau (-x_1 + ix_2)\sqrt{\frac{\eta}{2D}}
         \\
         \tau \mathcal{L}_\tau^\dagger [Q_{1y}^*(\mbx,\mby)] &= 0
         \\
         \kappa\mathcal{L}_{\kappa}^\dagger [Q_{1x}^*(\mbx,\mby)] &= \kappa[(y_1-x_1)i + (y_2-x_2)]\sqrt{\frac{\eta}{2D}}
         \\
         \kappa\mathcal{L}_{\kappa}^\dagger [Q_{1y}^*(\mbx,\mby)] &= \kappa[(x_1-y_1)i + (x_2-y_2)]\sqrt{\frac{\eta}{2D}}
    \end{split}
\end{equation}
Directly evaluating the integrals in \eqref{eq:M matrix} reveals that
\begin{equation}
    \mathcal{M}(\kappa,\tau) = \begin{bmatrix}
        i\tau - \kappa & \kappa
        \\
        \kappa & -\kappa
    \end{bmatrix}
\end{equation}
as desired.

\section{Continued fractions}\label{appendix: continued fractions}

Here, we sketch the implementation of a continued fraction (CF) approach (following the general procedure described in \cite{risken1996fokker}) to solve the 2D ring model, which we recall is given by
\begin{equation}\label{eq: appendix 2D Kuramoto model system}
    \begin{split}
        x' &= \omega + \tau + \kappa \sin(y-x) + \sqrt{2D}\xi_1(t)
        \\
        y' &= \omega + \kappa \sin(x-y) + \sqrt{2D}\xi_2(t)
    \end{split}
\end{equation}
The backward equation, expressed in the form $\mathcal{L}^\dagger[Q^*]-\lambda Q^* = 0$, corresponding to \eqref{eq: appendix 2D Kuramoto model system} is 
\begin{equation}\label{eq: appendix 2D ring backwards operator}
\begin{split}
    &[\omega+\tau + \kappa\sin(y-x)]\partial_x Q^* + [\omega + \kappa\sin(x-y)] \partial_y Q^* 
    \\
    &\quad+ D \partial_{xx} Q^* + D \partial_{yy} Q^* - \lambda Q^* = 0
\end{split}
\end{equation}
along with $2\pi$-periodic boundary conditions
\begin{equation}\label{eq; appendix 2D ring boundary conditions}
    Q^*(0,0) = Q^*(0,2\pi)=Q^*(2\pi,0) = Q^*(2\pi,2\pi)
\end{equation}

To begin, we adopt a double Fourier ansatz
\begin{equation}\label{eq:ansatz}
    Q^* =\sum_{k=-\infty}^\infty\sum_{j=-\infty}^\infty c_{j,k}e^{i(jx+ky)}
\end{equation}
Insertion of \eqref{eq:ansatz} into \eqref{eq: appendix 2D ring backwards operator} and rearranging gives
%\begin{widetext}
\begin{equation}
\begin{split}
    \sum_{k=-\infty}^\infty&\sum_{j=-\infty}^\infty e^{i(jx+ky)} \Bigg(  c_{j-1,k+1}\Bigg[\frac{\kappa}{2}(k-j+2)\Bigg] 
    \\
    &+ c_{j,k}\Bigg[-D(j^2+k^2)+i((\omega+\tau)j+\omega k)-\lambda \Bigg] 
    \\
    &+ c_{j+1,k-1}\Bigg[\frac{\kappa}{2}(j-k+2)\Bigg] \Bigg)  = 0
\end{split}
\end{equation}
%\end{widetext}
By orthogonality, one finds that
\begin{equation}
\begin{split}
     &c_{j-1,k+1}\Bigg[\frac{\kappa}{2}(k-j+2)\Bigg] 
     \\
     &\quad+ c_{j,k}\Bigg[-D(j^2+k^2)+i((\omega+\tau)j+\omega k)-\lambda \Bigg] 
     \\
    &\quad+ c_{j+1,k-1}\Bigg[\frac{\kappa}{2}(j-k+2)\Bigg] = 0
\end{split}
\end{equation}
for each $j,k$.
Note that only those coefficients $c_{j,k}$ with $j+k=\text{const} \equiv N$ are coupled.
Therefore, we define
\begin{equation}
    z^N_j = c_{j,N-j}
\end{equation}
and in this notation the recurrence relation becomes
\begin{equation}\label{eq:tridiagonal}
\begin{split}
    &z^N_{j-1}\Bigg[\frac{\kappa}{2}(N-2(j-1)) \Bigg] 
    \\
    &\quad+ z^N_j\Bigg[-D(j^2+(N-j)^2)+i\left((\omega+\tau)j+\omega(N-j)\right) - \lambda \Bigg] 
    \\
    &\quad+ z^N_{j+1}\Bigg[\frac{\kappa}{2}(2(j+1)-N) \Bigg] = 0
\end{split}
\end{equation}
where $N\in (-\infty, \infty)$ is an integer.
For fixed $N$, \eqref{eq:tridiagonal} is a tridiagonal recurrence relation  corresponding to an infinite dimensional linear system.
Let $J\in \mathbb{Z}^+$ be a cutoff index.
Demanding that $j\in [-J,J]$ results in a truncated, finite-dimensional linear system.
Diagonalizing the truncated system gives an approximation of the eigenvalues of \eqref{eq: appendix 2D ring backwards operator}.
Note that the periodic boundary conditions are automatically satisfied by the choice of basis functions \eqref{eq:ansatz}.
The eigenvectors of the truncated linear system give the coefficients $z_j^N$, which may be translated to coefficients $c_{j,k}$.
Substitution of the computed $c_{j,k}$ into \eqref{eq:ansatz} yields an approximation of the desired eigenfunction.

Alternatively, one may take advantage of the special tridiagonal structure of the truncated linear system to efficiently compute the eigenfunctions using continued fractions.
To compute the eigenfunction corresponding to an (approximated) eigenvalue $\lambda$, rewrite \eqref{eq:tridiagonal}, for fixed $N$, in the general form
    \begin{equation}\label{eq:tri_gen}
    Q_n^- c_{n-1} + Q_n c_n + Q_n^+c_{n+1} = 0
\end{equation}
where the scalar coefficients $Q_n,$ $Q_n^+$, and $Q_n^1$ may be read off directly from the recurrence relation.\footnote{Here we follow the notation of \cite{risken1996fokker}; these coefficients should not be confused with the $Q$-functions themselves.}
Define the ratio $S_n = c_{n+1}/c_n$.
Then, \eqref{eq:tri_gen} may be expressed in terms of $S_n$
\begin{equation}
    S_n = -\frac{Q_{n+1}^-}{Q_{n+1} + Q_{n+1}^+ S_{n+1}}
\end{equation}
which, in turn, may be expressed as an infinite continued fraction
\begin{equation}\label{eq:cf}
    S_n = 
    -\cfrac{Q_{n+1}^-}{Q_{n+1} -  \cfrac{Q_{n+1}^+Q_{n+2}^-}{Q_{n+2} - \frac{Q_{n+2}^+ Q^-_{n+1}}{Q_{n+3}-\dots} }}
\end{equation}
One has $c_n = S_{n-1}S_{n-2}\dots S_0c_0 $; computation of the ratios $S_j$ provides the means to compute the coefficients $c_n$.
Note that one may take $c_0$ as any non-zero constant (a constant multiple of an eigenfunction is still an eigenfunction).
Note that \eqref{eq:cf} is an infinite continued fraction.
For practical applications, one seeks the $M$th approximant of such expressions (an approximation upon truncating the infinite fraction after $M$ terms). 
To solve for the $M$th approximant of a continued fraction
\begin{equation}
    K_M\approx b_0+\cfrac{a_1}{b_1+\cfrac{a_2}{b_2+\dots}}
\end{equation}
one solves
\begin{equation}
    \begin{split}
   A_n&=b_nA_{n-1}+a_nA_{n-2}
   \\
   B_n&=b_nB_{n-1}+a_nB_{n-2}
\end{split}
\end{equation}
with initial conditions $A_{-1}=1, \; A_0=b_0,  \;B_{-1}=0, \; B_0=1$.
The $M$th approximant is given by
\begin{equation}
    K_M = \frac{A_M}{B_M}
\end{equation}

In this manuscript, we compute the eigenfunctions of the 2D ring model \eqref{eq: appendix 2D Kuramoto model system} using continued fractions.
To solve for the $Q$ functions, we set $N=1$ (because we are interested in the leading eigenpair), $J=250$ (which sets the size of the linear system), and $M=50$ (which sets the truncation point for the continued fraction approximation).

We remark that the CF approach closely connects with Koopman theory \cite{brunton2016koopman,brunton2021modern,colbrook2023mpedmd, mezic2022numerical, williams2015data}. 
The double Fourier ansatz \eqref{eq:ansatz} can be viewed as a `dictionary' of functions, under which the finite dimensional nonlinear system \eqref{eq: appendix 2D Kuramoto model system} is represented as an infinite dimensional linear system \eqref{eq:tridiagonal}.
Truncation of the infinite dimensional linear system gives a finite dimensional linear approximation of the original nonlinear dynamics.

\section{Discrete system bifurcation}\label{appendix:discrete system bifurcation}

Here, we analyze the synchronization behavior of the 9D discrete system.
Above the KT point, the imaginary parts of the two dominant eigenvalues are distinct.
Nevertheless, we find that the phases of the coupled oscillators are locked only above the KT point, and thus the bifurcation corresponds intuitively to the notion of synchronization.
To see this, we define the projection matrices
\begin{equation}\label{eq:projection_matrices}
    \begin{split}
        P_A &= \begin{bmatrix}
            I_{3\times 3} & I_{3\times 3} & I_{3\times 3}
        \end{bmatrix}, \quad P_B = I_{3\times 3} \otimes \begin{bmatrix}
            1 & 1 & 1
        \end{bmatrix}
    \end{split}
\end{equation}
where $\otimes$ denotes the Kronecker product.
The two 9D eigenvectors of the jointly coupled system \eqref{eq:discrete_coupled_system} corresponding to the two dominant backward eigenmodes are projected via their respective projection matrix \eqref{eq:projection_matrices} to two 3D eigenvectors corresponding to each individual oscillator.
We plot the complex argument of these vectors as a function of coupling strength in Figure \ref{fig:phase_dif}, and find that the stochastic asymptotic phase of each oscillator becomes locked after the bifurcation, i.e., while the two oscillators do not have identical frequencies in $Q$-function coordinates, the structure of the isochrons of each oscillator are the same above the bifurcation.

\begin{figure}[ht]
    \centering
    \includegraphics[scale=.4]{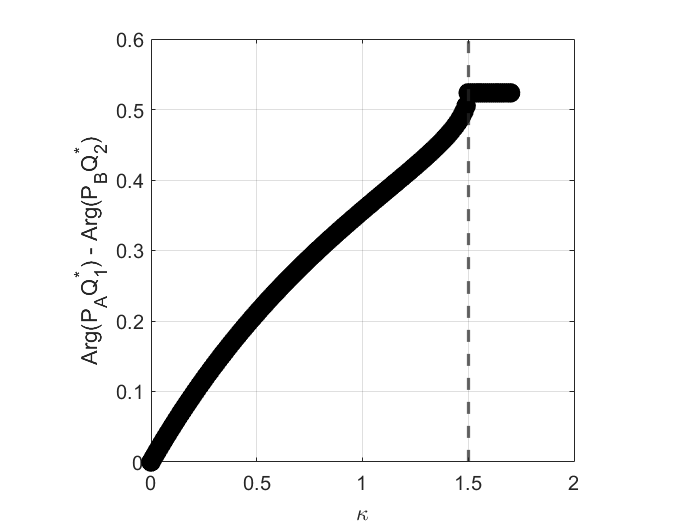}
    \caption{Difference of the complex argument of the projected $Q$-functions of the 9D discrete-state system.
    The difference becomes constant after the KT point (dotted line).}
    \label{fig:phase_dif}
\end{figure}

\section{Linear system eigenfunctions}\label{appendix: linear system eigenfunctions}

Here, we prove a lemma giving the form of the low-lying backward eigenfunctions of a general linear system of the form
\begin{equation}\label{eq: appendix linear system}
    \mathbf{X}' = \mathcal{A}\mathbf{X} + \mathcal{D}\vec{\xi}(t)
\end{equation}
where $\mbX \in \mathbb{R^N}$, $\mathcal{A}$ is an $N\times N$ drift matrix, $\mathcal{D}$ is a $N \times M$ noise matrix, and $\vec{\xi}$ is an $M \times 1$ white noise vector.
The backward operator for \eqref{eq: appendix linear system} is given by 
\begin{equation}\label{eq: appendix linear backwards operator}
    \mathscr{L}^\dagger [Q^*] = \sum_{i} (\mathcal{A}\mbx)_i [\partial_{x_i} Q^*] + \frac{1}{2}\sum_{ij} \mathcal{B}_{ij} [\partial_{x_i x_j} Q^*]
\end{equation}
where $\mathcal{B} = \mathcal{D}\mathcal{D}^T$.
The spectral properties of the backward operator \eqref{eq: appendix linear backwards operator} have been thoroughly studied (see for instance \cite{leen2016eigenfunctions, metafune2002spectrum, zhang2021computing}).
Here, we derive the form of the first non-trivial eigenfunctions of \eqref{eq: appendix linear backwards operator}.
In the case when \eqref{eq: appendix linear system} corresponds to a system of robustly oscillatory oscillators, our computation provides a simple expression for the corresponding $Q$ functions.

\begin{lemma}
    Assume that $\mathcal{A}$ is diagonalizable with a complete set of left and right eigenvectors.
Then, the first non-trivial eigenfunctions of \eqref{eq: appendix linear backwards operator} are given by
\begin{equation}\label{eq: appendix Q function ansatz}
    Q_i^*(\mbx) = \mathbf{w}_i^*\mbx
\end{equation}
with eigenvalue $\lambda_i$, where $\mathbf{w}_i^* \mathcal{A} = \lambda_i \mathbf{w}_i^*$, i.e., $\mathbf{w}_i^*$ is a left eigenvector of $\mathcal{A}$ with eigenvalue $\lambda_i$.
\end{lemma}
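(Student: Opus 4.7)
The plan is to verify the claim by direct substitution of the ansatz into the backward operator and exploit the fact that the ansatz is a linear function of the coordinates. Write $Q^*(\mbx) = \mathbf{w}^* \mbx = \sum_k w^*_k x_k$, so that $\partial_{x_i} Q^* = w^*_i$ and $\partial_{x_i x_j} Q^* = 0$ for all $i,j$. Substituting into \eqref{eq: appendix linear backwards operator} causes the entire diffusion term $\tfrac{1}{2}\sum_{ij} \mathcal{B}_{ij} \partial_{x_i x_j} Q^*$ to vanish, and leaves only the drift contribution
\begin{equation*}
    \mathscr{L}^\dagger[Q^*] = \sum_i (\mathcal{A}\mbx)_i \, w^*_i = \mathbf{w}^* \mathcal{A} \mbx.
\end{equation*}

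The next step is to impose the left eigenvector condition. If $\mathbf{w}_i^*$ satisfies $\mathbf{w}_i^* \mathcal{A} = \lambda_i \mathbf{w}_i^*$, then the expression above reduces to $\lambda_i \mathbf{w}_i^* \mbx = \lambda_i Q_i^*(\mbx)$, establishing that $Q_i^*(\mbx) = \mathbf{w}_i^* \mbx$ is an eigenfunction with eigenvalue $\lambda_i$. By the diagonalizability assumption on $\mathcal{A}$, there are exactly $N$ linearly independent left eigenvectors, producing $N$ linearly independent linear eigenfunctions of $\mathscr{L}^\dagger$.

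To argue that these are the \emph{first non-trivial} eigenfunctions, I would appeal to the well-known spectral structure of the Ornstein--Uhlenbeck generator (see, e.g., \cite{metafune2002spectrum}): the full spectrum of $\mathscr{L}^\dagger$ is given by nonnegative-integer linear combinations $\sum_i n_i \lambda_i$ of the drift eigenvalues, with associated eigenfunctions consisting of products of generalized Hermite polynomials in the coordinates $\mbx$. The trivial eigenfunction $Q_0^* = 1$ corresponds to $\sum_i n_i = 0$, and the linear eigenfunctions we have constructed correspond to choices in which a single $n_i = 1$ and all others vanish. Provided $\mathcal{A}$ is stable, these are precisely the eigenvalues with the smallest nonzero real part in magnitude, and in the robustly oscillatory case the pair with the largest imaginary part yields the $Q$-function.

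The main (and really only) subtle point is the vanishing of the diffusion term: this is what makes the backward eigenfunctions independent of $\mathcal{B} = \mathcal{D}\mathcal{D}^T$, even though the corresponding forward (stationary) eigenfunctions depend on $\mathcal{B}$ through the Gaussian weight. No delicate estimation is required beyond this observation; the proof is essentially a one-line substitution together with a citation for the spectrum ordering to justify the phrase \emph{first non-trivial}.
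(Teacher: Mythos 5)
Your proof is essentially the same direct substitution the paper uses: insert the linear ansatz, observe that the diffusion term vanishes because a linear function has no second derivatives, and the drift term reproduces $\mathbf{w}_i^* \mathcal{A}\mbx = \lambda_i Q_i^*(\mbx)$. In fact you are slightly more complete than the paper, which verifies the eigenfunction identity but never actually argues the ``first non-trivial'' ordering claim; your appeal to the known lattice structure of the Ornstein--Uhlenbeck spectrum fills that gap.
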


\begin{proof}
    Let $\mathbf{w}_i^* = [w^{(1)}, \; w^{(2)}, \; \dots \; w^{(N)}]$ denote a left eigenvector of $\mathcal{A}$ with eigenvalue $\lambda_i$.
    Substituting the ansatz \eqref{eq: appendix Q function ansatz} into \eqref{eq: appendix linear backwards operator} gives
    \begin{equation}
        \begin{split}
            \mathscr{L}^\dagger[Q^*] &= 
            \sum_{i} (\mathcal{A}\mbx)_i [\partial_{x_i} Q^*] + \frac{1}{2}\sum_{ij} \mathcal{B}_{ij} [\partial_{x_i x_j} Q^*]
            \\
            &= 
            (\mathcal{A}\mbX)_1 [\partial_{x_1} \mathbf{w}_i^*\mathbf{x}] + (\mathcal{A}\mbX)_2 [\partial_{x_2} \mathbf{w}_i^*\mathbf{x}] + 
            \\
            &\quad\dots + (\mathcal{A}\mbX)_{N} [\partial_{x_{N}} \mathbf{w}_i^*\mathbf{x}]
            \\
            &= \begin{bmatrix}
                w^{(1)} & w^{(2)} & \dots & w^{(N)}
            \end{bmatrix}\begin{bmatrix}
                (\mathcal{A}\mbx)_1
                \\
                (\mathcal{A}\mbx)_2
                \\
                \vdots
                \\
                (\mathcal{A}\mbx)_{N}
            \end{bmatrix}
            \\
            &= \mathbf{w}_i^*\mathcal{A}\mbx
            \\
            &= \lambda_i \mathbf{w}_i^* \mbx
            \\
            &= \lambda_i Q^*
        \end{split}
    \end{equation}
\end{proof}

%\nocite{*}
\bibliographystyle{plain}
%\bibliography{lib}
%merlin.mbs aipnum4-1.bst 2010-07-25 4.21a (PWD, AO, DPC) hacked
%Control: key (0)
%Control: author (8) initials jnrlst
%Control: editor formatted (1) identically to author
%Control: production of article title (0) allowed
%Control: page (1) range
%Control: year (1) truncated
%Control: production of eprint (0) enabled
%

\end{document}